\let\saveendproof=\endproof
\def\endproof{\qed\saveendproof}
\newtheorem{openquestion}[theorem]{Open question}
\newcolumntype{C}{>{$}c<{$}}  
\newcolumntype{L}{>{$}l<{$}}  
\definecolor{mediumspringgreen}{rgb}{0.0, 0.98039215, 0.60392156}
\def\visible<#1>{}  
\DeclareMathOperator    \conv           {conv}
\DeclareMathOperator    \intr                   {int}
\DeclareMathOperator    \relint         {rel\,int}
\DeclareMathOperator    \verts          {vert}
\providecommand\compactop[1]{\kern0.2pt{#1}\kern0.2pt\relax}
\newcommand{\bb}{\mathbb}
\newcommand{\R}{\bb R}
\newcommand{\Q}{\bb Q}
\newcommand{\Z}{\bb Z}
\newcommand\st{\mid}
\newcommand\bigst{\mathrel{\big|}}
\newcommand\Bigst{\mathrel{\Big|}}
\renewcommand{\P}{\mathcal{P}}
\def\st{\mid}
\newenvironment{psmallmatrixbig}{\bigl(\smallmatrix}{\endsmallmatrix\bigr)}
\newcommand\InlineFrac[2]{#1/#2}  
\newcommand\ColVec[3][\relax]
\let\frac=\InlineFrac\begin{psmallmatrixbig}#2\vphantom{/}\\#3\vphantom{/}\end{psmallmatrixbig}\egroup
\let\frac=\InlineFrac\begin{psmallmatrixbig}\ifx#200\else#2/#1\fi\\\ifx#300\else#3/#1\fi\end{psmallmatrixbig}\egroup
\colorlet{validcolor}{ForestGreen} 
\colorlet{minimalcolor}{yellow}  
\colorlet{extremecolor}{red} 
\colorlet{facetcolor}{blue}  
\colorlet{weakfacetcolor}{orange} 
\colorlet{additivecolor}{MediumSpringGreen}
\def\valid/{\textcolor{validcolor}{valid}}
\def\minimal/{\textcolor{minimalcolor}{minimal}}
\def\extreme/{\textcolor{extremecolor}{extreme}}
\def\Ptight(#1){\textcolor{tightsolutionscolor}{P}(#1)}
\def\Eadditive(#1){\colorbox{additivecolor}{$E$}(#1)}
\newcommand\pilifted{\hat{\pi}}  
\renewcommand{\pod}[1]
{\allowbreak\mathchoice{\mkern18mu}{\mkern8mu}{\mkern8mu}{\mkern8mu}(#1)}
\DeclareRobustCommand\sage[1]{\texttt{#1}}
\let\Myunderscore=\textunderscore   
  \def\Myunderscore{\textunderscore}%
\newcommand\underscore{\Myunderscore\allowbreak}
\DeclareRobustCommand\sage[1]{\textsf{\upshape #1}}
\DeclareRobustCommand\sagefunc[1]{\pgfkeys{/sagefunc/#1}}
\providecommand\tightslack[1]{#1}   
\newcommand\DIFFPROTECT[1]{#1}
\titlerunning{Facets, weak facets, and extreme functions}
\title{Facets, weak facets, and extreme functions\\
  of the Gomory--Johnson infinite group problem%
  \thanks{The authors gratefully acknowledge partial support from the National Science
  Foundation through grant DMS-1320051, awarded to M.~K\"oppe.
  A preliminary version appeared in Chapter 6 of the second author's Ph.D. thesis
  \emph{Infinite-dimensional relaxations of mixed-integer optimization 
  problems}, University of California, Davis, Graduate Group in
  Applied Mathematics, May 2017, available from
  \url{https://search.proquest.com/docview/1950269648}.
  An extended abstract appeared in:
  M.~K{\"o}ppe and Y.~Zhou, \emph{On the notions of facets, weak facets,
  and extreme functions of the {G}omory--{J}ohnson infinite group problem},
  Integer Programming and Combinatorial Optimization: 19th International
  Conference, IPCO 2017, Waterloo, ON, Canada, June 26--28, 2017, Proceedings
  (Friedrich Eisenbrand and Jochen Koenemann, eds.), Springer International
  Publishing, Cham, 2017, pp.~330--342, \url{https://doi.org/10.1007/978-3-319-59250-3_27}, ISBN
  {978-3-319-59250-3}.
}}
\author{Matthias K\"oppe \and
  Yuan Zhou}
\institute{Dept.\ of Mathematics, University of California, Davis\\
  \texttt{mkoeppe@math.ucdavis.edu} 
  \and
  Dept.\ of Mathematics, University of Kentucky\\
  \texttt{yuan.zhou@uky.edu}}
\date{$\relax$Revision: 3352 $ - \ $Date: 2019-11-14 10:09:31 -0500 (Thu, 14 Nov 2019) $ $\!\!\!}
\begin{document}
 \newcommand{\tgreen}[1]{\textsf{\textcolor {ForestGreen} {#1}}}
 \newcommand{\tred}[1]{\texttt{\textcolor {red} {#1}}}
 \newcommand{\tblue}[1]{\textcolor {blue} {#1}}

\maketitle

\begin{abstract}
  We investigate three competing notions that generalize the notion of a facet
  of finite-dimensional polyhedra to the infinite-dimension\-al Gomory--Johnson
  model.  These notions were known to coincide for continuous piecewise linear
  functions with rational breakpoints.  We show that two of the notions, extreme functions and
  facets, coincide for the case of continuous piecewise linear functions,
  removing the hypothesis regarding rational breakpoints.
  We prove an if-and-only-if version of the Gomory--Johnson Facet Theorem.
  Finally, we separate the three notions using discontinuous examples.
\end{abstract}

\section{Introduction}

\subsection{Facets in the finite-dimensional case}
Let $G$ be a finite index set.  The space~$\R^{(G)}$ of real-valued functions
$y\colon G\to\R$ is isomorphic to and routinely identified with the Euclidean
space $\R^{|G|}$.  Let $\R^{G}$ denote its dual space.  It is the space of
functions $\alpha\colon G \to \R$, which we consider as linear functionals on
$\R^{(G)}$ via the pairing $\langle \alpha, y \rangle = \sum_{r\in G} \alpha(r)
y(r)$.  Again it is routinely identified with the Euclidean space $\R^{|G|}$, 
and the dual pairing $\langle \alpha, y\rangle $ is the Euclidean inner product.
A~(closed, convex) rational polyhedron 
of~$\R^{(G)}$ is the set of $y\colon
G\to\R$ satisfying $\langle\alpha_i, y\rangle \geq \alpha_{i, 0}$, where
$\alpha_i\in \Z^G$ are integer linear functionals and $\alpha_{i,
  0}\in\Z$, for $i$ ranging over another finite index set~$I$. 
We refer to \cite{sch,ccz-ipbook} for the standard notions of polyhedral geometry.

Consider an integer linear optimization problem in $\R^{(G)}$, i.e.,
the problem of minimizing a linear functional $\eta \in \R^G$ over 
a feasible set $F 
\subseteq \{\, y\colon G \to \Z_+\,\}
\subset \R_+^{(G)}$
, or, equivalently, 
over the convex hull $R = \conv F \subset \R_+^{(G)}$.  
A \emph{valid inequality} for $R$ is an inequality of the form
$\langle \pi, y \rangle \geq \pi_0$, where $\pi \in \R^G$, which holds for
all $y \in R$ (equivalently, for all $y \in F$).  If $R$ is closed, it is exactly the set of all $y$ that satisfy all valid inequalities. 
In the following we will restrict ourselves to the case that $R \subseteq
\R_+^{(G)}$ is a polyhedron of blocking type \cite[section~9.2]{sch}, 
i.e., a polyhedron in $\R_+^{(G)}$ whose recession cone is the positive
orthant.  Then it suffices to consider normalized valid inequalities $\langle
\pi, y \rangle \geq \pi_0$ with $\pi\geq0$ and $\pi_0=1$,
together with the trivial inequalities $y(r) \geq 0$.

Let $P(\pi)$ denote the set of functions $y \in F$ for which the inequality
$\langle \pi, y \rangle \geq 1$ is tight, i.e.,
$\langle \pi, y \rangle = 1$.  If $P(\pi) \neq \emptyset$, then
$\langle \pi, y \rangle \geq 1$ is a \emph{tight valid inequality}.  Then
$R$ is exactly the set of all $y\geq0$ that satisfy all tight valid inequalities.
A valid inequality $\langle \pi, y \rangle \geq 1$ is called
\emph{minimal} 
if there is no other valid inequality $\langle \pi', y \rangle \geq 1$ where $\pi' \neq \pi$ such that $\pi' \leq \pi$ pointwise.
One can show that a minimal valid inequality is tight.
A valid inequality $\langle \pi, y \rangle \geq 1$ is called
\emph{facet-defining} if 
\begin{equation}\tag{wF}
  \begin{aligned}
    \text{for every valid inequality $\langle \pi', y \rangle \geq 1$
      such that $P(\pi)\subseteq P(\pi')$},\\
    \text{we have $P(\pi)=P(\pi')$,}
  \end{aligned}
  \label{eq:facet-definition-like-weak-facet}
\end{equation}
or, in other words, if the face induced by $\langle \pi, y \rangle \geq 1$
is maximal 
\DIFFPROTECT{\cite[section~8.4]{sch}.} 
Because $R$ is of blocking type, 
it has full affine dimension \cite[section~9.2]{sch}.  
\DIFFPROTECT{Hence, there is a unique minimal representation of~$R$ by
  a finite system of linear inequalities
  (up to reordering them and multiplying them by positive real numbers) 
  which are in bijection with the facets \cite[section~8.4]{sch}.
  Because of our normalization, this implies the 
  following two equivalent characterizations of facet-defining inequalities of the form $\langle
\pi', y \rangle \geq 1$:}
\begin{equation}\tag{F}
  \begin{aligned}
    \text{for every valid inequality $\langle \pi', y \rangle \geq 1$
      such that $P(\pi)\subseteq P(\pi')$},\\
    \text{we have $\pi = \pi'$,}
  \end{aligned}
  \label{eq:facet-definition-like-facet}
\end{equation}
and
\begin{equation}\tag{E}
   \begin{aligned}
     \text{if $\langle \pi^1, y \rangle \geq 1$ and $\langle \pi^2, y \rangle
       \geq 1$ are valid inequalities, and $\pi = \tfrac12 (\pi^1+\pi^2)$}\\
     \text{then $\pi = \pi^1 = \pi^2$}. 
     \label{eq:facet-definition-like-extreme-function}
   \end{aligned}
\end{equation}




\subsection{Facets in the infinite-dimensional Gomory--Johnson model}



It is perhaps not surprising that the three conditions
\eqref{eq:facet-definition-like-weak-facet},
\eqref{eq:facet-definition-like-facet}, and
\eqref{eq:facet-definition-like-extreme-function} are no longer equivalent
when $R$ is a general convex set that is not polyhedral, and in particular when we change from the
finite-dimensional to the infinite-dimensional setting.
In the present paper, however, we consider a particular case of an
infinite-dimensional model, in which this question has eluded researchers for
a long time. Let $G=\Q$ or $G=\R$ and let $\R^{(G)}$ now denote the space of
finite-support functions $y\colon G\to\R$. 
The so-called \emph{infinite group problem} was introduced
by Gomory and Johnson in their seminal papers \cite{infinite,infinite2}. 
Let $F = F_{f}(G,\Z)\subseteq \R_+^{(G)}$ be the set of all finite-support
functions $y\colon G\to\Z_+ $ satisfying the equation 
\begin{equation}
  \label{GP} 
  \sum_{r \in G} r\, y(r) \equiv f \pmod{1}
\end{equation}
where $f$ is a given element of $G\setminus \Z$. 
We study its convex hull $R = R_{f}(G,\Z) \subseteq \R_+^{(G)}$, 
consisting of the functions $y\colon G \to \R_+$ 
that can be written as (finite) convex combinations of elements of~$F$, 
and which are therefore finite-support functions as well.

Valid inequalities for $R$ are of the form
$\langle \pi, y \rangle \geq \pi_0$, where $\pi$ comes from the dual space
$\R^G$, which is the space of all real-valued functions (without the
finite-support condition).  When $G=\Q$, then $R$ is again of ``blocking
type'' (see, for example, \cite[section 5]{corner_survey}), and so we again
may assume $\pi\geq0$ and $\pi_0=1$.

If $G=\R$ (the setting of the present paper), typical pathologies from the
analysis of functions of a real 
variable come into play.  By \cite[Proposition 2.4]{igp_survey},
there is an infinite-dimensional subspace $\Pi^* \subset \R^G$ of functions
$\pi^*$ such that the equations $\langle\pi^*,
y\rangle=0$ are valid for~$R$.
The functions~$\pi^*$ are constructed using a Hamel basis of~$\R$
over~$\Q$, and each $\pi^*\in\Pi^*$, $\pi^*\neq0$ has a graph whose topological
closure is~$\R^2$.  Recently, Basu et al.~\cite[Theorem
3.5]{basu2016structure} showed that for every valid inequality $\langle \pi, y
\rangle \geq \pi_0$ there exists
a valid inequality $\langle \pi', y \rangle \geq \pi_0$
with $\pi'\geq0$ such that $\pi' - \pi \in \Pi^*$.  Thus, ignoring trivial
inequalities with $\pi_0 \leq 0$, we may once again assume $\pi\geq0$ and
normalize to $\pi_0=1$.  We call such functions $\pi$ \emph{valid functions}.
In contrast to Gomory and Johnson \cite{infinite,infinite2}, who only considered continuous
functions~$\pi$, this class of functions contains many interesting
discontinuous functions such as
the Gomory fractional cut.

(Minimal) valid functions~$\pi$ that satisfy the conditions
\eqref{eq:facet-definition-like-weak-facet},
\eqref{eq:facet-definition-like-facet}, and
\eqref{eq:facet-definition-like-extreme-function}, are called \emph{weak
  facets}, \emph{facets}, and \emph{extreme functions}, respectively. 
The relation of these notions, in particular of facets and extreme functions, 
has remained unclear in the literature.  For example, Basu et
al. \cite{bccz08222222}, responding to a claim by Gomory and Johnson in
\cite{tspace}, wrote:  
\begin{quote}
  The statement that extreme functions are facets appears to be quite
  nontrivial to prove, and to the best of our knowledge there is no proof in
  the literature. We therefore cautiously treat extreme functions and facets
  as distinct concepts, and leave their equivalence as an open question.
\end{quote}

The survey \cite[section 2.2]{igp_survey} summarizes what was known
about the relation of the three notions:  Facets form a subset of the
intersection of extreme functions and weak facets; see \autoref{fig:separation}.  
For the family $\mathcal F_1$ of continuous piecewise linear functions with 
  rational breakpoints, \cite[Proposition 2.8]{igp_survey} and
  \cite[Theorem~8.6]{igp_survey_part_2} proved that (E)
  $\Leftrightarrow$ (F). Moreover, in this case, (wF)
  $\Rightarrow$ (F) can be shown easily as another consequence of
  \cite[Theorem~8.6]{igp_survey_part_2}
  . 
Thus (E), (F), (wF) are equivalent when $\pi$ is a
continuous piecewise linear function with rational breakpoints. 


\subsection{Contribution of this paper}

A well known sufficient condition for facetness of a minimal valid function~$\pi$ is
the Gomory--Johnson Facet Theorem.  In its strong form, due to
Basu--Hildebrand--K\"{o}ppe--Molinaro \cite{basu-hildebrand-koeppe-molinaro:k+1-slope}, it reads:
\begin{theorem}[{Facet Theorem, strong form, \cite[Lemma
34]{basu-hildebrand-koeppe-molinaro:k+1-slope}; see also \cite[Theorem
2.12]{igp_survey}}]
\label{thm:facet-theorem-strong-form}
  Suppose for every minimal valid function $\pi'$, $E(\pi) \subseteq E(\pi')$
  implies $\pi' = \pi$.  Then $\pi$ is a facet.
\end{theorem}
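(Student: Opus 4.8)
The plan is to verify the facet condition~\eqref{eq:facet-definition-like-facet} directly: given any valid function $\pi'$ with $P(\pi)\subseteq P(\pi')$, I will show $\pi=\pi'$, the only nontrivial input being the hypothesis of the theorem. The first step is to reduce to a minimal dominated function. It is standard that every valid function dominates a minimal valid function, so pick a minimal $\bar\pi\le\pi'$ pointwise. For $y\in P(\pi)\subseteq P(\pi')$ we have $\langle\bar\pi,y\rangle\le\langle\pi',y\rangle=1$ from $\bar\pi\le\pi'$ and $y\ge0$, while validity of $\bar\pi$ forces $\langle\bar\pi,y\rangle\ge1$; hence $\langle\bar\pi,y\rangle=1$, so that $P(\pi)\subseteq P(\bar\pi)$.

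The heart of the argument is the passage from $P$ to $E$, supplied by a family of elementary feasible solutions. For real $u,v$ let $y_{u,v}$ be the finite-support integer solution placing one unit of mass on each of $u$, $v$, and $f-u-v$ (merging coincident points and discarding the value $0$ as needed); it is feasible because $u+v+(f-u-v)\equiv f\pmod{1}$. For any minimal valid function $\rho$, subadditivity together with the symmetry relation $\rho(u+v)+\rho(f-u-v)=1$ gives $\langle\rho,y_{u,v}\rangle=\rho(u)+\rho(v)+\rho(f-u-v)\ge1$, with equality precisely when $\rho(u)+\rho(v)=\rho(u+v)$, i.e. when $(u,v)\in E(\rho)$. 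Thus $y_{u,v}\in P(\rho)$ if and only if $(u,v)\in E(\rho)$. Applying this to $\pi$ and $\bar\pi$: each $(u,v)\in E(\pi)$ yields $y_{u,v}\in P(\pi)\subseteq P(\bar\pi)$, whence $(u,v)\in E(\bar\pi)$, so $E(\pi)\subseteq E(\bar\pi)$. Since $\bar\pi$ is minimal, the hypothesis now forces $\bar\pi=\pi$, and in particular $\pi\le\pi'$.

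It remains to upgrade $\pi\le\pi'$ to $\pi=\pi'$. Specializing to $v=f-u$, the solution $y_{u,f-u}$ is supported on $\{u,f-u\}$ and lies in $P(\pi)$ for every $u$, by the symmetry of the minimal function $\pi$. From $P(\pi)\subseteq P(\pi')$ we obtain $0=\langle\pi'-\pi,\,y_{u,f-u}\rangle=(\pi'-\pi)(u)+(\pi'-\pi)(f-u)$; both summands are nonnegative because $\pi\le\pi'$, so each vanishes and $\pi'(u)=\pi(u)$. Letting $u$ range over $G$ gives $\pi=\pi'$, which is exactly~\eqref{eq:facet-definition-like-facet}.

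The main obstacle is making the $P$--$E$ correspondence of the second paragraph airtight: one must confirm feasibility and tightness of $y_{u,v}$ in every degenerate configuration (coincidences among $u$, $v$, $f-u-v$ and the appearance of the point $0$) and, decisively, invoke the symmetry condition of minimal valid functions to convert the three-term equality $\rho(u)+\rho(v)+\rho(f-u-v)=1$ into genuine additivity $(u,v)\in E(\rho)$. This is precisely why the reduction to a minimal $\bar\pi$ must precede the use of the hypothesis. A secondary point is to read $E$ as the additivity domain matched by these elementary solutions; should $E(\pi)$ be defined through limiting additivities in the discontinuous case, the same scheme should go through using the corresponding limiting families of tight solutions.
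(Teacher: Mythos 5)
Your argument is correct, and it is essentially the standard proof of this result. The paper itself does not prove \autoref{thm:facet-theorem-strong-form} --- it imports it from \cite[Lemma 34]{basu-hildebrand-koeppe-molinaro:k+1-slope} --- so there is no in-paper proof to match against; but each of your three steps coincides with machinery the paper does develop nearby. Your reduction to a dominated minimal function $\bar\pi\le\pi'$ with $P(\pi)\subseteq P(\pi')\subseteq P(\bar\pi)$ is exactly the device used in the proof of \autoref{lemma:weak_facet_minimal}\,(1)--(2). Your three-point solutions $y_{u,v}$ converting $P(\pi)\subseteq P(\bar\pi)$ into $E(\pi)\subseteq E(\bar\pi)$ constitute precisely the ``if'' direction of \autoref{lemma:P_pi_and_E_pi}, which the paper cites to \cite[Theorem 20]{basu-hildebrand-koeppe-molinaro:k+1-slope} rather than reproving; you supply that proof, correctly using subadditivity plus the symmetry of the \emph{minimal} function $\rho$ to turn tightness of $\rho(u)+\rho(v)+\rho(f-u-v)=1$ into $(u,v)\in E(\rho)$. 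The final two-point symmetry solutions $y_{u,f-u}$ upgrading $\pi\le\pi'$ to $\pi=\pi'$ are the same trick as in the proof of \autoref{lemma:weak_facet_minimal}\,(1). Two small points worth making explicit: the argument needs the standing assumption that $\pi$ itself is minimal valid (otherwise the symmetry condition used in your second and third paragraphs is unavailable), which is the context in which the theorem is stated; and your closing remark about limiting additivities is moot here, since $E(\pi)$ in this theorem is the plain additivity domain with no limits, as the paper itself emphasizes in the remark following \autoref{r:discont-lit}.
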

(Here $E(\pi)$ is the \emph{additivity domain} of $\pi$, defined in
\autoref{sec:notations}.) 
We show (\autoref{lemma:facet_theorem} below) that, in fact, \textbf{this holds as an
``if and only if'' statement.} 
The technique of the proof of this converse is not surprising, but the result
is crucial for the remainder of the paper, and it closes a gap in the literature.

As we mentioned above, for the family $\mathcal F_1$ of continuous piecewise linear functions with rational 
breakpoints, Basu et al. \cite[Proposition 2.8]{igp_survey} showed that the
notions of extreme functions and facets coincide.  This was a consequence of
Basu et al.'s finite oversampling theorem, which connects the extremality of a function $\pi\in\mathcal F_1$ to the extremality of its restriction in a finite group problem \cite{basu-hildebrand-koeppe:equivariant}. 
We \textbf{sharpen this result by removing the hypothesis regarding rational
breakpoints.}

\begin{theorem} 
  \label{lemma:cont_pwl_extreme_is_facet}
  Let $\mathcal F_4$ be the family of continuous piecewise linear functions (not necessarily with rational breakpoints).
  Then 
  $$ \{\, \pi\in \mathcal F_4 : \text{$\pi$ is extreme} \,\}
  =  \{\, \pi\in \mathcal F_4 : \text{$\pi$ is a facet} \,\}.$$
\end{theorem}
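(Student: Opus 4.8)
The inclusion $\{\,\pi\in\mathcal F_4 : \pi \text{ is a facet}\,\} \subseteq \{\,\pi\in\mathcal F_4 : \pi\text{ is extreme}\,\}$ is the known general fact that facets are always extreme (recall that facets lie in the intersection of extreme functions and weak facets). So the whole content is the reverse inclusion, that every extreme $\pi\in\mathcal F_4$ is a facet. The plan is to prove this via the if-and-only-if form of the Facet Theorem (\autoref{lemma:facet_theorem}): it suffices to show that for every minimal valid function $\pi'$ with $E(\pi)\subseteq E(\pi')$ one has $\pi'=\pi$.

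So fix such a $\pi'$ and set $\psi = \pi'-\pi$. Both $\pi,\pi'$ are minimal, so $\psi(0)=0$ and $\psi(x)+\psi(f-x)=0$; and since $E(\pi)\subseteq E(\pi')$, the function $\psi$ is additive wherever $\pi$ is, i.e.\ $\Delta\psi(x,y):=\psi(x)+\psi(y)-\psi(x+y)=0$ for all $(x,y)\in E(\pi)$. The idea is to exhibit $\pi$ as the midpoint of two valid functions built from $\psi$. Concretely, for small $\epsilon>0$ put
\[
  \pi^+ = (1-\epsilon)\pi + \epsilon\pi' = \pi+\epsilon\psi, \qquad
  \pi^- = (1+\epsilon)\pi - \epsilon\pi' = \pi-\epsilon\psi,
\]
so that $\pi=\tfrac12(\pi^++\pi^-)$. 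Here $\pi^+$ is a convex combination of the minimal valid functions $\pi,\pi'$, hence is itself minimal valid. If I can show that $\pi^-$ is also minimal valid for some $\epsilon>0$, then extremality of $\pi$ forces $\pi^+=\pi^-=\pi$, whence $\psi\equiv0$ and $\pi'=\pi$, as desired.

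Everything therefore reduces to the validity (equivalently, subadditivity) of $\pi^-$. Its symmetry and the normalizations $\pi^-(0)=0$, $\pi^-(f)=1$ are immediate from the display above, so the only real issue is $\Delta\pi^- = (1+\epsilon)\Delta\pi - \epsilon\Delta\pi' \ge 0$ everywhere. On $E(\pi)$ both $\Delta\pi$ and $\Delta\pi'$ vanish, so $\Delta\pi^-=0$ there; off $E(\pi)$ we have $\Delta\pi>0$, but since $\Delta\pi$ tends to $0$ along the relative boundary of $E(\pi)$ no uniform positive lower bound is available, and we must instead control $\Delta\pi'$ (equivalently $\Delta\psi$) near that boundary. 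This is where the hypotheses that $\pi$ is continuous and piecewise linear are essential, and it is the main obstacle. The plan is to invoke the Interval Lemma: because $\psi$ is bounded (minimal valid functions take values in $[0,1]$) and additive over $E(\pi)$, and because for an extreme continuous piecewise linear $\pi$ the two-dimensional faces of $E(\pi)$ cover all the intervals of the breakpoint complex of $\pi$ (directly, or through the chains of translations and reflections that the additivity relations generate), the Interval Lemma forces $\psi$ to be affine on each such interval and continuous across breakpoints. That is, $\psi$—and hence $\pi'$—is continuous and piecewise linear on the same finite complex as $\pi$.

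Once $\psi$ is piecewise linear on $\pi$'s complex, $\Delta\pi^-$ is piecewise linear on the associated two-dimensional complex, vanishes identically on $E(\pi)$, and is affine on each two-cell, so its minimum over a cell is attained at a vertex. At vertices lying in $E(\pi)$ it equals $0$; at the finitely many vertices not in $E(\pi)$ we have $\Delta\pi>0$ while $\Delta\psi$ is finite, so $\Delta\pi^-=\Delta\pi-\epsilon\Delta\psi>0$ for all sufficiently small $\epsilon$. Taking $\epsilon$ below the finitely many resulting thresholds makes $\Delta\pi^-\ge0$ on every cell; since we have already verified $\pi^-(0)=0$ and the symmetry condition, $\pi^-$ is then minimal valid (minimality being characterized by subadditivity together with these conditions), and the argument concludes. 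I expect the delicate point to be the reduction in the previous paragraph: establishing, with \emph{no} rationality assumption on the breakpoints, that the real additivity relations in $E(\pi)$ cover the whole complex and so pin $\pi'$ down to a piecewise linear function. This is precisely the step that must replace the finite-group oversampling argument available only in the rational-breakpoint case.
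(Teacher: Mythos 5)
Your overall skeleton matches the paper's: by \autoref{lemma:facet_theorem} it suffices to show $\pi'=\pi$ for every minimal valid $\pi'$ with $E(\pi)\subseteq E(\pi')$, and one does this by writing $\pi=\tfrac12(\pi^++\pi^-)$ with $\pi^\pm=\pi\pm\epsilon(\pi'-\pi)$ and invoking extremality. The genuine gap is exactly the step you flag as delicate: the claim that for an extreme $\pi\in\mathcal F_4$ the additivity relations force $\psi=\pi'-\pi$ to be continuous piecewise linear over $\pi$'s complex. First, the sub-claim that extremality forces all intervals to be covered (directly or through chains of moves), and that this coverage pins $\psi$ down to a piecewise linear function, is only available when the moves closure is finitely presented; for irrational breakpoints this need not hold, and whether it does for all of $\mathcal F_4$ is precisely the open question recorded after \autoref{thm:implications-of-pwl-effective-perturbation} (the gap between $\mathcal F_3$ and $\mathcal F_4$). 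Second, piecewise linearity of $\psi$ certainly cannot follow from $E(\pi)\subseteq E(\pi')$ alone: for functions in $\mathcal F_2$ and $\mathcal F_3$ the perturbation space is a direct sum involving spaces of non--piecewise-linear Lipschitz functions, so you would have to use extremality in an essential way for which no argument currently exists. Your concluding vertex-enumeration step therefore rests on an unproved premise, and the proof does not close.

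The paper circumvents this entirely, and the fix is worth absorbing: it applies the Interval Lemma only on a small square at the origin, where $\Delta\pi\equiv0$ automatically because $\pi$ is piecewise linear with $\pi(0)=0$, to conclude that $\pi'$ is affine on $[0,\delta]$ and $[-\delta,0]$; subadditivity then upgrades this to global Lipschitz continuity of $\pi'$, hence of $\bar\pi=\pi'-\pi$. The uniform-lower-bound problem you correctly identify ($\Delta\pi\to0$ at the relative boundary of $E(\pi)$) is resolved not by piecewise linearity of $\bar\pi$ but by the geometric estimate of \autoref{lemma:affine-function-min-value}: on each two-dimensional face $F$ of $\Delta\P$ the affine function $\Delta\pi_F$ satisfies $\Delta\pi_F(x,y)\geq\tfrac{m}{2}\,d\bigl((x,y),S\bigr)$ where $S$ is its zero set, while Lipschitz continuity gives $\lvert\Delta\bar\pi_F(x,y)\rvert\leq 4C\,d\bigl((x,y),S\bigr)$, so one fixed $\epsilon$ works on every face (\autoref{lemma:lipschitz-equiv-perturbation}). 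Replacing your piecewise-linearity claim with this Lipschitz argument turns your outline into the paper's proof.
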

The proof relies on our new characterization of facets, as well as on a
technical development on so-called \emph{effective perturbation functions} in
\autoref{s:effective}, which is also of independent interest.

Then we investigate the notions of facets and weak facets in the case of
discontinuous functions.  This appears to be a first in the published
literature.  All papers that consider discontinuous functions only used the
notion of extreme functions.  

We give \textbf{three discontinuous functions that furnish the separation of the three
notions} (\autoref{s:discontinuous_examples}): 
A function $\psi$ that is extreme, but is neither a weak facet nor a facet;
a function $\pi$ that is not an extreme function (nor a facet), but is a weak
facet;
and a function $\pilifted$ that is extreme and a weak facet but is
not a facet; see \autoref{fig:separation}. Two of these three separations are obtained by
extending a rather complicated construction from the authors' paper
\cite{koeppe-zhou:crazy-perturbation}; the proofs are in part
computer-assisted.

\begin{figure}[tp]
  \centering
  \begin{tikzpicture}
    [scale=0.9, 
    font=\small, valid/.style={rounded
      corners,fill=validcolor!20}, minimal/.style={rounded
      corners,fill=minimalcolor!20}, extreme/.style={rounded
      corners,fill=extremecolor!20}, convexcomb/.style={very thick},
    domination/.style={}, facet/.style={rounded corners,fill=facetcolor!20},
    weakfacet/.style={rounded corners,fill=weakfacetcolor!20}] \draw[valid]
    (0,1)--(0,6)--(7,6)--(7,1)--cycle; \draw[minimal]
    (.5,1.5)--(.5,5)--(6.5,5)--(6.5,1.5)--cycle; \draw[extreme] (2.5,3) ellipse
    (1.75 and 1.25); \draw[weakfacet, opacity = .5] (4.5,3) ellipse (1.75 and
    1.25); \draw (2.5,3) ellipse (1.75 and 1.25); \draw (4.5,3) ellipse (1.75
    and 1.25); \draw[facet] (3.5,3) ellipse (.75 and .6); \node at (3.5, 5.5)
    (VALIDTEXT) { valid functions}; \node at (3.5, 4.6) { minimal functions};
    \node at (1.8, 3.25) { extreme}; \node at (1.8, 2.75) { functions}; \node
    at (5.1, 3.25) {weak}; \node at (5.1, 2.75) {facets}; \node at (3.5, 3) {
      facets};
    \node[color=blue] at (2.0, 2.2) {$\psi$}; 
    \node[color=blue] at (4.7, 2.2) {$\pi$};
    \node[color=blue] at (3.5, 2.2) {$\hat\pi$};
  \end{tikzpicture}
  \caption{Separation of the three notions in the discontinuous case}
  \label{fig:separation}
\end{figure}
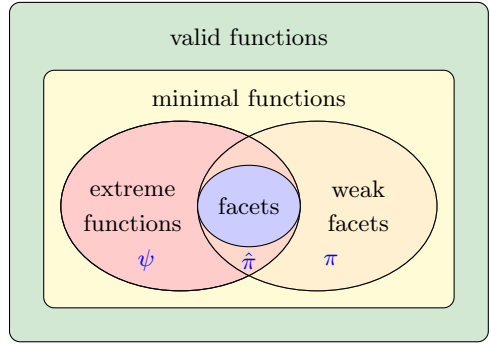

It remains an open question whether this separation can also be done using
continuous (necessarily non--piecewise linear) functions.  We discuss this
question in the conclusions of the paper, \autoref{s:conclusion}.

\section{Minimal valid functions and their perturbations}
\label{sec:notations}
Following \cite{igp_survey}, we define possibly discontinuous piecewise linear
functions~$\pi$ on~$\R$ as follows.  Take a collection $\P_1$ of closed
proper intervals (\emph{one-dimensional faces}) $I \subseteq \R$ such that $\R
= \bigcup \P_1$ (\emph{completeness}) and the intersection of any two distinct $I_1, I_2 \in \P_1$
is either empty or a singleton that consists of a common endpoint of $I_1$
and~$I_2$ (\emph{face-to-face property}).  \DIFFPROTECT{Let $\P_0$ be 
the set of singletons (\emph{zero-dimensional faces, vertices}) arising as
intersections $I_1\cap I_2$ for $I_1, I_2 \in 
\P_1$. Define $\P = \{\emptyset\} \cup \P_0 \cup \P_1$, which we refer to as a
\emph{polyhedral complex}.
\DIFFPROTECT{We assume that it is \emph{locally finite}, i.e., every compact interval of $\R$ has a
nonempty intersection with only finitely many elements of $\P$.}
We call a function~$\pi$ \emph{piecewise linear} over the complex~$\P$
if it is affine linear on the relative interior of each face~$I\in\P$.
This is a nontrivial condition only for the one-dimensional faces $I = [a,b]\in\P_1$, 
for which it means that $\pi$ is affine linear on the open interval $(a,b)$.
To express limits, for $x\in I$ we denote
\begin{equation}
  \label{eq:1d-limit}
  \pi_I(x) = \lim_{\substack{u\to x\\ u \in\relint(I)}} \pi(u).
\end{equation}
We have
\begin{align}
  \pi(x) &= \pi_I(x) \quad\text{for all $x$ in the relative
           interior of the face~$I\in\P$,}\\
  \intertext{%
  and thus $\pi_I$ is the extension of the affine linear function on
  $\relint(F)$ to the closed face~$F$. When $\pi$ is continuous, we have}
  \pi(x) &= \pi_I(x) \quad\text{for all $x$ in the face~$I\in\P$.}
           \label{eq:linear-on-face-continuous-case}
\end{align}}

\begin{example}\label{ex:hildebrand_discont_3_slope_1}
  \begin{figure}[tp]
    \centering
    \includegraphics[width=\linewidth]{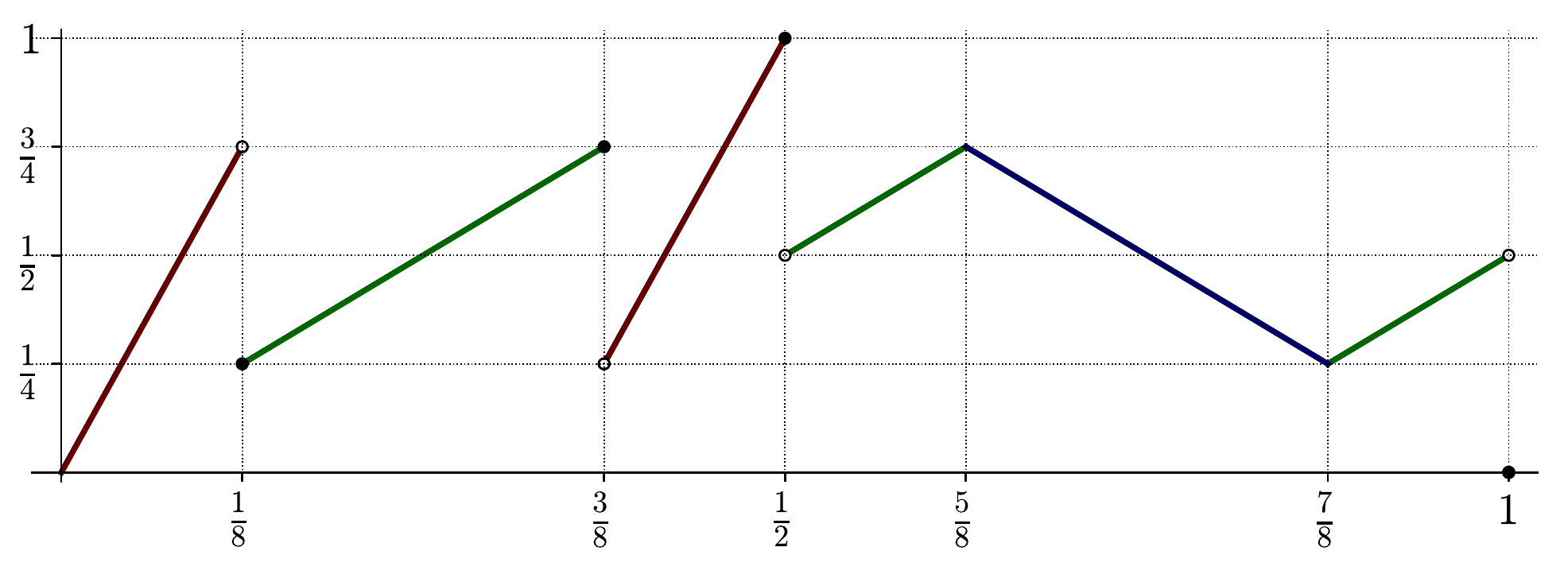}
    \caption{The piecewise linear function
      $\psi = \sage{hildebrand\_discont\_3\_slope\_1()}$.}
    \label{fig:hildebrand_discont_3_slope_1}
  \end{figure}
  Consider the discontinuous piecewise linear function~$\psi$ shown in
  \autoref{fig:hildebrand_discont_3_slope_1}, which will become important in
  \autoref{s:discontinuous_examples}.  It was constructed by Hildebrand (2013,
  unpublished; reported in \cite{igp_survey}) and is available in the
  electronic compendium of extreme functions
  \cite{cutgeneratingfunctionology:lastest-release} as
  \sage{\sagefunc{hildebrand_discont_3_slope_1}()}.
  Here $\P_1$ consists of the one-dimensional faces (closed proper intervals) $[0, \frac18]$,
  $[\frac18,\frac38]$, $[\frac38,\frac12]$, $[\frac12,\frac58]$,
  $[\frac58,\frac78]$, $[\frac78,1]$, and their translations by integers.
  $\P_0$ consists of the singletons corresponding to all endpoints of these
  intervals. 
  For $I=[0, \frac18]\in\P_1$, we have the linear function $\pi_I(x) = 6x$ for
  $x\in I$, 
  and $\pi(x) = \pi_I(x)$ for $x \in \relint(I) = (0,\frac18)$. 
  For $I=\relint(I)=\{\frac18\}\in\P_0$, we have $\pi(\frac18) = \pi_I(\frac18) = \frac14$. 
\end{example}

For a function $\pi\colon \R \to \R$, 
define the \emph{subadditivity slack} of $\pi$ as $\Delta\pi(x,y) := \pi(x) + \pi(y)
- \pi(x+y)$; then $\pi$ is subadditive if and only if $\Delta\pi(x,y)\geq0$
for all $x, y\in \R$. Denote the \emph{additivity domain} of $\pi$ by
$$E(\pi) = \{\,(x,y) \st \Delta\pi(x,y) = 0\,\}.$$
By a theorem of Gomory and
Johnson~\cite{infinite} (see \cite[Theorem 2.6]{igp_survey}), 
the minimal valid functions are
exactly the subadditive functions $\pi\colon \R\to\R_+$
that satisfy $\pi(0) = 0$,
are periodic modulo~$1$ and satisfy the \emph{symmetry condition}
$\pi(x) + \pi(f - x) = 1$ for all $x\in\R$.  As a consequence, minimal valid
functions are bounded between $0$ and~$1$.

To combinatorialize the additivity domains of piecewise linear subadditive
functions, we work with a two-dimensional polyhedral complex $\Delta \P
$.
It is defined as the collection of (closed)
polyhedra 
$$F(I, J, K) = \bigl\{\,(x,y) \in \R \times \R \bigst x \in I,\, y \in J,\,
x+y \in K\,\bigr\}$$
for $I, J, K \in \P$, which we refer to as the \emph{faces} of $\Delta\P$. 
As $I$, $J$, and $K$ can be proper intervals or singletons of $\P$, the
nonempty faces $F$ of $\Delta\P$ can be zero-, one-, or two-dimensional.
\autoref{fig:simple_E_pi_extreme_not_facet} (left) shows $\Delta\P$
corresponding to the function~$\psi$ of \autoref{ex:hildebrand_discont_3_slope_1}.
Define the projections $p_1,p_2,p_3\colon \R\times \R \to \R$ as
$p_1(x,y) = x$,  $p_2(x,y) = y$, $p_3(x,y) = x+y$.

In the continuous case, since the function $\pi$ is piecewise linear over
$\P$, we have by~\eqref{eq:linear-on-face-continuous-case} that $\Delta \pi$
is affine linear over each face $F \in \Delta \P$.  
Let $\pi$ be a  minimal valid function for $R_f(\R, \Z)$ that is piecewise linear over $\P$. 
Following \cite{igp_survey}, we define the \emph{space of perturbation functions with prescribed additivities} $E = E(\pi)$ 
\begin{equation}
\bar{\Pi}^{E}(\R,\Z) = \left\{\bar{\pi} \colon \R \to \R \, \Bigg| \,
\begin{array}{r@{\;}c@{\;}ll}
\bar{\pi}(0) &=& 0 \\
\bar{\pi}(f) &=& 0 \\
\Delta\bar{\pi}(x,y) &=& 0 & \text{ for all } (x,y) \in E\\
\bar{\pi}(x+z) &=& \bar{\pi}(x) & \text{ for all } x \in \R,\, z \in \Z
\end{array} \right\}.
\label{eq:perturbation_space_simple}
\end{equation}

When $\pi$ is discontinuous, one also needs to consider the limit points where
the subadditivity slacks are approaching zero at the relative boundary of a
face. Let $F$ be a face of $\Delta \P $. For $(x,y)\in F$, we denote
\begin{equation}
  \label{eq:delta-pi-f}
  \Delta\pi_F(x,y) := 
  \lim_{\substack{(u,v) \to (x,y)\\ (u,v) \in \relint(F)}} \Delta\pi(u,v).
\end{equation}
(For $(x, y)\in\relint(F)$, we have $\Delta\pi_F(x,y) = \Delta\pi(x,y)$. In
particular, for zero-dimensional faces $F = \{(x, y)\}$, we have $\relint(F) =  \{(x,
y)\}$, so the only sequence considered in the limit is the constant sequence $(x,y)$, 
and thus the limit is just the value $\Delta\pi(x,y)$.)
Define 
\[E_F(\pi) = \{\,(x,y)\in F \st \Delta\pi_F(x,y) \text{ exists, and } \Delta\pi_F(x,y) = 0\,\}.\]
Notice that in the above definition of $E_F(\pi)$, we include the condition that
the limit denoted by $\Delta\pi_F(x,y)$ exists, so that this definition can as well be applied to
functions $\pi$ (and $\bar\pi$) that are not piecewise linear over $\P$. 

We denote by $E_{\bullet}(\pi, \P)$ the family of sets $E_F(\pi)$, indexed by $F\in \Delta\P$.
Define the \emph{space of perturbation functions with prescribed additivities and limit-additivities} $E_{\bullet} = E_{\bullet}(\pi, \P)$
\begin{equation}
\bar{\Pi}^{E_{\bullet}}(\R,\Z) = \left\{\bar{\pi} \colon \R \to \R \, \Bigg| \,
\begin{array}{r@{\;}c@{\;}ll}
\bar{\pi}(0) &=& 0 \\
\bar{\pi}(f) &=& 0 \\
\Delta\bar{\pi}_F(x, y) &=& 0 & \text{ for } (x,y) \in E_F, \ F \in \Delta\P\\
\bar{\pi}(x+z) &=& \bar{\pi}(x) & \text{ for } x \in \R,\, z \in \Z
\end{array} \right\}.
\label{eq:perturbation_space}
\end{equation}
\begin{remark}\label{rem:e-bullet-continuous}
Let $\bar{\pi} \in \bar{\Pi}^E(\R,\Z)$. The third condition of \eqref{eq:perturbation_space_simple} is equivalent to $E(\pi) \subseteq E(\bar{\pi})$. 
Let $\bar{\pi} \in \bar{\Pi}^{E_{\bullet}}(\R,\Z)$. The third condition of \eqref{eq:perturbation_space} is equivalent to $E_F(\pi) \subseteq E_F(\bar{\pi})$ for all faces $F\in \Delta\P$, which is stronger than $E(\pi) \subseteq E(\bar{\pi})$ in \eqref{eq:perturbation_space_simple}.
Thus, in general, $\bar{\Pi}^{E_{\bullet}}(\R,\Z) \subseteq \bar{\Pi}^E(\R,\Z)$.  
If $\pi$ is continuous, then $\Delta\pi_F(x,y) = \Delta\pi(x,y)$ for $(x, y)
\in F$. Therefore, $E(\pi) \subseteq E(\bar{\pi})$ implies that $E_F(\pi) \subseteq E_F(\bar{\pi})$ for all faces $F\in \Delta\P$, hence $\bar{\Pi}^{E_{\bullet}}(\R,\Z) = \bar{\Pi}^E(\R,\Z)$.  
\end{remark}

\section{Effective perturbation functions}
\label{s:effective}

Following \cite{koeppe-zhou:crazy-perturbation}, 
we define the vector space
\begin{equation}
\tilde{\Pi}^{\pi}(\R,\Z) = \left\{\,\tilde{\pi} \colon \R \to \R \, \mid \, \exists \, \epsilon>0 \text{ s.t.\ } \pi^{\pm} = \pi \pm \epsilon\tilde{\pi} \text{ are minimal valid}\,\right\},
\label{eq:effective-perturbation-space}
\end{equation}
whose elements are called \emph{effective perturbation functions} for~$\pi$.
Because of \cite[Lemma 2.11\,(i)]{igp_survey}, a function $\pi$ is
extreme if and only if $\tilde{\Pi}^{\pi}(\R,\Z) = \left\{ 0\right\}$. Note
that every
function $\tilde{\pi} \in \tilde{\Pi}^{\pi}(\R,\Z)$ is bounded
.

It is clear that
if $\tilde\pi \in \tilde{\Pi}^{\pi}(\R,\Z)$, then $\tilde\pi \in
\bar{\Pi}^{E_{\bullet}}(\R,\Z)$, where $E_{\bullet} = E_{\bullet}(\pi, \P)$; see
\cite[Lemma 2.7]{basu-hildebrand-koeppe:equivariant} or
\cite[{Lemma~\ref{crazy:lemma:tight-implies-tight}}]{koeppe-zhou:crazy-perturbation}.

The other direction does not hold in general, but requires additional
hypotheses.  Let $\bar\pi \in \bar{\Pi}^{E_{\bullet}}(\R,\Z)$.  In
\cite[Theorem 3.13]{bhk-IPCOext} (see also \cite[Theorem 3.13]{igp_survey}),
it is proved that if $\pi$ and $\bar\pi$ are continuous
and $\bar\pi$ is piecewise linear, we have
$\bar\pi \in \tilde{\Pi}^{\pi}(\R,\Z)$.  (Similar arguments also appeared in
the earlier literature, for example in the proof of \cite[Theorem
3.2]{basu-hildebrand-koeppe:equivariant}.)

We will need a more general version of this result. 
Consider the following definition.
Given a locally finite complete polyhedral complex $\mathcal{P}$ of $\R$, 
we call a function $\bar\pi\colon \R \to \R$ \textit{piecewise Lipschitz
  continuous} over~$\mathcal{P}$, if it is Lipschitz continuous over the
relative interior of each face of the complex. Under this definition,
piecewise Lipschitz continuous functions can be discontinuous at the relative
boundaries of the faces.
\begin{theorem}
\label{lemma:lipschitz-equiv-perturbation}
Let $\pi\colon\R\to\R$ be a minimal valid function that is piecewise linear over a
locally finite polyhedral complex~$\P$. Let $\bar\pi \in \bar\Pi^{E_{\bullet}}(\R,\Z)$ be a
perturbation function, where $E_{\bullet} = E_{\bullet}(\pi, \P)$. Suppose
that $\bar\pi$ is piecewise Lipschitz continuous over $\P$. Then $\bar\pi$ is
an effective perturbation function, $\bar\pi \in
\tilde \Pi^{\pi}(\R,\Z)$.  
\end{theorem}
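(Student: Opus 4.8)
The plan is to verify directly the Gomory--Johnson minimality criterion for the two candidates $\pi^{\pm} = \pi \pm \epsilon\bar\pi$, and to exhibit a single $\epsilon>0$ that makes both minimal valid; by \eqref{eq:effective-perturbation-space} this is exactly the assertion $\bar\pi\in\tilde\Pi^{\pi}(\R,\Z)$. Several required properties are immediate and linear in $\bar\pi$: since $\pi(0)=\bar\pi(0)=0$ we get $\pi^{\pm}(0)=0$; periodicity of $\pi^{\pm}$ follows from periodicity of $\pi$ and the fourth defining condition of \eqref{eq:perturbation_space}; and both $\pi$ and $\bar\pi$ are bounded (the latter because it is periodic and piecewise Lipschitz over a locally finite complex), so $\pi^{\pm}$ is bounded. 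A useful first reduction is that, for a bounded periodic function, \emph{nonnegativity is implied by subadditivity}: if $\pi^{\pm}(a)<0$ for some $a$, then subadditivity gives $\pi^{\pm}(na)\le n\,\pi^{\pm}(a)\to-\infty$, contradicting boundedness. Hence it suffices to establish the symmetry condition and subadditivity of $\pi^{\pm}$.

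For symmetry I would show $\bar\pi(x)+\bar\pi(f-x)=0$ for all $x$; combined with $\pi(x)+\pi(f-x)=1$ this yields $\pi^{\pm}(x)+\pi^{\pm}(f-x)=1$. Because $\bar\pi(f)=0$, the identity $\bar\pi(x)+\bar\pi(f-x)=0$ is exactly $\Delta\bar\pi(x,f-x)=0$. Now the symmetry of $\pi$, together with $\pi(f)=1$, says precisely that every pair $(x,f-x)$ is additive, $\Delta\pi(x,f-x)=0$. Each such point lies in the relative interior of a unique face $F$ of $\Delta\P$, where $\Delta\pi(x,f-x)=\Delta\pi_F(x,f-x)=0$, so $(x,f-x)\in E_F(\pi)$; the third defining condition of \eqref{eq:perturbation_space} then forces $\Delta\bar\pi_F(x,f-x)=\Delta\bar\pi(x,f-x)=0$, as desired.

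The heart of the proof, and the step I expect to be the main obstacle, is subadditivity: I must produce $\epsilon>0$ with $\Delta\pi(x,y)\pm\epsilon\,\Delta\bar\pi(x,y)\ge 0$ for all $(x,y)$. The plan is to work one face of $\Delta\P$ at a time, using that every point lies in the relative interior of exactly one face, and to exploit periodicity of $\Delta\pi$ and $\Delta\bar\pi$ (reducing, after passing to a periodic refinement of $\P$, to the finitely many faces meeting a fundamental domain). Fix a face $F$. On $\relint F$ the function $\Delta\pi$ coincides with an affine function $\Delta\pi_F$ that is nonnegative on the polytope $F$, while $\Delta\bar\pi$ coincides with $\Delta\bar\pi_F$, the Lipschitz extension (constant $L_F$) of the Lipschitz function $\Delta\bar\pi|_{\relint F}$. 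Let $E_F(\pi)=\{\,p\in F:\Delta\pi_F(p)=0\,\}$, which is a face of $F$. If $E_F(\pi)=\emptyset$ then $\Delta\pi_F\ge m_F>0$ on the compact set $F$, and any $\epsilon_F$ with $\epsilon_F\sup_F|\Delta\bar\pi_F|\le m_F$ works; if $E_F(\pi)=F$ then $\Delta\bar\pi_F\equiv 0$ on $F$ by the prescribed additivities, so $\Delta\pi^{\pm}\equiv 0$ on $\relint F$ and any $\epsilon$ works. The interesting case is a proper nonempty face $E_F(\pi)$. There both $\Delta\pi_F$ and $\Delta\bar\pi_F$ vanish on $E_F(\pi)$, and I would combine a polyhedral error bound (Hoffman's lemma) $\Delta\pi_F(p)\ge c_F\,\mathrm{dist}(p,E_F(\pi))$ with $c_F>0$ and the Lipschitz estimate $|\Delta\bar\pi_F(p)|\le L_F\,\mathrm{dist}(p,E_F(\pi))$. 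Together these give $|\Delta\bar\pi_F(p)|\le (L_F/c_F)\,\Delta\pi_F(p)$ on $\relint F$, whence $\Delta\pi^{\pm}\ge \Delta\pi_F\,(1-\epsilon L_F/c_F)\ge 0$ for $\epsilon\le c_F/L_F=:\epsilon_F$.

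Finally I would take $\epsilon=\min_F\epsilon_F$ over the finitely many faces in a fundamental domain (the same analysis applies verbatim to the lower-dimensional faces, where the affine and Lipschitz estimates live on a segment or a point). This $\epsilon>0$ makes $\pi^{\pm}$ subadditive, hence, by the reductions above, minimal valid, giving $\bar\pi\in\tilde\Pi^{\pi}(\R,\Z)$. The point requiring the most care -- and the reason the discontinuous setting is genuinely harder than the continuous theorem \cite[Theorem 3.13]{bhk-IPCOext} -- is that $\Delta\pi$ and $\Delta\bar\pi$ need not be continuous across faces, so every estimate must be phrased through the one-sided face limits $\Delta\pi_F,\Delta\bar\pi_F$ and checked separately on each relative interior; the coupling of the linear lower bound on $\Delta\pi_F$ with the Lipschitz upper bound on $\Delta\bar\pi_F$ near the additive face $E_F(\pi)$ is the crux.
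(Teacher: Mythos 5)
Your proposal is correct and follows essentially the same route as the paper's proof: the same face-by-face analysis over $\Delta\P$ phrased through the one-sided limits $\Delta\pi_F$ and $\Delta\bar\pi_F$, the same case split on whether the zero set of $\Delta\pi_F$ in $F$ is empty, and the same coupling of a linear lower bound $\Delta\pi_F(p)\geq c_F\,\mathrm{dist}(p,E_F(\pi))$ with the Lipschitz upper bound $\left|\Delta\bar\pi_F(p)\right|\leq L_F\,\mathrm{dist}(p,E_F(\pi))$ obtained from the nearest additive point. The only cosmetic differences are that you invoke Hoffman's error bound where the paper proves the equivalent elementary estimate as \autoref{lemma:affine-function-min-value} in the appendix, and that you take $\epsilon$ as a minimum of face-wise constants rather than the paper's uniform choice $\min\{m/M,\,m/(8C)\}$.
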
 


\begin{proof}
Let 
\begin{align*}
m := \min\{\,\Delta\pi_F(x,y) \mid {} & (x,y)\in \verts(\Delta \mathcal{P}),\, F
  \text{ is a face of } \Delta \mathcal{P} \\ & \text { such that } (x,y)\in F \text{ and } \Delta\pi_F(x,y)\neq 0\,\};
\end{align*}
\DIFFPROTECT{Because $\pi$ is minimal, it is periodic modulo~$1$; thus
$\bar\pi\in\bar\Pi^{E_{\bullet}}(\R,\Z)$ implies that $\bar\pi$ is also
periodic modulo~$1$.  Because $\P$ is locally finite, only finitely many faces
of it have a nonempty intersection with $[0, 1]$.  Take a positive number $C$
that is greater than the Lipschitz constant of $\bar\pi$ on the relative
interior of each of these finitely many faces.  Then because of periodicity,
$C$ is larger than the Lipschitz constant on all faces of~$\P$.
Moreover, because $\pi$ is piecewise linear over~$\P$, periodic, and nonconstant (as
$\pi(0) = 0$ and $\pi(f)=1$), all faces 
of~$\P$ are bounded.}  Hence $\bar\pi$ is bounded,
and therefore
\[M := \sup_{(x,y)\in \R^2} \left|\Delta\bar \pi(x,y)\right|\]
is finite.
If $M=0$, then $\pi$ is additive; because it is also piecewise Lipschitz continuous and periodic, it follows that $\bar \pi \equiv 0$, and thus $\bar \pi \in \tilde\Pi^{\pi}(\R,\Z)$ holds trivially. In the following, we assume $M>0$. 
Define 
\(\epsilon := \min\big\{\frac{m}{M}, \frac{m}{8C}\big\}.\)
We also have $m > 0$, since $\pi$ is subadditive and $\Delta\pi$ is non-zero somewhere. Thus, $\epsilon>0$.
Let $\pi^+ = \pi+\epsilon\bar\pi$ and $\pi^- = \pi-\epsilon\bar\pi$, which we collectively refer to
as~$\pi^\pm$. We want
to show that $\pi^{\pm}$ are minimal valid
. 

We claim that $\pi^+$ and $\pi^-$ are subadditive functions. Let $(x, y) \in 
[0,1]^2$. 
Let $F$ be a face of $\Delta \P$ such that $(x, y) \in F$. 
We denote the limit~\eqref{eq:delta-pi-f} of $\pi^\pm$ by
$\Delta\pi_F^{\pm}(x, y)$; 
we will show that it is nonnegative.
First, assume $\Delta\pi_F(x, y)=0$. It follows from $E_F(\pi)\subseteq E_F(\bar\pi)$ that $\Delta\bar\pi_F(x, y)=0$. Therefore, $\Delta\pi_F^{\pm}(x, y) = 0$.
Next, assume $\Delta\pi_F(x, y) \neq 0$. Consider $S=\{\,(u,v) \in F \mid \Delta\pi_F(u,v)=0\,\}$, which is a closed set since $\Delta\pi_F$ is continuous over the face $F$. 

If $S=\emptyset$, then $\Delta\pi_F(u,v) \geq m$ for any $(u,v) \in \verts(F)$. We have $\Delta\pi_F(x,y)\geq m$ by the fact that $\Delta\pi_F$ is affine over $F$. 
Hence, in this case, 
\DIFFPROTECT{\begin{align*}
  \Delta\pi_F^{\pm}(x, y) 
  &= \Delta\pi_F(x, y)  \pm \epsilon\Delta\bar\pi_F(x,y) \\
  & \geq \Delta\pi_F(x, y) -\epsilon\left| \Delta\bar\pi_F(x,y)\right| \geq m - \frac{m}{M}M \geq 0.
\end{align*}}

Now consider the case $S\neq \emptyset$. 
Let $d$ denote the Euclidean distance from $(x,y)$ to $S$. Since $S$ is a
closed set, there exists a point $(x',y') \in S$ such that $(x-x')^2+(y-y')^2 = d^2$. 
Let $I = p_1(F), J = p_2(F)$ and $K = p_3(F)$. Then $x, x' \in I, \; y, y' \in J$ and $x+y, x'+y' \in K$.
It follows from $E_F(\pi)\subseteq E_F(\bar\pi)$ and $\Delta\pi_F(x', y')=0$ that $\Delta\bar\pi_F(x', y')=0$.
Therefore,
\begin{align*}
  \Delta\bar\pi_F(x,y) &= \Delta\bar\pi_F(x,y) -\Delta\bar\pi_F(x',y') \\
                       &= \bar\pi_I(x) - \bar\pi_I(x')+\bar\pi_J(y) - \bar\pi_J(y')+\bar\pi_K(x+y)-\bar\pi_K(x'+y'),
\end{align*}
where $\bar\pi_I(x) = \lim_{u\to x, u\in\relint(I)} \bar\pi(u)$ as in~\eqref{eq:1d-limit}.
Since $\bar\pi$ is Lipschitz continuous over $\relint(I), \relint(J)$ and $\relint(K)$, we have that
\begin{align*}
\left|\bar\pi_I(x) - \bar\pi_I(x')\right| &\leq  C \left| x -x' \right| \leq Cd; \\
\left|\bar\pi_J(y) - \bar\pi_J(y')\right| &\leq  C \left| y -y' \right| \leq Cd; \\
\left|\bar\pi_K(x+y) - \bar\pi_K(x'+y')\right| &\leq  C \left| x+y -x'-y' \right| \leq 2Cd.
\end{align*}
Hence $\left|\Delta\bar\pi_F(x,y)\right| \leq 4Cd$.
Applying a geometric estimate
(\autoref{lemma:affine-function-min-value} in Appendix~\ref{appendix:omitted} with $g=\Delta\pi_F$) shows that
$\Delta\pi_F(x,y)\geq \frac{md}{2}$. 
Therefore, in the case where $S \neq \emptyset$,
\begin{align*}
\Delta\pi_F^{\pm}(x, y) &= \Delta\pi_F(x, y)  \pm \epsilon\Delta\bar\pi_F(x,y) \\
& \geq \Delta\pi_F(x, y) -\epsilon\left| \Delta\bar\pi_F(x,y)\right| 
\geq \frac{md}{2} - \frac{m}{8C}(4Cd) = 0.
\end{align*}

We showed that $\pi^{\pm}$ are subadditive. Since $\bar\pi \in \bar\Pi^E(\R,\Z)$, we have $\pi^{\pm}(0)=\pi(0)=0$ and $\pi^{\pm}(f)=\pi(f) =1$. The last result along with $E(\pi)\subseteq E(\bar\pi)$ imply that $\pi^+(x)+ \pi^+(y)=\pi^-(x)+ \pi^-(y) = 1$ if $x + y \equiv f \pmod 1$. The functions $\pi^\pm$ are non-negative. Indeed, suppose that $\pi^+(x)<0$ for some $x\in\R$, then it follows from the subadditivity that $\pi^+(nx)\leq n\pi^+(x)$ for any $n \in \Z_+$, which is a contradiction to the boundedness of $\pi^+$.

Thus, $\pi^\pm$ are minimal valid functions. We conclude that $\bar \pi \in \tilde\Pi^{\pi}(\R,\Z)$.
\end{proof}

\section{Extreme functions and facets}
In this section, we discuss the relations between the notions of extreme
functions and facets.
We first review the definition of a facet, following
\cite[section 2.2.3]{igp_survey}; cf.\ ibid.\ for a discussion of this notion
in the earlier literature, in particular 
\cite{tspace} 
and \cite{dey3}
.

Let $P(\pi)$ denote the set of functions $y\colon \R \to \Z_+$ with finite
support satisfying \[ \DIFFPROTECT{\sum_{r\in\R}r\, y(r) \equiv f \pmod{1}} \quad \text{ and } \quad
  \sum_{r\in\R}\pi(r)y(r)=1.\] A valid function $\pi$ is called a \emph{facet}
if for every valid function $\pi'$ such that $P(\pi) \subseteq P(\pi')$ we
have that $\pi' =\pi$
. Equivalently, a valid function $\pi$ is a facet if this condition holds
for all such \emph{minimal} valid functions $\pi'$ \cite{basu-hildebrand-koeppe-molinaro:k+1-slope}. 

\begin{remark}
  \label{r:discont-lit}
  In our paper we investigate the notions of facets (and weak facets) in
  particular for the case of discontinuous functions.  This appears to be a
  first in the 
  published literature.  All papers that consider discontinuous functions only
  used the notion of extreme functions.  In particular,
  Dey--Richard--Li--Miller \cite{dey1}, who were the first to consider
  previously known discontinuous functions as first-class members of the
  Gomory--Johnson hierarchy of valid functions, use extreme functions
  exclusively; whereas \cite{dey3}, which was completed by a subset of the
  authors in the same year, uses (weak) facets exclusively.  The same is true
  in Dey's Ph.D. thesis \cite{Dey-thesis}: The notion of extreme functions is
  used in chapters regarding discontinuous functions; whereas the notion of
  facets is used when talking about (2-row) continuous functions.  Dey (2016,
  personal communication) remembers that at that time, he and his coauthors
  were aware that facets were the strongest notion and they would strive to
  establish facetness of valid functions whenever possible.  However, in the
  excellent survey \cite{Richard-Dey-2010:50-year-survey}, facets are no
  longer mentioned and the exposition is in terms of extreme functions.
\end{remark}

\begin{remark}
In the discontinuous case, the additivity in the limit plays a role in
extreme functions, which are characterized by the non-existence of an
effective perturbation function $\tilde{\pi}\not\equiv 0$. However facets (and
weak facets, see the next section) are defined through $P(\pi)$, which does not capture the limiting
additive behavior of $\pi$. The additivity domain $E(\pi)$, which appears in
the Facet Theorem as discussed below, also does not account for additivity in
the limit.
\end{remark}


A well known sufficient condition for facetness of a minimal valid function~$\pi$ is
the Gomory--Johnson Facet Theorem.  We have stated its strong form, due to
Basu--Hildebrand--K\"{o}ppe--Molinaro
\cite{basu-hildebrand-koeppe-molinaro:k+1-slope}, in the introduction as
\autoref{thm:facet-theorem-strong-form}. 
In order to prove our ``if and only if'' version, we need the following
lemma. 
\begin{lemma}
\label{lemma:P_pi_and_E_pi}
Let $\pi$ and $\pi'$ be minimal valid functions. Then
$E(\pi) \subseteq E(\pi')$ if and only if $P(\pi) \subseteq P(\pi')$. 
\end{lemma}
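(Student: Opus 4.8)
The plan is to prove the two implications separately. The forward direction $E(\pi)\subseteq E(\pi')\Rightarrow P(\pi)\subseteq P(\pi')$ rests on a ``subadditivity chain'' that transfers from $\pi$ to $\pi'$, while the reverse direction $P(\pi)\subseteq P(\pi')\Rightarrow E(\pi)\subseteq E(\pi')$ rests on building, for each additive pair, a three-term tight solution using the symmetry condition.

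For the forward direction, let $y\in P(\pi)$ and list its support with multiplicities as a finite sequence $s_1,\dots,s_n\in\R$, so that $\sum_{j} s_j = \sum_r r\,y(r)\equiv f\pmod 1$ and $\langle\pi,y\rangle=\sum_j \pi(s_j)=1$. Since $\pi$ is minimal it is periodic modulo~$1$ with $\pi(f)=1$, so $\pi\bigl(\sum_j s_j\bigr)=1$. Iterated subadditivity gives $\sum_j\pi(s_j)\ge \pi\bigl(\sum_j s_j\bigr)$, and the total slack $\sum_j\pi(s_j)-\pi\bigl(\sum_j s_j\bigr)$, which equals $0$, telescopes into $\sum_{i=1}^{n-1}\Delta\pi\bigl(\sum_{j\le i}s_j,\,s_{i+1}\bigr)$, a sum of nonnegative terms. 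Hence each summand vanishes, so every pair $\bigl(\sum_{j\le i}s_j,\,s_{i+1}\bigr)$ lies in $E(\pi)\subseteq E(\pi')$. Telescoping these additivities back up for $\pi'$ yields $\sum_j\pi'(s_j)=\pi'\bigl(\sum_j s_j\bigr)=\pi'(f)=1$, i.e.\ $y\in P(\pi')$.

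For the reverse direction, take $(x,y)\in E(\pi)$, so that $\pi(x)+\pi(y)=\pi(x+y)$, and set $z:=f-x-y$. The symmetry condition applied at $x+y$, namely $\pi(x+y)+\pi\bigl(f-(x+y)\bigr)=1$, gives $\pi(x)+\pi(y)+\pi(z)=1$, while $x+y+z=f$. Thus the finite-support function $w$ with $w(x)=w(y)=w(z)=1$ (adding multiplicities whenever two of $x,y,z$ coincide) satisfies $\sum_r r\,w(r)\equiv f\pmod1$ and $\langle\pi,w\rangle=1$, so $w\in P(\pi)\subseteq P(\pi')$. Therefore $\pi'(x)+\pi'(y)+\pi'(z)=1$; applying the symmetry condition for $\pi'$, i.e.\ $\pi'(z)=1-\pi'(x+y)$, we conclude $\pi'(x)+\pi'(y)=\pi'(x+y)$, that is $(x,y)\in E(\pi')$.

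I expect the genuine content to be the three-term witness in the reverse direction; the only real care needed there is the bookkeeping when $x$, $y$, $z=f-x-y$ are not pairwise distinct (so $w$ takes values $2$ or $3$) and the degenerate cases such as $x=0$, all of which are handled uniformly because feasibility and tightness depend only on the formal sums $\sum_r r\,w(r)$ and $\sum_r\pi(r)w(r)$. In the forward direction the only subtlety is that the partial sums $\sum_{j\le i}s_j$ may lie outside $[0,1)$, which is harmless since $E(\pi)$ and $E(\pi')$ are defined on all of $\R^2$ and both $\pi,\pi'$ are periodic.
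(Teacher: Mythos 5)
Your proof is correct. The direction $E(\pi)\subseteq E(\pi')\Rightarrow P(\pi)\subseteq P(\pi')$ is exactly the paper's argument: both of you observe that for $y\in P(\pi)$ the total slack $\sum_j\pi(s_j)-\pi\bigl(\sum_j s_j\bigr)=0$ decomposes (telescopes) into nonnegative subadditivity slacks, each of which must therefore vanish and hence also vanishes for $\pi'$. The difference is in the other direction, $P(\pi)\subseteq P(\pi')\Rightarrow E(\pi)\subseteq E(\pi')$: the paper does not prove this at all but cites it from Basu--Hildebrand--K\"oppe--Molinaro (Theorem~20 there, also Theorem~2.12 of the survey), whereas you supply a short self-contained argument. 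Your three-term witness $w$ supported on $x$, $y$, $z=f-x-y$, tight for $\pi$ by combining the additivity $\pi(x)+\pi(y)=\pi(x+y)$ with the symmetry condition at $x+y$, and then read backwards for $\pi'$ via its own symmetry condition, is essentially the standard proof of the cited result; your bookkeeping remarks about coinciding support points and about partial sums leaving $[0,1)$ are the right ones and are harmless for the reasons you state (tightness depends only on the formal sums, and both functions are periodic). The net effect is that your write-up is more self-contained than the paper's, at the cost of reproving a known lemma; the paper's choice to cite it keeps the proof to three lines.
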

\begin{proof}
The ``if'' direction is proven in
\cite[Theorem 20]{basu-hildebrand-koeppe-molinaro:k+1-slope}; see also \cite[Theorem 2.12]{igp_survey}. 
We now show the ``only if" direction, using the subadditivity of $\pi$. Assume that $E(\pi) \subseteq E(\pi')$. Let $y \in P(\pi)$. Let $\{r_1, r_2, \dots, r_n\}$ denote the finite support of $y$. By definition, the function $y$ satisfies that $y(r_i) \in \Z_+$,  $\sum_{i=1}^n r_i y(r_i) \equiv f \pmod 1$, and $\sum_{i=1}^n \pi(r_i) y(r_i) = 1$. 
Since $\pi$ is a minimal valid function, we have that 
\(1 = \sum_{i=1}^n \pi(r_i) y(r_i) \geq \pi\bigl(\sum_{i=1}^n r_i y(r_i)\bigr) = \pi(f) = 1.\) 
Thus, each subadditivity inequality here is tight for $\pi$, and is also tight for $\pi'$ since $E(\pi) \subseteq E(\pi')$.  We obtain \(\sum_{i=1}^n \pi'(r_i) y(r_i) = \pi'\bigl(\sum_{i=1}^n r_i y(r_i)\bigr) = \pi'(f) = 1,\) which implies that $y \in P(\pi')$. Therefore, $P(\pi) \subseteq P(\pi')$.
\end{proof}

\begin{theorem}[Facet Theorem, ``if and only if'' version]
\label{lemma:facet_theorem}
A minimal valid function $\pi$ is a facet if and only if for every minimal valid function $\pi'$, $E(\pi) \subseteq E(\pi')$ implies $\pi'=\pi$. 
\end{theorem}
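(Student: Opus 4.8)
The plan is to prove the two implications separately, reducing each to a result already at hand. For the ``if'' direction (sufficiency of the $E$-condition), note that the hypothesis is verbatim the hypothesis of the strong form of the Facet Theorem, \autoref{thm:facet-theorem-strong-form}: assuming that $E(\pi)\subseteq E(\pi')$ implies $\pi'=\pi$ for every minimal valid $\pi'$, that theorem concludes $\pi$ is a facet. So this direction requires no work beyond citing the strong form.

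The substance is in the ``only if'' direction, i.e.\ the converse asserting that facetness forces the $E$-condition. Here I would assume $\pi$ is a facet, take an arbitrary minimal valid function $\pi'$ with $E(\pi)\subseteq E(\pi')$, and aim to conclude $\pi'=\pi$. The bridge is \autoref{lemma:P_pi_and_E_pi}, whose ``only if'' direction turns $E(\pi)\subseteq E(\pi')$ into $P(\pi)\subseteq P(\pi')$. With $P(\pi)\subseteq P(\pi')$ in hand, I would invoke the definition of facet. It is essential to use the formulation of that definition over \emph{minimal} valid functions (``$\pi$ is a facet iff for every minimal valid $\pi'$ with $P(\pi)\subseteq P(\pi')$ we have $\pi'=\pi$''), since $\pi'$ is minimal valid; this yields $\pi'=\pi$ immediately.

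One could equally fuse both directions: since \autoref{lemma:P_pi_and_E_pi} makes the conditions $E(\pi)\subseteq E(\pi')$ and $P(\pi)\subseteq P(\pi')$ interchangeable for minimal valid $\pi,\pi'$, the stated $E$-condition is logically equivalent to the (minimal-valid) $P$-based defining condition of a facet, and the biconditional is then a one-line consequence. I nonetheless prefer the two-direction presentation, because it makes transparent that the ``if'' half is exactly the classical strong form and only the ``only if'' half is new.

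I do not anticipate a genuine obstacle in assembling the theorem itself: all the real content is pushed into \autoref{lemma:P_pi_and_E_pi}, and in particular into its newly proved ``only if'' direction, whose argument rests on subadditivity of $\pi$ together with $E(\pi)\subseteq E(\pi')$ (writing $1=\sum_i \pi(r_i)y(r_i)\ge \pi(f)=1$ and propagating the forced tight subadditivity relations from $\pi$ to $\pi'$). The only point demanding care is the choice of formulation of ``facet'': the $P$-based definition must be applied in its minimal-valid form, so that \autoref{lemma:P_pi_and_E_pi}, which is stated only for minimal valid functions, applies to $\pi'$ without extra justification.
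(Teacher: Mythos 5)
Your proposal is correct and is essentially the paper's own argument: the paper proves this theorem in one line by combining the strong form (\autoref{thm:facet-theorem-strong-form}) with \autoref{lemma:P_pi_and_E_pi}, exactly as you do, and your care in invoking the minimal-valid formulation of the facet definition for the converse is the right (and needed) point. No gaps.
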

\begin{proof}
It follows from the Facet Theorem in the strong form
(\autoref{thm:facet-theorem-strong-form}) and \autoref{lemma:P_pi_and_E_pi}.
\end{proof}

Recall the space $\Pi^{E}(\R,\Z)$ of perturbation functions with prescribed
additivities $E = E(\pi)$ from \autoref{sec:notations}.
In \cite[page 25, section 3.6]{igp_survey}, the Facet Theorem is reformulated in terms of
perturbation functions as follows:
\begin{quote}
  If $\pi$ is not a facet, then 
  there exists a non-zero $\bar \pi \in \bar \Pi^{E(\pi)}(\R,\Z)$ such that $\pi' = \pi + \bar \pi$ is a minimal valid function.   
\end{quote}
The authors of \cite{igp_survey} caution
that this last statement is not an ``if and only if'' statement.
We now prove that actually the following ``if and only if'' version holds.

\begin{lemma}
\label{lemma:facet_no_bar_pi}
A minimal valid function $\pi$ is a facet if and only if there is no non-zero $\bar{\pi} \in \bar\Pi^E(\R,\Z)$, where $E=E(\pi)$, such that $\pi+\bar\pi$ is minimal valid.
\end{lemma}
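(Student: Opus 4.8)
The plan is to derive this directly from the ``if and only if'' Facet Theorem (\autoref{lemma:facet_theorem}) by setting up the obvious linear correspondence $\bar\pi \leftrightarrow \pi' := \pi + \bar\pi$ between perturbations and competing minimal valid functions. The whole argument rests on two observations already available to us. First, $\Delta$ is linear, so $\Delta(\pi+\bar\pi) = \Delta\pi + \Delta\bar\pi$; this is what lets additivity information pass cleanly between $\bar\pi$ and $\pi'$. Second, by \autoref{rem:e-bullet-continuous}, the third defining condition of $\bar\Pi^E(\R,\Z)$ in \eqref{eq:perturbation_space_simple}, namely $\Delta\bar\pi(x,y)=0$ for all $(x,y)\in E=E(\pi)$, is exactly the statement $E(\pi)\subseteq E(\bar\pi)$. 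I would prove both directions via their contrapositives, since the same construction serves both.

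For the ``if'' direction, I would assume $\pi$ is \emph{not} a facet and produce the required nonzero perturbation. By \autoref{lemma:facet_theorem}, there is a minimal valid function $\pi'\neq\pi$ with $E(\pi)\subseteq E(\pi')$. Set $\bar\pi := \pi'-\pi$, which is nonzero because $\pi'\neq\pi$, and note $\pi+\bar\pi=\pi'$ is minimal valid by hypothesis. It then remains to check $\bar\pi\in\bar\Pi^E(\R,\Z)$: the boundary conditions $\bar\pi(0)=\pi'(0)-\pi(0)=0$ and $\bar\pi(f)=\pi'(f)-\pi(f)=1-1=0$ follow from the normalization $\pi(0)=\pi'(0)=0$ and the symmetry-forced value $\pi(f)=\pi'(f)=1$; periodicity modulo~$1$ of $\bar\pi$ is inherited from that of $\pi$ and $\pi'$; and for $(x,y)\in E(\pi)$ we have $\Delta\pi(x,y)=0$ and, since $E(\pi)\subseteq E(\pi')$, also $\Delta\pi'(x,y)=0$, whence $\Delta\bar\pi(x,y)=0$ by linearity of $\Delta$.

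For the ``only if'' direction I would run the same correspondence in reverse: assume a nonzero $\bar\pi\in\bar\Pi^E(\R,\Z)$ exists with $\pi':=\pi+\bar\pi$ minimal valid. Then $\pi'\neq\pi$, and for each $(x,y)\in E(\pi)$ the defining additivity condition on $\bar\pi$ gives $\Delta\bar\pi(x,y)=0$, so $\Delta\pi'(x,y)=\Delta\pi(x,y)+\Delta\bar\pi(x,y)=0$; hence $E(\pi)\subseteq E(\pi')$. Since $\pi'\neq\pi$ is minimal valid with $E(\pi)\subseteq E(\pi')$, \autoref{lemma:facet_theorem} shows $\pi$ is not a facet. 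Because both implications are now established in contrapositive form, the equivalence follows.

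This lemma is essentially a corollary, so I do not expect a genuine obstacle; the only point requiring care is the bookkeeping that membership in $\bar\Pi^E(\R,\Z)$ is \emph{exactly} matched by the conditions ``$\pi'$ minimal valid'' and ``$E(\pi)\subseteq E(\pi')$.'' In particular one must verify that the two boundary constraints $\bar\pi(0)=0$, $\bar\pi(f)=0$ correspond precisely to the fixed values of minimal valid functions at $0$ and $f$, and that the additivity constraint translates via the linearity of $\Delta$ together with \autoref{rem:e-bullet-continuous}; these are the steps that make the correspondence a genuine bijection rather than a mere implication in one direction.
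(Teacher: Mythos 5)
Your proposal is correct and takes essentially the same route as the paper: both directions reduce to the ``if and only if'' Facet Theorem (\autoref{lemma:facet_theorem}) via the correspondence $\bar\pi = \pi' - \pi$, with your version merely phrased contrapositively and spelling out the membership checks for $\bar\Pi^E(\R,\Z)$ that the paper leaves implicit.
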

\begin{proof}
Let $\pi$ be a minimal valid function. 

Assume that $\pi$ is a facet. Let $\bar{\pi} \in \bar\Pi^E(\R,\Z)$ where $E=E(\pi)$ such that $\pi' = \pi+\bar\pi$ is minimal valid. It is clear that $E(\pi) \subseteq E(\pi')$. By \autoref{lemma:facet_theorem}, $\pi' = \pi$. Thus, $\bar\pi \equiv 0$.

Assume there is no non-zero $\bar{\pi} \in \bar\Pi^E(\R,\Z)$, where $E=E(\pi)$, such that $\pi+\bar\pi$ is minimal valid. Let $\pi'$ be a minimal valid function such that $E(\pi) \subseteq E(\pi')$. Consider $\bar\pi = \pi' -\pi$. We have that $\bar{\pi} \in \bar\Pi^E(\R,\Z)$ and that $\pi+\bar\pi = \pi'$ is minimal valid. Then $\bar\pi \equiv 0$ by the assumption. Hence, $\pi' = \pi$. It follows from \autoref{lemma:facet_theorem} that $\pi$ is a facet.
\end{proof}

We will not use this lemma in the following.\medbreak

Now we come to the proof of a main theorem stated in the introduction.

\begin{proof}[of \autoref{lemma:cont_pwl_extreme_is_facet}]
Let $\pi$ be a continuous piecewise linear minimal valid function. As
mentioned in \cite[section 2.2.4]{igp_survey}, \cite[Lemma
1.3]{basu-hildebrand-koeppe-molinaro:k+1-slope}
showed that if $\pi$ is a facet, then $\pi$ is extreme.

We now prove the other direction by contradiction. Suppose that $\pi$ is extreme, but is not a facet. 
Then by \autoref{lemma:facet_theorem}, there exists a minimal valid function $\pi' \neq \pi$ such that $E(\pi)\subseteq E(\pi')$. 
Since $\pi$ is continuous piecewise linear and $\pi(0)=\pi(1)=0$, 
there exists $\delta >0$ such that $\Delta\pi(x,y) =0$ and $\Delta\pi(-x, -y) =0$ for $0 \leq x, y \leq \delta$. The condition $E(\pi) \subseteq E(\pi')$ implies that $\Delta\pi'(x,y) =0$ and $\Delta\pi'(-x, -y) =0$ for $0 \leq x, y \leq \delta$ as well. As the function $\pi'$ is bounded, it follows from the Interval Lemma (see \cite[Lemma 4.1]{igp_survey}, for example) that $\pi'$ is affine linear on $[0, \delta]$ and on $[-\delta, 0]$. We also know that $\pi'(0)=0$ as $\pi'$ is minimal valid. Using the subadditivity, we obtain that $\pi'$ is Lipschitz continuous. 

Let $\bar\pi = \pi' -\pi$. Then $\bar\pi \not\equiv 0$, $\bar\pi \in
\bar\Pi^E(\R, \Z)$ where $E = E(\pi)$, and $\bar\pi$ is Lipschitz
continuous. Since $\pi$ is continuous, we have $\bar\Pi^E(\R, \Z) =
\bar\Pi^{E_{\bullet}}(\R,\Z)$ by \autoref{rem:e-bullet-continuous}. By \autoref{lemma:lipschitz-equiv-perturbation}, there exists $\epsilon>0$ such that $\pi^\pm = \pi \pm \epsilon\bar\pi$ are distinct minimal valid functions. This contradicts the assumption that $\pi$ is an extreme function. 

Thus, the equality $\{\text{extreme functions in $\mathcal F_4$}\} = \{\text{facets in $\mathcal F_4$}\}$ is proved.
\end{proof}



\section{Weak facets}

We first review the definition of a weak facet, following
\cite[section 2.2.3]{igp_survey}; cf.\ ibid.\ for a discussion of this notion
in the earlier literature, in particular 
\cite{tspace} 
and \cite{dey3}
.
A valid function $\pi$ is called a \emph{weak facet} if for every valid
function $\pi'$ such that $P(\pi)\subseteq P(\pi')$ we have $P(\pi)=P(\pi')$.

As we mentioned above, to prove that $\pi$ is a
facet, it suffices to consider $\pi'$ that is minimal valid. The following
lemma shows it is also the case for weak facets. 

\begin{lemma}
\label{lemma:weak_facet_minimal}
\begin{enumerate}[\rm(1)]
\item Let $\pi$ be a valid function. If $\pi$ is a weak facet, then $\pi$ is minimal valid.
\item Let $\pi$ be a minimal valid function. Suppose that for every minimal valid function $\pi'$, we have that $P(\pi)\subseteq P(\pi')$ implies $P(\pi)=P(\pi')$. Then $\pi$ is a weak facet.
\item A minimal valid function $\pi$ is a weak facet if and only if for every
  minimal valid function $\pi'$, we have that $E(\pi)\subseteq E(\pi')$
  implies $E(\pi)=E(\pi')$.
\end{enumerate}
\end{lemma}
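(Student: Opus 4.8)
The plan is to prove the three parts in the stated order, relying throughout on one structural fact and one elementary observation. The structural fact is the standard domination result that every valid function is dominated pointwise by some minimal valid function (obtainable from Zorn's lemma, using that the pointwise infimum of a chain of valid functions is again valid, since each $y$ has finite support). The elementary observation is that if $\pi_1 \le \pi_2$ pointwise and both are valid, then $P(\pi_2) \subseteq P(\pi_1)$: for $y \in P(\pi_2)$ we have $1 \le \sum_r \pi_1(r) y(r) \le \sum_r \pi_2(r) y(r) = 1$, forcing equality. These two facts do essentially all the work.

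For part (1), suppose $\pi$ is a weak facet, hence valid, and let $\pi^{*}$ be a minimal valid function with $\pi^{*} \le \pi$. By the observation above, $P(\pi) \subseteq P(\pi^{*})$. I claim $\pi^{*} = \pi$, which makes $\pi$ minimal. If not, I would choose $r_0$ with $\pi^{*}(r_0) < \pi(r_0)$ and consider the feasible solution $y^{*}$ placing a unit mass at $r_0$ and at $f - r_0$ (a double mass at $r_0$ in the degenerate case $2 r_0 \equiv f \pmod 1$). The symmetry condition for the \emph{minimal} function $\pi^{*}$ gives $\sum_r \pi^{*}(r) y^{*}(r) = \pi^{*}(r_0) + \pi^{*}(f - r_0) = 1$, so $y^{*} \in P(\pi^{*})$; whereas $\sum_r \pi(r) y^{*}(r) = \pi(r_0) + \pi(f - r_0) > \pi^{*}(r_0) + \pi^{*}(f - r_0) = 1$, using $\pi^{*} \le \pi$ with strict inequality at $r_0$, so $y^{*} \notin P(\pi)$. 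Thus $P(\pi) \subsetneq P(\pi^{*})$, contradicting the weak-facet property; hence $\pi^{*} = \pi$ and $\pi$ is minimal.

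For part (2), let $\pi$ be minimal and assume the stated implication for all minimal valid $\pi'$; I must verify the weak-facet condition against an \emph{arbitrary} valid $\pi'$ with $P(\pi) \subseteq P(\pi')$. I would pick a minimal valid $\pi^{*} \le \pi'$; the observation gives $P(\pi') \subseteq P(\pi^{*})$, so $P(\pi) \subseteq P(\pi') \subseteq P(\pi^{*})$. Applying the hypothesis to the minimal function $\pi^{*}$ yields $P(\pi) = P(\pi^{*})$, and the sandwich forces $P(\pi) = P(\pi')$, as required. For part (3), I combine parts (1) and (2): a minimal $\pi$ is a weak facet if and only if $P(\pi) \subseteq P(\pi') \Rightarrow P(\pi) = P(\pi')$ holds for every minimal valid $\pi'$ (the forward direction just restricts the weak-facet condition to minimal $\pi'$, and the reverse is part (2), with part (1) ensuring a weak facet is already minimal). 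I then translate this into additivity domains via \autoref{lemma:P_pi_and_E_pi}, which for minimal valid functions gives $E(\pi) \subseteq E(\pi') \iff P(\pi) \subseteq P(\pi')$ and, applied in both directions, $E(\pi) = E(\pi') \iff P(\pi) = P(\pi')$. Clause by clause this rewrites the $P$-implication as $E(\pi) \subseteq E(\pi') \Rightarrow E(\pi) = E(\pi')$ for all minimal valid $\pi'$, which is the claim.

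The individual steps are short, so I do not expect a serious obstacle, but two points deserve care. The genuinely load-bearing ingredient is the domination fact invoked in parts (1) and (2); everything collapses to elementary sandwiching once it is in hand, so I would want to state it cleanly (or cite it) rather than reprove it inline. The one subtle construction is the separating solution $y^{*}$ in part (1): it is the symmetry of the minimal dominating function $\pi^{*}$, and not any property of $\pi$ itself, that manufactures a point of $P(\pi^{*}) \setminus P(\pi)$, so the argument must be run through $\pi^{*}$ and not directly through an arbitrary dominating function.
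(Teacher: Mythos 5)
Your proposal is correct and follows essentially the same route as the paper: dominate by a minimal valid function, use the sandwiching inequality to get $P(\pi)\subseteq P(\pi^*)$, separate with the two-point solution $y^*(r_0)=y^*(f-r_0)=1$ via the symmetry of the minimal dominating function, and reduce part (3) to part (2) together with \autoref{lemma:P_pi_and_E_pi}. The only (harmless) differences are that you phrase part (1) directly rather than by contradiction on minimality, and your sandwiching step uses validity of the dominating function where the paper uses its subadditivity and minimality.
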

\begin{proof}
(1) Suppose that $\pi$ is not minimal valid. Then, by \cite[Theorem
1]{basu-hildebrand-koeppe-molinaro:k+1-slope}, $\pi$ is dominated by another
minimal valid function $\pi'$, with $\pi(x_0) > \pi'(x_0)$ at some $x_0$. 
Let $y \in P(\pi)$. We have \DIFFPROTECT{\[1 = \sum_{r\in\R}\pi(r)y(r) \geq \sum_{r\in\R}\pi'(r)y(r) \geq \pi'\bigl(\sum_{r\in\R} r\, y(r)\bigr) = \pi'(f) =1.\]}
Hence equality holds throughout, implying that $y \in P(\pi')$. Therefore, $P(\pi)\subseteq P(\pi')$. Now consider $y$ with $y(x_0)=y(f-x_0)=1$ and $y(x)=0$ otherwise. It is easy to see that $y \in P(\pi')$, but $y \not\in P(\pi)$ since $\pi(x_0)+\pi(f-x_0) > \pi'(x_0)+\pi'(f-x_0) = 1$. Therefore, $P(\pi)\subsetneq P(\pi')$, a contradiction to the weak facet assumption on $\pi$.\smallbreak

(2) Consider any valid function $\pi^*$ (not necessarily minimal) such that
$P(\pi) \subseteq P(\pi^*)$. Let $\pi'$ be a minimal function that dominates
$\pi^*$: $\pi' \leq \pi^*$. From the proof of~(1) we know that $P(\pi^*) \subseteq P(\pi')$. Thus, $P(\pi)\subseteq P(\pi')$. By hypothesis,  we have that $P(\pi)=P(\pi^*)=P(\pi')$. Therefore, $\pi$ is a weak facet.\smallskip

(3) Direct consequence of (2) and \autoref{lemma:P_pi_and_E_pi}.
\end{proof}

By \autoref{lemma:cont_pwl_extreme_is_facet}, for continuous piecewise linear
functions, the notions of extreme functions and facets are the same.  Next we
discuss the relation to weak facets.  We have the following theorem.


\begin{theorem}\label{thm:implications-of-pwl-effective-perturbation}
Let $\mathcal{F}$ be a subfamily of the family $\mathcal F_4$ of continuous
piecewise linear functions 
such that
\begin{equation*}
  \begin{aligned}
    &\text{existence of an effective perturbation for any minimal valid $\pi
    \in \mathcal{F}$} \\
    &\text{implies existence of a piecewise linear effective perturbation}.
  \end{aligned}
\end{equation*}
Let $\pi \in \mathcal{F}$. The following are equivalent.
\textup{(E)} $\pi$ is extreme, \textup{(F)} $\pi$ is a facet, \textup{(wF)} $\pi$ is a weak facet.
\end{theorem}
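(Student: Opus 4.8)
The plan is to close a cycle of implications $\mathrm{(E)}\Rightarrow\mathrm{(F)}\Rightarrow\mathrm{(wF)}\Rightarrow\mathrm{(E)}$. Two of the three links are already available: since $\mathcal F\subseteq\mathcal F_4$, \autoref{lemma:cont_pwl_extreme_is_facet} gives $\mathrm{(E)}\Leftrightarrow\mathrm{(F)}$ for every $\pi\in\mathcal F$, and $\mathrm{(F)}\Rightarrow\mathrm{(wF)}$ is immediate from the definitions (if $P(\pi)\subseteq P(\pi')$ forces $\pi'=\pi$, then in particular $P(\pi')=P(\pi)$). All three properties require $\pi$ to be minimal valid (for weak facets this is \autoref{lemma:weak_facet_minimal}(1), and for facets and extreme functions it is part of the definition), so I may assume throughout that $\pi$ is minimal valid; otherwise all three fail and the equivalence is trivial. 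The only real content is therefore the missing link $\mathrm{(wF)}\Rightarrow\mathrm{(E)}$, which I would prove in contrapositive form: if $\pi$ is not extreme, then $\pi$ is not a weak facet.

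So suppose $\pi\in\mathcal F$ is minimal valid but not extreme. Then $\tilde\Pi^{\pi}(\R,\Z)\neq\{0\}$, and the defining hypothesis on $\mathcal F$ upgrades this to a nonzero \emph{piecewise linear} effective perturbation $\tilde\pi$, so that $\pi\pm\epsilon\tilde\pi$ are minimal valid for some $\epsilon>0$. Comparing the subadditivity slacks of $\pi^+$ and $\pi^-$ on $E(\pi)$ shows $\Delta\tilde\pi\equiv 0$ there, i.e.\ $\tilde\pi\in\bar\Pi^{E}(\R,\Z)$ with $E=E(\pi)$ and $E(\pi)\subseteq E(\tilde\pi)$. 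My goal, by \autoref{lemma:weak_facet_minimal}(3), is to produce a minimal valid $\pi'$ with $E(\pi)\subsetneq E(\pi')$. I would obtain it by a one-parameter ``push'' along $\tilde\pi$: set $\pi_t=\pi+t\tilde\pi$ and let $t^*=\sup\{\,t\ge 0 : \pi_t \text{ is minimal valid}\,\}$. Since $\tilde\pi\not\equiv 0$ satisfies the perturbation symmetry $\tilde\pi(x)+\tilde\pi(f-x)=0$, it is positive at some point $x_0$, and from minimality of $\pi+\epsilon\tilde\pi$ we get $\pi(x_0)<1$; the range constraint $\pi_t(x_0)\le 1$ then yields $t\le (1-\pi(x_0))/\tilde\pi(x_0)$, so $0<\epsilon\le t^*<\infty$. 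All conditions defining minimal validity (subadditivity, nonnegativity, symmetry, $\pi(0)=0$, periodicity) are closed in $t$ and $\tilde\pi$ is bounded, so the supremum is attained and $\pi^*:=\pi_{t^*}$ is minimal valid; moreover $E(\pi)\subseteq E(\pi^*)$ because $\Delta\pi^*=\Delta\pi+t^*\Delta\tilde\pi$ vanishes on $E(\pi)$.

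It remains to show this inclusion is strict, and here the maximality of $t^*$ combines with \autoref{lemma:lipschitz-equiv-perturbation}. Suppose instead $E(\pi^*)=E(\pi)$. Observe that $\pi^*$ is continuous piecewise linear over a common refinement $\P'$ of the complexes of $\pi$ and $\tilde\pi$, which is again locally finite, and that $\tilde\pi$ is continuous piecewise linear, hence piecewise Lipschitz over $\P'$. Since everything is continuous, \autoref{rem:e-bullet-continuous} gives $\bar\Pi^{E_\bullet}=\bar\Pi^{E}$, and from $E(\pi^*)=E(\pi)\subseteq E(\tilde\pi)$ we conclude $\tilde\pi\in\bar\Pi^{E_\bullet}(\R,\Z)$ for $E_\bullet=E_\bullet(\pi^*,\P')$. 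Then \autoref{lemma:lipschitz-equiv-perturbation}, applied with base function $\pi^*$ and perturbation $\tilde\pi$, yields $\tilde\pi\in\tilde\Pi^{\pi^*}(\R,\Z)$, so $\pi^*+\epsilon'\tilde\pi=\pi_{t^*+\epsilon'}$ is minimal valid for some $\epsilon'>0$, contradicting the maximality of $t^*$. Hence $E(\pi)\subsetneq E(\pi^*)$, and by \autoref{lemma:weak_facet_minimal}(3) $\pi$ is not a weak facet. This establishes $\mathrm{(wF)}\Rightarrow\mathrm{(E)}$ and closes the cycle.

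I expect the main obstacle to be precisely this strictness step. The naive hope that the given perturbation $\pi+\epsilon\tilde\pi$ already enlarges the additivity domain can fail, so the argument must travel to the extremal parameter $t^*$ and rule out the degenerate scenario in which the additivity domain stays constant all the way to the boundary of feasibility. The device that resolves this is the \emph{reusability of the same Lipschitz perturbation} $\tilde\pi$ at the endpoint $\pi^*$ through \autoref{lemma:lipschitz-equiv-perturbation}; a secondary point requiring care is showing $t^*$ is finite and attained, which rests on the symmetry relation forcing $\tilde\pi$ to be positive somewhere and on the closedness in $t$ of all the minimality conditions.
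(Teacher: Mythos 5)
Your proof is correct, and its skeleton coincides with the paper's: reduce everything to the implication (wF)$\Rightarrow$(E) using \autoref{lemma:cont_pwl_extreme_is_facet} and the containment $\{$facets$\}\subseteq\{$extreme$\}\cap\{$weak facets$\}$, take the piecewise linear effective perturbation supplied by the hypothesis on $\mathcal F$, push $\pi$ along it to a minimal valid $\pi'$ with $E(\pi)\subsetneq E(\pi')$, and conclude with \autoref{lemma:weak_facet_minimal}(3). Where you genuinely diverge is in how the strict enlargement of the additivity domain is obtained. The paper, after passing to the common refinement $\P$ and normalizing $\bar\pi$ so that $\Delta\bar\pi>0$ at some vertex, sets $\epsilon=\min\{\,\Delta\pi(x,y)/\Delta\bar\pi(x,y) \st (x,y)\in\verts(\Delta\P),\ \Delta\bar\pi(x,y)>0\,\}$; since the subadditivity slack of a continuous piecewise linear function is affine on each face of $\Delta\P$, this one finite ratio test both certifies subadditivity of $\pi'=\pi-\epsilon\bar\pi$ and exhibits an explicit vertex $(u,v)$ with $\Delta\pi'(u,v)=0$ but $(u,v)\notin E(\pi)$. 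You instead define $t^*$ as the supremum of the feasible push, prove finiteness and attainment by a closedness argument (using the symmetry relation for $\tilde\pi$ and the bound $\pi_t\le 1$), and rule out $E(\pi_{t^*})=E(\pi)$ by re-applying \autoref{lemma:lipschitz-equiv-perturbation} at $\pi_{t^*}$ to contradict maximality. Both arguments are sound; the paper's is more elementary and constructive (it never invokes \autoref{lemma:lipschitz-equiv-perturbation} in this proof and names the new additivity explicitly), whereas yours trades the vertex enumeration for a second, nontrivial use of the Lipschitz perturbation theorem and would still function in settings where subadditivity is not controlled by finitely many vertices per period.
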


Before proving the theorem, we discuss a hierarchy of known subfamilies that
satisfy the hypothesis.

\begin{remark}
  As shown in
  \cite{basu-hildebrand-koeppe:equivariant} (for a stronger statement, see
  \cite[Theorem~8.6]{igp_survey_part_2}), the family $\mathcal F_1$ of continuous piecewise
  linear functions with rational breakpoints is such a subfamily 
  where existence of an effective perturbation implies existence of a
  piecewise linear effective perturbation.
\end{remark}

\begin{remark}
  Zhou \cite[Chapter 4]{zhou:dissertation} introduces a completion procedure
  for deciding the extremality of piecewise linear functions, which is known
  to terminate for all functions with rational breakpoints and some functions
  with irrational breakpoints.  Let $\mathcal F_2 \supset \mathcal F_1$ be
  the family of continuous piecewise linear functions with rational
  breakpoints for which the procedure terminates. 
  In this case, by \cite[Lemma 4.11.3, Theorems 4.11.4,
  4.11.6]{zhou:dissertation}, the space of effective perturbations has a 
  precise description as a direct sum of a finite-dimensional space of 
  continuous piecewise linear functions and finitely many spaces of Lipschitz functions.  Because 
  the spaces of Lipschitz functions contain nonzero piecewise linear functions, 
  this implies that $\mathcal F_2$ satisfies the hypothesis of
  \autoref{thm:implications-of-pwl-effective-perturbation}.
\end{remark}

\begin{remark}
  Hildebrand--K\"oppe--Zhou \cite{hildebrand-koeppe-zhou:algo-paper-abstract-ipco,koeppe-zhou:algo-paper}
  consider the family $\mathcal F_3 \supseteq \mathcal F_2$ of
  continuous piecewise linear functions that have a \emph{finitely presented moves
    closure} \cite[Assumption 4.2]{hildebrand-koeppe-zhou:algo-paper-abstract-ipco}.  For these functions, by
  \cite[Theorems 4.14--4.16]{hildebrand-koeppe-zhou:algo-paper-abstract-ipco},
  the space of effective perturbations has a direct sum decomposition of the same type as
  for the family $\mathcal F_2$, and again this implies that the family
  $\mathcal F_3$ satisfies the hypothesis of 
  \autoref{thm:implications-of-pwl-effective-perturbation}.
\end{remark}
\begin{openquestion}
  It is an open question whether the whole family $\mathcal F_4 \supseteq
  \mathcal F_3$ of all continuous piecewise linear 
  functions satisfies the hypothesis of
  \autoref{thm:implications-of-pwl-effective-perturbation}.   
\end{openquestion}
\begin{proof}[of \autoref{thm:implications-of-pwl-effective-perturbation}]
By \autoref{lemma:cont_pwl_extreme_is_facet} and the fact that $\{$facets$\} \subseteq \{$extreme functions$\} \cap \{$weak facets$\}$, it suffices to show that $\{$weak facets$\} \subseteq \{$extreme functions$\}$.

Assume that $\pi$ is a weak facet, thus $\pi$ is minimal valid by \autoref{lemma:weak_facet_minimal}. We show that $\pi$ is extreme. For the sake of contradiction, suppose that $\pi$ is not extreme. By the assumption $\pi \in \mathcal{F}$, there exists a piecewise linear perturbation function $\bar\pi \not\equiv 0$ such that $\pi\pm\bar\pi$ are minimal valid functions. Furthermore, by \cite[Lemma 2.11]{igp_survey},  
we know that $\bar\pi$ is continuous, and $E(\pi)\subseteq E(\bar\pi)$. 
By taking the union of the breakpoints, 
we can define a common refinement, which will still be denoted by $\P$, of the complexes for $\pi$ and for $\bar\pi$. In other words, we may assume that $\pi$ and $\bar\pi$ are both continuous piecewise linear over $\P$.  
Since $\Delta\bar\pi\not\equiv0$, we may assume without loss of generality that 
$\Delta\bar\pi(x,y) > 0$ for some $(x,y)\in \verts(\Delta\P)$.
Define \[\epsilon = \min\left\lbrace \frac{\Delta\pi(x,y)}{\Delta\bar\pi(x,y)}\Bigst (x,y)\in \verts(\Delta\P),\;  \Delta\bar\pi(x,y) > 0\right\rbrace.\]
Notice that $\epsilon >0$, since $\Delta\pi \geq 0$ and $E(\pi)\subseteq E(\bar\pi)$.
Let $\pi'=\pi - \epsilon\bar\pi$. Then $\pi'$ is a bounded continuous function piecewise linear over $\P$, such that $\pi' \neq \pi$. 

The function $\pi'$ is subadditive, since $\Delta\pi'(x,y) \geq 0$ for each $(x,y)\in \verts(\Delta\P)$. As in the proof of \autoref{lemma:lipschitz-equiv-perturbation}, it can be shown that $\pi'$ is non-negative, $\pi'(0)=0$, $\pi'(f)=1$, and that $\pi'$ satisfies the symmetry condition. Therefore, $\pi'$ is a minimal valid function. Let $(u,v)$ be a vertex of $\Delta\P$ satisfying $\Delta\bar\pi(u,v) > 0$ and $\Delta\pi(u,v) = \epsilon\Delta\bar\pi(u,v)$. We know that $\Delta\pi'(u,v)=\Delta\pi(u,v)-\epsilon\Delta\bar\pi(u,v)=0$, hence $(u,v)\in E(\pi')$. However, $(u,v)\not\in E(\pi)$, since $\Delta\bar\pi(u,v)> 0$ implies that $\Delta\pi(u,v)\neq 0$.  Therefore, $E(\pi)\subsetneq E(\pi')$. By \autoref{lemma:weak_facet_minimal}(3), we have that $\pi$ is not a weak facet, a contradiction.
\end{proof}

\begin{remark}
  The theorem is stated for functions $\pi$ and $\bar\pi$ that are piecewise
  over the same complex~$\mathcal P$.  This is not a restriction because if we
  are given two complexes $\mathcal P$ and $\mathcal{\bar P}$, then we can
  define a new complex, the \emph{common refinement} of $\mathcal P$ and
  $\mathcal{\bar P}$, whose set of vertices is the union of those of
  $\mathcal P$ and $\mathcal{\bar P}$.
\end{remark}

\section{Separation of the notions in the discontinuous case}
\label{s:discontinuous_examples}

\subsection{Extreme, but not a weak facet}

The definitions of facets and weak facets fail to account for
additivities-in-the-limit, which are a crucial feature of the extremality test
for discontinuous functions.  
This allows us to separate the notion of extreme functions from the other two
notions.  Below we do this by observing that the discontinuous piecewise linear
extreme function $\psi=\sagefunc{hildebrand_discont_3_slope_1}()$, which appeared
above in \autoref{ex:hildebrand_discont_3_slope_1}, works as a separating
example.  

\begin{theorem}
\label{lemma:discontinuous_examples_1}
The function $\psi= \sage{hildebrand\_discont\_3\_slope\_1()}$ 
is a one-sided discontinuous piecewise linear function with rational
breakpoints that is extreme, but is neither a weak facet nor a facet.
\end{theorem}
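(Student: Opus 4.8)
The plan is to prove the three claims separately, reading the piecewise linear, one-sided discontinuous, rational-breakpoint structure directly from \autoref{ex:hildebrand_discont_3_slope_1}. The conceptual lever throughout is the mismatch, for discontinuous $\pi$, between the two additivity records of \autoref{sec:notations}: extremality is controlled by the \emph{limit}-additivity family $E_{\bullet}(\psi,\P)$ (every effective perturbation lies in $\bar\Pi^{E_{\bullet}}(\R,\Z)$), whereas the facet and weak-facet notions see only the pointwise additivity domain $E(\psi)$, through the set $P(\psi)$. By \autoref{rem:e-bullet-continuous} one has $\bar\Pi^{E_{\bullet}}(\R,\Z)\subseteq\bar\Pi^{E}(\R,\Z)$ in general, and for the discontinuous $\psi$ this inclusion is strict; I would exploit precisely that slack.

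For extremality I would establish $\tilde\Pi^{\psi}(\R,\Z)=\{0\}$. This is the computer-assisted ingredient: either cite that $\psi$ is a catalogued extreme function (Hildebrand's construction, available as \sagefunc{hildebrand_discont_3_slope_1}), or run \sagefunc{extremality_test}, which for this rational-breakpoint function reduces to checking that no nonzero piecewise linear $\bar\pi\in\bar\Pi^{E_{\bullet}}(\R,\Z)$ with $E_{\bullet}=E_{\bullet}(\psi,\P)$ and $\bar\pi(0)=\bar\pi(f)=0$ exists; equivalently, no nonzero $\bar\pi$ makes both $\psi\pm\epsilon\bar\pi$ minimal valid.

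The heart of the argument is to show $\psi$ is \emph{not} a weak facet. By \autoref{lemma:weak_facet_minimal}(3) it suffices to produce a minimal valid $\pi'$ with $E(\psi)\subsetneq E(\pi')$. I would take $\pi'=\psi+\bar\pi$ for a nonzero piecewise linear $\bar\pi\in\bar\Pi^{E}(\R,\Z)$, $E=E(\psi)$, chosen deliberately \emph{not} in $\bar\Pi^{E_{\bullet}}(\R,\Z)$: it matches every pointwise additivity of $\psi$ but violates some additivity-in-the-limit at a discontinuity, which is exactly the freedom that $E$, unlike $E_{\bullet}$, leaves open. Concretely, reading the two-dimensional complex $\Delta\P$ of \autoref{fig:simple_E_pi_extreme_not_facet}, one locates a vertex $(u,v)\in\verts(\Delta\P)$ at which $\Delta\psi(u,v)>0$ (a jump) while the in-the-limit value $\Delta\psi_F(u,v)=0$ on some face $F\ni(u,v)$, and arranges $\Delta\pi'(u,v)=0$. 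Verifying that $\pi'$ is subadditive, nonnegative, normalized ($\pi'(0)=0$, $\pi'(f)=1$), and symmetric --- hence minimal valid by the Gomory--Johnson criterion --- is the computer-assisted bookkeeping over $\verts(\Delta\P)$, carried out as in the subadditivity estimates of the proof of \autoref{lemma:lipschitz-equiv-perturbation}. Then $(u,v)\in E(\pi')\setminus E(\psi)$ yields $E(\psi)\subsetneq E(\pi')$, so \autoref{lemma:weak_facet_minimal}(3) shows $\psi$ is not a weak facet.

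Finally, since every facet is a weak facet (facets lie in the intersection of extreme functions and weak facets; see \autoref{fig:separation}), the preceding step shows immediately that $\psi$ is not a facet. The main obstacle is the middle construction: exhibiting a perturbation admissible for $E(\psi)$ but inadmissible for $E_{\bullet}(\psi,\P)$ whose \emph{one-sided} addition keeps $\psi+\bar\pi$ minimal. There is no tension with extremality, and this is the essence of the separation: extremality only forbids a nonzero $\bar\pi$ for which \emph{both} $\psi\pm\epsilon\bar\pi$ are minimal, whereas here only the single function $\psi+\bar\pi$ need be minimal; the opposite perturbation must fail (in subadditivity or nonnegativity), and it is precisely the one-sidedness of the discontinuity that permits this asymmetry.
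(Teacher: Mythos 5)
Your overall strategy coincides with the paper's: cite the known extremality of $\psi$, then produce a minimal valid $\pi'$ with $E(\psi)\subsetneq E(\pi')$ and invoke \autoref{lemma:weak_facet_minimal}(3); the point, as you say, is that the perturbation $\bar\pi=\pi'-\psi$ lies in $\bar\Pi^{E}(\R,\Z)$ but not in $\bar\Pi^{E_{\bullet}}(\R,\Z)$, so only the one-sided sum $\psi+\bar\pi$ need be minimal and extremality is not contradicted. Your diagnosis of why the separation is possible (pointwise versus limit additivities, one-sidedness of the discontinuity) matches the paper's remark following its proof. Deducing ``not a facet'' from ``not a weak facet'' via the known inclusion of facets in weak facets is also fine.

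The gap is that you never actually exhibit the witness. By \autoref{lemma:weak_facet_minimal}(3), the existence of a minimal valid $\pi'$ with $E(\psi)\subsetneq E(\pi')$ is exactly equivalent to $\psi$ failing to be a weak facet, so asserting that one can ``locate a vertex \dots and arrange $\Delta\pi'(u,v)=0$'' and that the remaining verification is ``bookkeeping'' assumes the conclusion. Such a $\pi'$ does not exist for a generic discontinuous extreme function (otherwise no discontinuous extreme function could ever be a weak facet --- and the paper's later example $\pilifted$ is one), so the argument must rest on a concrete feature of $\psi$, not on the general mechanism. The paper supplies the witness explicitly: $\psi'(x)=2x$ for $x\in[0,\tfrac12]$ and $\psi'(x)=\psi(x)$ for $x\in(\tfrac12,1)$, a function piecewise linear on the same complex~$\mathcal P$ for which minimality and the strict inclusion $E(\psi)\subsetneq E(\psi')$ are directly checkable. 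To complete your proof you would need to produce this (or another) explicit $\pi'$ and actually carry out the subadditivity and symmetry verification; everything else in your outline is sound.
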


\begin{figure}[tp]
\centering
\begin{minipage}{.49\textwidth}
\centering
\includegraphics[width=\linewidth]{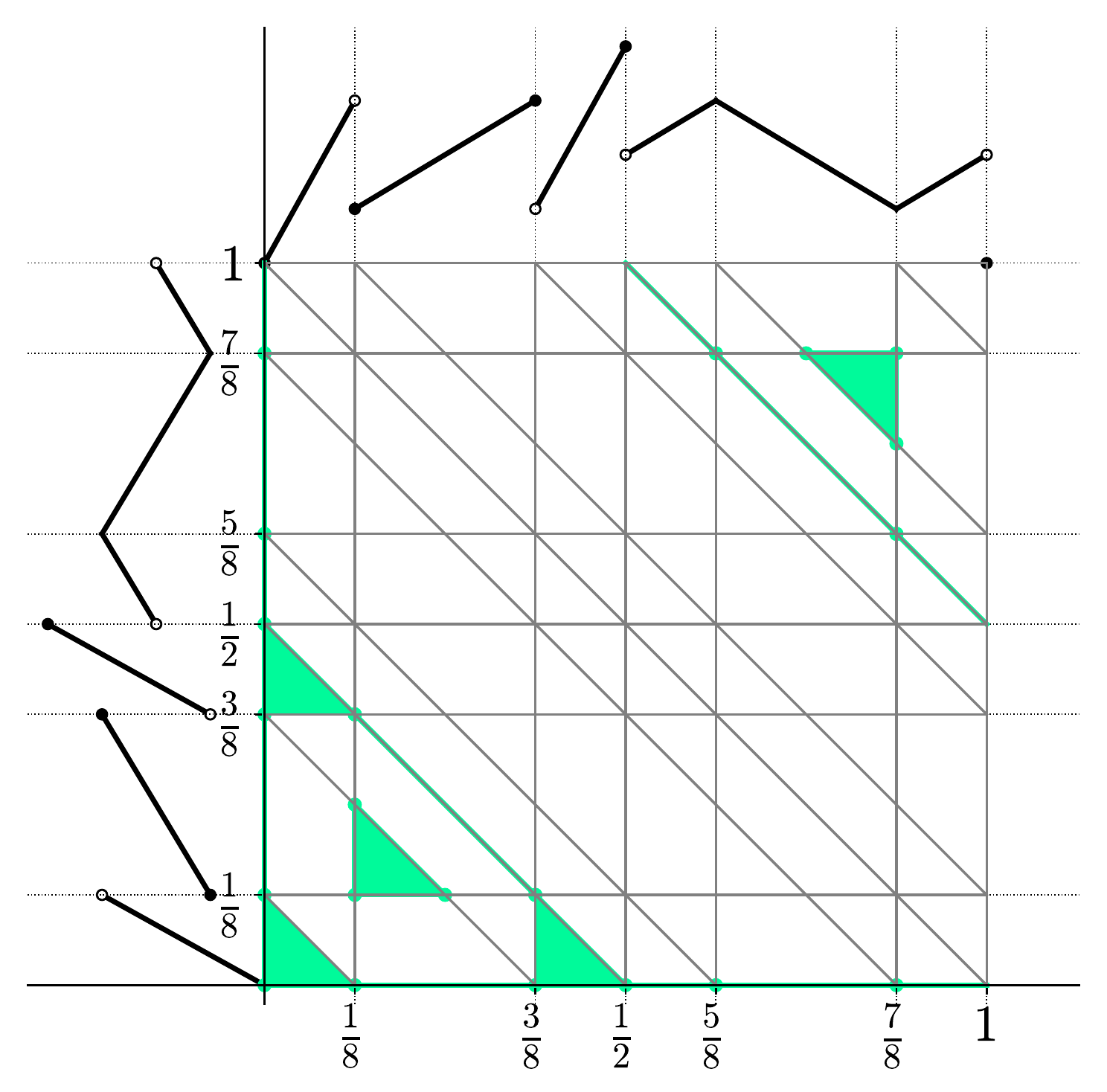}
\end{minipage}
\begin{minipage}{.49\textwidth}
\centering
\includegraphics[width=\linewidth]{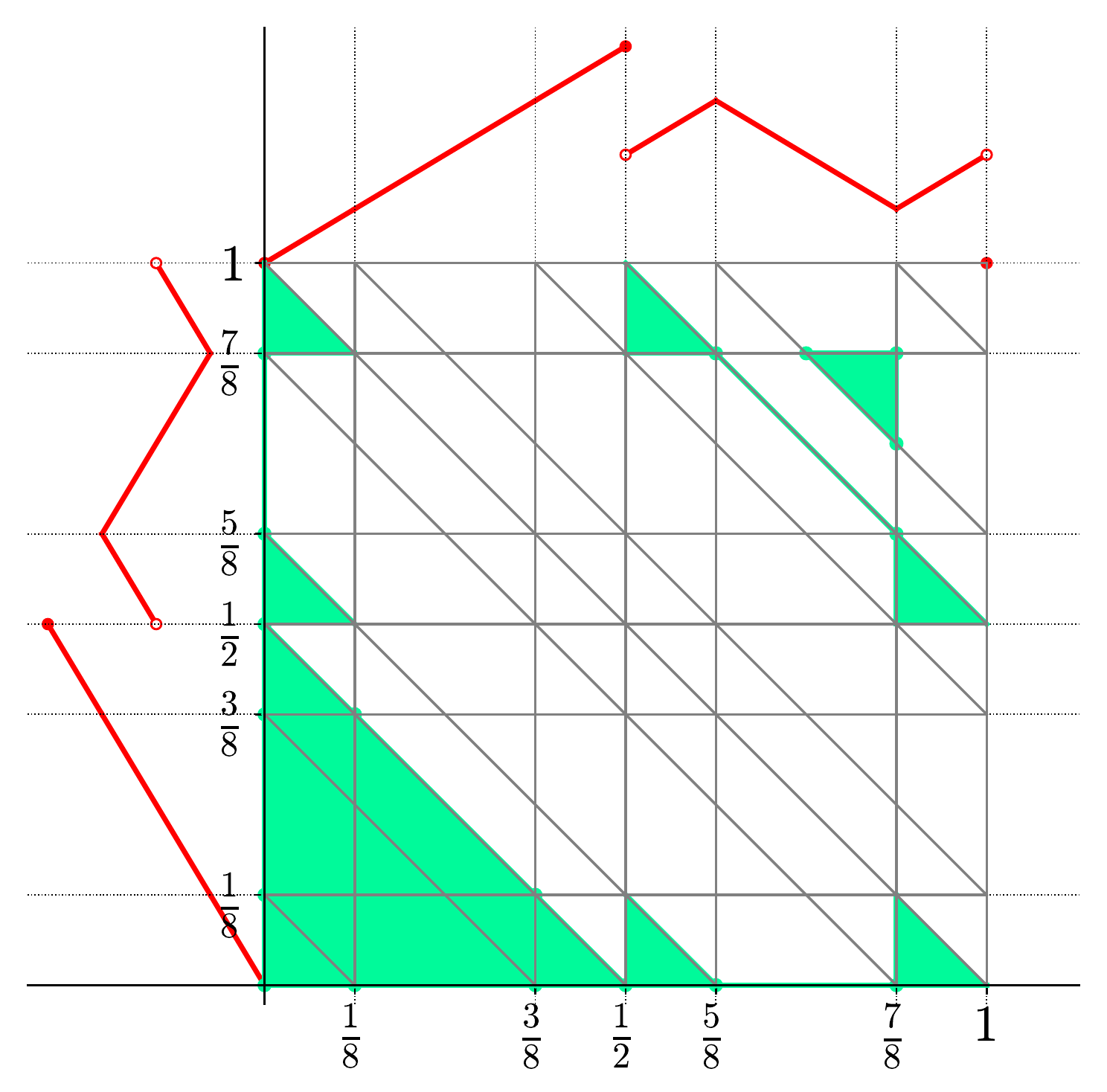}
\end{minipage}
\caption{Two diagrams of functions \sage{h} (\textit{graphs on the top and the left}) and polyhedral complexes $\Delta\P$ (\textit{gray solid lines}) with additive domains $E(\sage{h})$ (\textit{shaded in green}).
(\textit{Left,
black graph})
$\sage{h = hildebrand\_discont\_3\_slope\_1()}=\psi$. (\textit{Right, red graph}) \sage{h}
$=\psi'$ from the proof of \autoref{lemma:discontinuous_examples_1}.}
\label{fig:simple_E_pi_extreme_not_facet}
\end{figure}

\begin{figure}[tp]
\centering
\begin{minipage}{.49\textwidth}
\centering
\includegraphics[width=\linewidth]{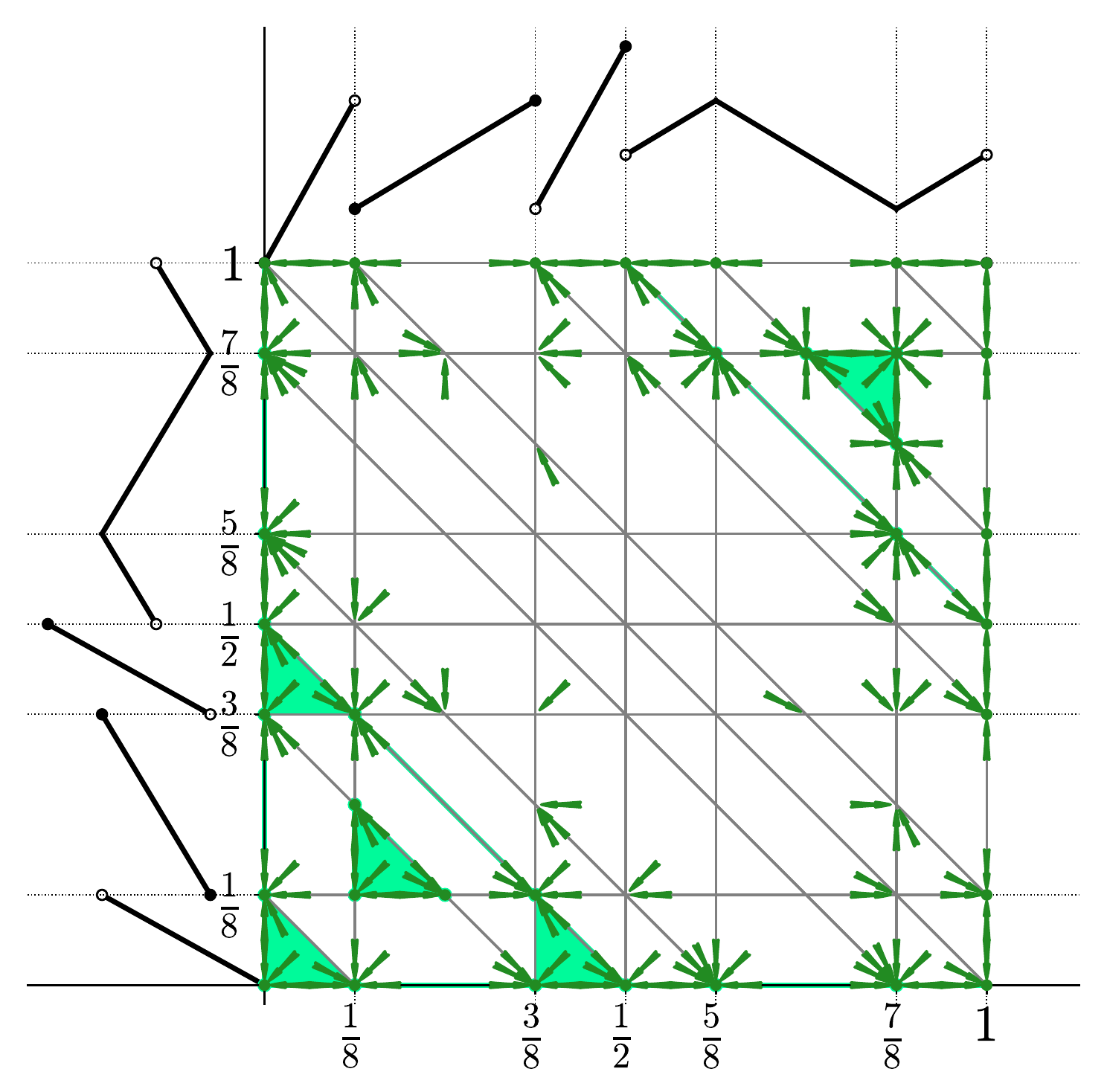}
\end{minipage}
\begin{minipage}{.49\textwidth}
\centering
\includegraphics[width=\linewidth]{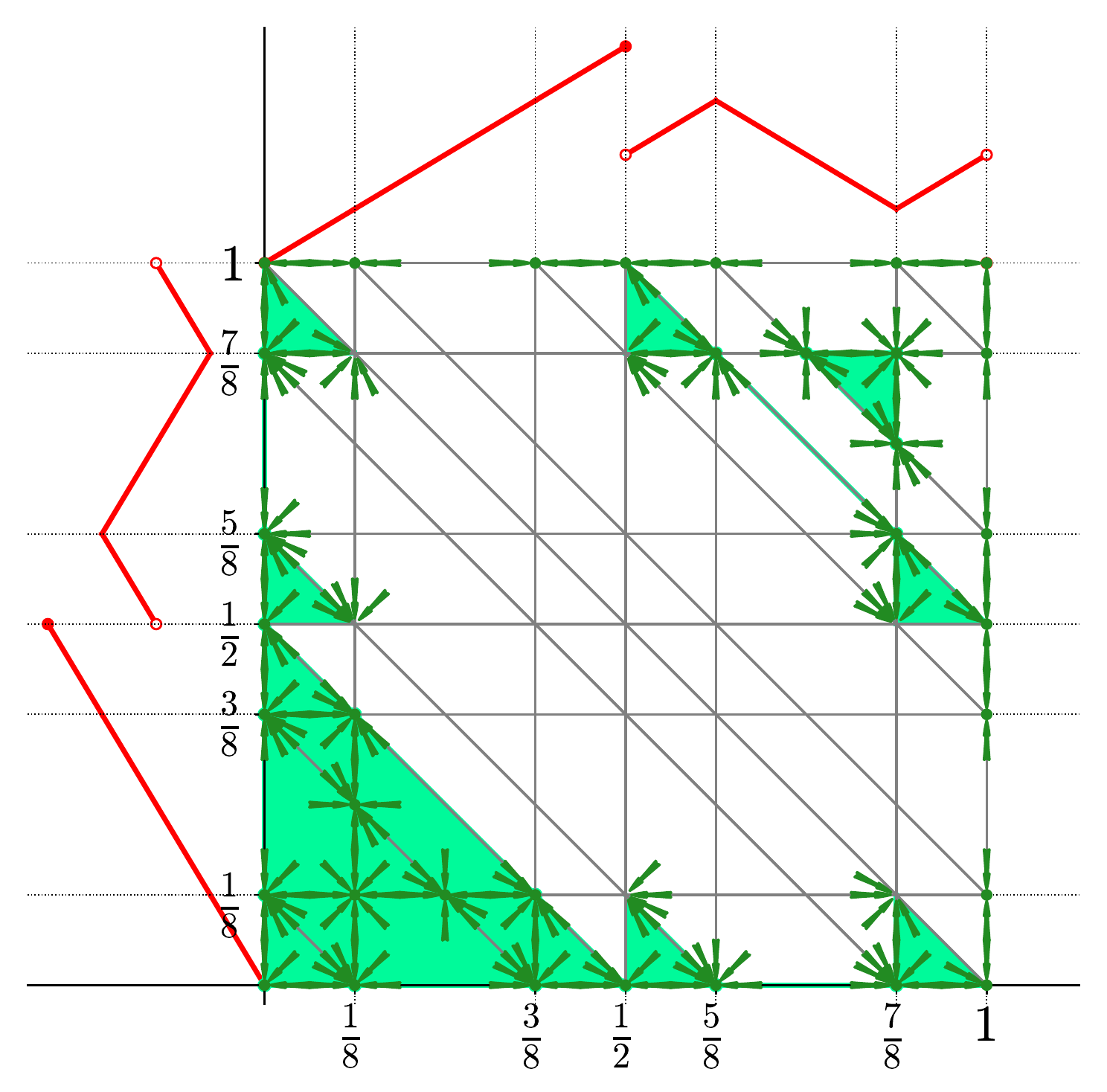}
\end{minipage}
\caption{Two diagrams of functions \sage{h} (\textit{graphs on the top
    and the left borders}) and polyhedral complexes $\Delta\P$ (\textit{gray solid
    lines}) with additive domains $E(\sage{h})$ (\textit{green triangles})
  and $E_\bullet(\sage{h}, \P)$ (\textit{green \iftrue arrows\else sectors\fi}).
  (\textit{Left})
  $\sage{h = hildebrand\_discont\_3\_slope\_1()}=\psi$. (\textit{Right})
  \sage{h = discontinuous\_facets\_paper\_example\_psi\_prime()}
  $=\psi'$ from the proof of \autoref{lemma:discontinuous_examples_1}.}
\label{fig:simple_E_pi_extreme_not_facet_with_cones}
\end{figure}

\begin{proof}
The function $\psi= \sage{hildebrand\_discont\_3\_slope\_1()}$ is extreme
(Hildebrand, 2013, unpublished, reported in \cite{igp_survey}).  
The extremality proof appears as \cite[Example 7.2]{hong-koeppe-zhou:software-paper};
it can also be verified using the software
\cite{cutgeneratingfunctionology:lastest-release}.%
\footnote{The command \sage{h =
    hildebrand\_discont\_3\_slope\_1(); extremality\_test(h)}
  carries out the verification.}
The function~$\psi$ is piecewise linear on a complex~$\P$, which is illustrated in
\autoref{fig:simple_E_pi_extreme_not_facet} (left). 
Consider the minimal valid function $\psi' =
\sage{discontinuous\_facets\_paper\_example\_psi\_prime}()$ defined by 
\[
\psi'(x)= 
      \begin{cases} 
        2x & \text{if } x \in [0, \frac12]; \\
        \psi(x) & \text{if } x \in (\frac12, 1).
      \end{cases}
\]
It can be considered as piecewise linear on the same complex~$\mathcal P$. 
Observe that $E(\psi)$ is a strict subset of $E(\psi')$. 
See \autoref{fig:simple_E_pi_extreme_not_facet} for an illustration of this
inclusion.
Thus, by \autoref{lemma:weak_facet_minimal}(3), the function $\psi$
is not a weak facet (nor a facet).
\end{proof}

We remark that there is no inclusion relation between the limit-additivities captured in the
set families $E_\bullet(\psi, \mathcal P)$ and $E_\bullet(\psi', \mathcal P)$, as
illustrated in the diagrams in \autoref{fig:simple_E_pi_extreme_not_facet_with_cones}.
We will explain these diagrams on an example only; see 
\cite{hong-koeppe-zhou:software-paper}, where these types of diagrams for
discontinuous piecewise linear functions were introduced,
for a full discussion.\footnote{The graphs in
  \autoref{fig:simple_E_pi_extreme_not_facet} can be reproduced with the
  command
  \sage{plot\underscore{}2d\underscore{}diagram\underscore{}additive\underscore{}domain\underscore{}sans\underscore{}limits(h)},
  those in \autoref{fig:simple_E_pi_extreme_not_facet_with_cones} with the
  command \sage{plot\underscore{}2d\underscore{}diagram\underscore{}additive\underscore{}domain\underscore{}sans\underscore{}limits(h)
    + plot\underscore{}2d\underscore{}diagram\underscore{}with\underscore{}cones(h)}.}
Consider $(\frac38,\frac38)$ as a vertex of the face~$F\in\Delta\P$ that is
the triangle to the northeast of it. The limit of $\Delta\psi$ within~$\relint(F)$ to
$(\frac38,\frac38)$ is $\lim_{(x,y)\to(\frac38^+,\frac38^+)} \Delta\psi(x,y) = 0$,
thus we have additivity in the limit. 
This is indicated by the 
\iftrue
green arrow from the
\else
green sector 
\fi
northeast of $(\frac38,\frac38)$.
But the corresponding limit of $\Delta\psi'$ is $
\lim_{(x,y)\to(\frac38^+,\frac38^+)} \Delta\psi'(x,y) > 0$.
As a result, the perturbation $\bar\psi = \psi' - \psi$ is not an effective
perturbation for~$\psi$:  For any $\epsilon>0$, the function $\psi - \epsilon \bar\psi$
violates subadditivity near $(\frac38,\frac38)$.  In fact, $\bar\psi$ does not
belong to the space $\bar{\Pi}^{E_{\bullet}}(\R,\Z)$ with $E_{\bullet} =
E_{\bullet}(\psi, \P)$.

\subsection{Weak facet, but not extreme}

The other separations appear to require more complicated
constructions.  Recently in \cite{koeppe-zhou:crazy-perturbation}, the authors
constructed a two-sided discontinuous 
piecewise linear minimal valid function,
$\pi = \sage{kzh\_minimal\_has\_only\_crazy\_perturbation\_1()}$, which is not extreme,
but which is not a convex combination of other piecewise linear minimal valid
functions; see \autoref{tab:kzh_minimal_has_only_crazy_perturbation_1} in
Appendix~\ref{s:kzh_minimal_has_only_crazy_perturbation_1} for the definition 
and \autoref{fig:complex_nf} for a graph.

This function has 40 breakpoints $0=x_0<x_1<\dots<x_{39}<x_{40}=1$ within $[0,1]$.
It has two \emph{special intervals} $(l, u) = (x_{17}, x_{18})$
and $(f-u, f-l) = (x_{19}, x_{20})$, where $f = x_{37} = \frac{4}{5}$, $l=\frac{219}{800}$,
$u=\frac{269}{800}$, on which every nonzero perturbation is
\emph{microperiodic}, 
namely invariant under the action of the dense additive group $T = \langle t_1,
t_2 \rangle_{\Z}$, where $t_1 = a_1 - a_0 = x_{10} - x_6 = \frac{77}{7752}\sqrt{2}$ and $t_2 = a_2 - a_0 = x_{13} - x_6 = \frac{77}{2584}$. 
Below we prove that it furnishes another separation.  

\begin{theorem}
\label{lemma:discontinuous_examples_2}
The function $\pi=
  \sage{kzh\_minimal\_has\_only\_crazy\_perturbation\_1()}$ 
  is a two-sided discontinuous piecewise linear function (with some irrational
  breakpoints) 
  that is not extreme (nor a facet), but is a weak facet.
\end{theorem}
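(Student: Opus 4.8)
The plan is to prove the two assertions separately, importing as much as possible from \cite{koeppe-zhou:crazy-perturbation}. The non-extremality (and hence, since facets are extreme by \cite[Lemma 1.3]{basu-hildebrand-koeppe-molinaro:k+1-slope}, the non-facetness) is essentially a restatement of the main construction: $\pi$ was built in \cite{koeppe-zhou:crazy-perturbation} precisely so that its effective perturbation space $\tilde\Pi^{\pi}(\R,\Z)$ is nonzero, spanned by \emph{crazy} perturbations that are microperiodic with respect to the dense group $T=\langle t_1,t_2\rangle_{\Z}$ on the two special intervals and vanish elsewhere. By \cite[Lemma 2.11(i)]{igp_survey}, $\tilde\Pi^{\pi}(\R,\Z)\neq\{0\}$ is exactly the statement that $\pi$ is not extreme.

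The real content is to show that $\pi$ is a weak facet. By \autoref{lemma:weak_facet_minimal}(3) it suffices to prove that every minimal valid $\pi'$ with $E(\pi)\subseteq E(\pi')$ in fact satisfies $E(\pi)=E(\pi')$. So I would fix such a $\pi'$ and set $\bar\pi=\pi'-\pi$. Since both functions are minimal valid, $\bar\pi$ is bounded, periodic modulo~$1$, satisfies $\bar\pi(0)=\bar\pi(f)=0$, and $E(\pi)\subseteq E(\pi')$ gives $\Delta\bar\pi=0$ on $E(\pi)$; thus $\bar\pi\in\bar\Pi^{E}(\R,\Z)$. The goal is to show that $\bar\pi$ creates no new pointwise additivity, i.e.\ $E(\pi')=E(\pi)$.

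First I would recover the rigidity of $\pi$ away from the special intervals. The covering and Interval-Lemma analysis of \cite{koeppe-zhou:crazy-perturbation} is driven by the two-dimensional additive faces $F$ of $\Delta\P$: on the relative interior of each such face $\Delta\pi\equiv0$, so $\relint(F)\subseteq E(\pi)\subseteq E(\pi')$ and therefore $\Delta\bar\pi\equiv0$ there as well. This is \emph{pointwise} additivity on a full-dimensional set, which is already encoded in $\bar\Pi^{E}$ and does not require the stronger limit-additivities of $\bar\Pi^{E_\bullet}$. Consequently the same argument applies to our $\bar\pi$: the Interval Lemma together with $\bar\pi(0)=\bar\pi(f)=0$ and the symmetry relation forces $\bar\pi$ to vanish outside the special intervals and their translates and to be $T$-microperiodic on them. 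In other words, $\pi'=\pi$ except possibly on the special intervals, where $\pi'=\pi+\bar\pi$ with $\bar\pi$ invariant under~$T$.

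The main obstacle is the final step: showing that no such $\bar\pi$ can enlarge $E(\pi)$. Suppose, for contradiction, that $(x_0,y_0)\in E(\pi')\setminus E(\pi)$; then $\Delta\bar\pi(x_0,y_0)=-\Delta\pi(x_0,y_0)<0$, so at least one of $x_0,y_0,x_0+y_0$ lies in a special interval. The difficulty is that $\bar\pi$ is genuinely non--piecewise linear there, so one cannot simply test vertices of $\Delta\P$. Instead I would exploit $T$-microperiodicity: for $t\in T$ small enough to keep the relevant coordinate inside its special interval (and the other two coordinates off the special intervals), $\bar\pi$ is unchanged while $\Delta\pi$ varies affinely, so subadditivity of $\pi'$ along the dense, sign-symmetric set of such $t$ forces the directional slope of $\Delta\pi$ on the face to vanish, and pins the required value of $\bar\pi$ to $-\Delta\pi(x_0,y_0)<0$. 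This reduces the problem to a finite check over the finitely many faces of $\Delta\P$ (by local finiteness and periodicity) whose projections meet a special interval, comparing the known slopes and values of $\Delta\pi$ against the admissible range of the microperiodic $\bar\pi$; this is the computer-assisted part of the argument. Carrying it out should show $\Delta\pi'(x_0,y_0)>0$ strictly, a contradiction, whence $E(\pi')=E(\pi)$ and $\pi$ is a weak facet.
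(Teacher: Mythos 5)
Your overall architecture matches the paper's: non-extremality is imported from \cite{koeppe-zhou:crazy-perturbation}; the weak-facet claim is reduced via \autoref{lemma:weak_facet_minimal}(3) to showing $E(\pi)\subseteq E(\pi')$ implies equality; and you correctly identify the key technical point that only \emph{pointwise} additivities on relative interiors of faces are inherited by $\bar\pi=\pi'-\pi$ (so the argument must run inside $\bar\Pi^{E}$, not $\bar\Pi^{E_\bullet}$), which is exactly why the paper re-does the Interval-Lemma/finite-linear-system analysis to get $\bar\pi\equiv 0$ off the special intervals and $T$-microperiodicity on them. Up to that point your proposal is essentially the paper's proof.

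The final step has a genuine gap. Your plan is: at a hypothetical new additivity point $(x_0,y_0)$, translate the special-interval coordinate by small $t\in T$ (both signs, by density), use microperiodicity to keep $\Delta\bar\pi$ fixed while $\Delta\pi_F$ moves affinely, conclude the relevant directional slope of $\Delta\pi_F$ vanishes, and then ``compare against the admissible range of the microperiodic $\bar\pi$.'' But you never establish what that admissible range is, and without a quantitative bound the comparison is vacuous: microperiodicity, symmetry, and the inherited additivities alone do not bound $\bar\pi$ from below on $(l,u)$. This is precisely the paper's condition~(\ref{li:bar-pi-abs-leq-s-on-special}), $|\bar\pi|\le s$ with $s=\pi(x_{39}^-)+\pi(1+l-x_{39})-\pi(l)$, and it requires a separate, bespoke subadditivity argument: assuming $\bar\pi(\bar x)<-s$, one uses density of $T$ to move $\bar x$ to a point $x$ arbitrarily close to $1+l-x_{39}$ and pairs it with $y=1+l-x$ to violate subadditivity of $\pi'$. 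Nothing in your sketch produces this bound. Moreover, your translation argument gives no information on the $n_F=2$ faces (e.g.\ $F((l,u),(l,u),\{x_{23}\})$), where the admissible direction moves two microperiodic coordinates simultaneously and the rate of change of $\Delta\pi_F$ is automatically $c_2-c_2=0$; there the only way to exclude $\bar\pi(x)+\bar\pi(y)=-\Delta\pi_F(x,y)$ is the bound $|\bar\pi|\le s$ combined with the computational fact that $\Delta\pi_F> n_F\cdot s$ on all non-additive faces with $n_F>0$. So the finite check you propose is the right last move, but it has nothing to check against until the bound $|\bar\pi|\le s$ is proved.
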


\begin{proof}
  By
  \cite[{Theorem~\ref{crazy:th:kzh_minimal_has_only_crazy_perturbation_1}}]{koeppe-zhou:crazy-perturbation},
  we know that the function $\pi$ is minimal valid, but is not extreme. Let $\pi'$ be a minimal valid function such that $E(\pi) \subseteq E(\pi')$. We want to show that $E(\pi)= E(\pi')$. Consider $\bar\pi = \pi' - \pi$, which is a bounded $\Z$-periodic function satisfying that $E(\pi) \subseteq E(\bar\pi)$. As a difference of minimal valid functions, it satisfies the symmetry condition \begin{equation}
    \label{eq:bar-pi-symmetry-on-special}
    \bar\pi(x)+\bar\pi(y)=0 \quad\text{for $x, y\in\R$ such that $x+y = f$}
\end{equation}
as well as the conditions 
\begin{equation}
  \bar\pi(0)=\bar\pi(\tfrac{f}2)=\bar\pi(f)=\bar\pi(\tfrac{1+f}2) =\bar\pi(1)=0.
  \label{eq:fixed-to-zero}
\end{equation}
  We reuse parts of the proof of
  \cite[{Theorem~\ref{crazy:th:kzh_minimal_has_only_crazy_perturbation_1}}, Part
  (ii)]{koeppe-zhou:crazy-perturbation}, applying it to the
  perturbation~$\bar\pi$.


\smallbreak

  \DIFFPROTECT{First, as in the proof of \cite[{Theorem~\ref{crazy:th:kzh_minimal_has_only_crazy_perturbation_1}}, Part
  (ii)]{koeppe-zhou:crazy-perturbation}, we prove the following claim:
  \begin{enumerate}[\quad(i)]\setcounter{enumi}{-1}
  \item[\quad(o)]\label{bar-pi-pwl-outside-special}
    The function $\bar\pi$ is piecewise linear on~$\P$ outside of the
    special intervals, with unknown slopes $\bar c_1, \bar c_3\in\R$ on all
    intervals where $\pi$ has slopes $c_1$ and~$c_3$, respectively.
  \end{enumerate}
  See \autoref{tab:kzh_minimal_has_only_crazy_perturbation_1} for a list of the
  intervals. We reuse the computer-assisted proof in
  \cite[Appendix~C]{koeppe-zhou:crazy-perturbation} to prove Claim~(o).
  Because $E(\pi) \subseteq E(\bar\pi)$, if a two-dimensional face $F\in\Delta\P$
  satisfies 
  \begin{equation}
    \Delta\pi(x, y) = 0 \quad\text{for $(x,y)\in\relint(F)$,}
    \label{eq:additivity-sans-limits}
  \end{equation}
  then we also have
  \begin{equation}
    \Delta\bar\pi(x, y) = 0 \quad\text{for $(x,y)\in\relint(F).$}
    \label{eq:additivity-sans-limits-bar-pi}
  \end{equation}
  \autoref{tab:kzh_minimal_has_only_crazy_perturbation_1_faces_used_dim_2} shows a list of faces $F \in \Delta\P$ with this propety.
  Our proof repeatedly applies the Gomory--Johnson Interval Lemma
  in the form of \cite[Theorem 4.3]{igp_survey}
  to these faces.  (This version of the theorem only requires
  the boundedness of the function~$\bar\pi$; this is contrast to the proof in
  \cite{koeppe-zhou:crazy-perturbation}. The latter uses a version that is stated for 
  effective perturbations only.) 
  By the theorem, $\bar\pi$ is affine linear with the same slope on the open
  intervals $\intr(p_i(F))$ for $i=1,2,3$.

  Then the proof considers the edges $F \in \Delta\P$ that satisfy 
  \eqref{eq:additivity-sans-limits}
  shown in \autoref{tab:kzh_minimal_has_only_crazy_perturbation_1_faces_used_dim_1}.
  Let $\{i,j\} \subset \{1,2,3\}$ such that $p_i(F)$ and $p_j(F)$ are proper intervals. 
  Let $L \subseteq F$ be a line segment such that $\bar\pi$ is affine linear on $p_i(L)$.
  Then by \eqref{eq:additivity-sans-limits-bar-pi}, $\bar\pi$ is also affine linear with the same slope on $p_j(L)$. 
  (There is another difference to the proof in \cite{koeppe-zhou:crazy-perturbation}: 
  property \eqref{eq:additivity-sans-limits} is more
  specific than the hypothesis of \cite[Theorem
  3.3]{koeppe-zhou:crazy-perturbation}.  The latter
  only requires limit-additivities
  $ \Delta\pi_{F'}(x, y) = 0$ for $(x,y)\in\relint(F)$
  where $F' \supseteq F$ is an enclosing face.
  This distinction is crucial because we
  have no control over the limit-additivities of $\bar\pi$.)}
\smallbreak

\DIFFPROTECT{Next we establish the following stronger claim:
  \begin{enumerate}[\quad(i)]\setcounter{enumi}{0}
  \item\label{li:bar-pi-zero-outside-special}
    We have $\bar\pi(x)=0$ for $x \not\in (l, u) \cup (f-u, f-l)$. 
  \end{enumerate}
  Our proof is again similar to the one of
  \cite[{Theorem~\ref{crazy:th:kzh_minimal_has_only_crazy_perturbation_1}},
  Part (ii)]{koeppe-zhou:crazy-perturbation},
  but in contrast to that, 
  we consider only the restriction of~$\bar\pi$ to 
  $$[0,l] \cup [u, f-u] \cup [f-l, 1],$$ 
  where the function is piecewise linear by
  (o)
  .  The restricted function is determined by a finite system of
  parameters as follows: two slope parameters $\bar c_1$ and $\bar c_3$,
  19~parameters that determine the function value $\bar\pi(x_i)$ at each
  breakpoint, and 18~parameters that determine the midpoint function value
  $\bar\pi(\frac{x_i+x_{i+1}}2)$ on each interval of~$\P$ except for the special intervals.  (Here we used the
  symmetry condition~\eqref{eq:bar-pi-symmetry-on-special}, as well as the conditions
  \eqref{eq:fixed-to-zero} to reduce the number of parameters.)}
\DIFFPROTECT{We set up a finite linear system of equations that expresses the
  additivity relations \eqref{eq:additivity-sans-limits-bar-pi} for faces $F$
  that satisfy \eqref{eq:additivity-sans-limits}.  
  We do this by writing equations
  $\Delta\bar\pi_F(x, y) = 0$ for $(x, y) \in \verts(F)$ for these faces~$F$.
  The system has full rank; a regular $39\times 39$ subsystem 
  is shown in
  Tables~\ref{tab:kzh_minimal_has_only_crazy_perturbation_1_faces_of_vertices_used}
  and~\ref{tab:kzh_minimal_has_only_crazy_perturbation_finite_system}.
  Therefore the unique solution of the system is~$0$, and
  Claim~(\ref{li:bar-pi-zero-outside-special}) is proved.}
\smallbreak

\DIFFPROTECT{Next, we show that
\begin{enumerate}[\quad(i)]\setcounter{enumi}{1}
  \item\label{li:bar-pi-constant-on-cosets-on-special}
    $\bar\pi$ is constant on each coset $\bar x + T \in \R/T$ on the special interval $(l, u)$, and likewise on 
    the special interval $(f-u, f-l)$. 
  \end{enumerate}
The
function $\bar\pi$ satisfies the additivity relations
\eqref{eq:additivity-sans-limits-bar-pi} from the faces
$
F(\{a_i\}, [l, u],\allowbreak [f-u, f-l])$ for $i=0,1,2$, where $a_0 = x_6$,
$a_1 = x_{10} = a_0 + t_1 = \textstyle\frac{77}{7752}\sqrt{2} +
\tfrac{19}{100}$, and $a_2 = x_{13} = a_0 + t_2$. These faces appear in
\autoref{tab:kzh_minimal_has_only_crazy_perturbation_1_delta_pi_dim_1}; see
also \autoref{fig:complex_nf}. 
Let $\bar x$ be an arbitrary real number. 
Then there exists a point $\hat x\in(l, u)$ such that $\bar x - \hat x \in T$
and $\hat x\pm t_i \in (l, u)$.  
Then $\bar\pi$ and $\hat x$ satisfy the hypothesis of
\cite[Lemma~B.1]{koeppe-zhou:crazy-perturbation}.  Writing $\bar x - \hat x = \lambda_1 t_1 + \lambda_2 t_2$ 
for some $\lambda_1, \lambda_2\in\Z$, the lemma gives $\bar\pi(\bar x) - \bar\pi(\hat x) = \sum_{i=1}^2\lambda_i(\bar\pi(a_i)-\bar\pi(a_0)$.  Using $\bar\pi(a_i) = 0$, as the points $a_i$ lie
outside of the special intervals, we obtain $\bar\pi(\bar x) = \bar\pi(\hat x)$.  Using the
symmetry relation given by \eqref{eq:additivity-sans-limits-bar-pi} for the face $
F([l, u], [f-u, f-l], \{f\})$, we obtain that $\bar\pi$ is constant on the set~$(f-u, f-l) \cap (f-\bar x + T)$ as well.
}
\smallbreak

Next, using the face $F([l, u], [l, u], \{l+u\})$, which satisfies~\eqref{eq:additivity-sans-limits},
and $\bar\pi(l+u) = 0$ from (\ref{li:bar-pi-zero-outside-special}) because $l+u$ lies outside the special intervals,
we obtain that
\begin{enumerate}[\quad(i)]\setcounter{enumi}{2}
\item\label{li:bar-pi-additivities-on-special-at-z=l+u--pre}
  $\bar\pi(x)+\bar\pi(y)=0$ for $x, y \in (l,u)$ such that $x+y = l+u$.
\end{enumerate}
Together with (\ref{li:bar-pi-constant-on-cosets-on-special}), 
we obtain that 
\begin{enumerate}[\quad(i)]\setcounter{enumi}{3}
\item\label{li:bar-pi-additivities-on-special-at-z=l+u}
  $\bar\pi(x)+\bar\pi(y)=0$ for $x, y \in (l,u)$ such that $x+y \in (l+u) + T$.
\end{enumerate}
\smallbreak

We now show that $\bar\pi$ also satisfies the following condition:
\begin{enumerate}[\quad(i)]\setcounter{enumi}{4}
\item\label{li:bar-pi-abs-leq-s-on-special}
  $\left|\bar\pi(x)\right| \leq s$ for all $x \in (l, u) \cup (f-u, f-l)$, 
\end{enumerate}
where
\begin{equation}\label{eq:magic-s}
  s = \pi(x_{39}^-)+\pi(1+l-x_{39})-\pi(l) = \tfrac{19}{23998}.
\end{equation}
Indeed, by (\ref{li:bar-pi-additivities-on-special-at-z=l+u--pre}) and \eqref{eq:bar-pi-symmetry-on-special}, it suffices to show that for any $x \in (l, u)$, we
have $\bar\pi(x) \geq -s$. Suppose, for the sake of contradiction, that there
is $\bar x \in (l, u)$ such that $\bar\pi(\bar x) < -s$. Since the group $T$
is dense in $\R$, we can find $x \in (l, u)$ such that $x \in \bar x + T$ and
$x$ is arbitrarily close to $1+l - x_{39}$. We choose $x$ so that $\delta = x
- (1+l - x_{39}) \in (0, \frac{-s-\bar\pi(\bar x)}{c_2 - c_3})$, where $c_2$
and $c_3$ denote the slope of $\pi$ on the pieces $(l, u)$ and $(0, x_1)$,
respectively.  
See 
\DIFFPROTECT{\autoref{tab:kzh_minimal_has_only_crazy_perturbation_1}}
for the concrete values of the parameters. 
Let $y = 1+l -x$. Then $y = x_{39} -\delta$. It follows from (i) that $\bar\pi(y)=0$ and $\bar\pi (x+y) = \bar\pi(l)=0$. Now consider $\Delta\pi'(x,y) = \pi'(x)+\pi'(y)-\pi'(x+y)$, where
\[
\begin{array}{r@{\;}l@{\;}l}
\pi'(x)&= \bar\pi(x) + \pi(x)&=\bar\pi(x) +\pi(1+l-x_{39})+\delta c_2;\\
\pi'(y)&=\pi(y)&=\pi(x_{39}^-) - \delta c_3;\\
\pi'(x+y)&=\pi(x+y)&=\pi(l).  
\end{array}
\]
Since $x - \bar x \in T$, the condition (\ref{li:bar-pi-constant-on-cosets-on-special}) implies that $\bar\pi(x) = \bar\pi(\bar x)$.
We have
\begin{align*}
\Delta\pi'(x,y) &= \bar\pi(\bar x)+[\pi(1+l-x_{39})+ \pi(x_{39}^-)-\pi(l)]+\delta(c_2 - c_3) \\
& =\bar\pi(\bar x) + s + \delta(c_2 - c_3)
  < 0,
\end{align*}
a contradiction to the subadditivity of $\pi'$. Therefore, $\bar\pi$ satisfies
condition~(\ref{li:bar-pi-abs-leq-s-on-special}).\smallbreak

\begin{figure}[tp]
\centering
\includegraphics[width=\linewidth]{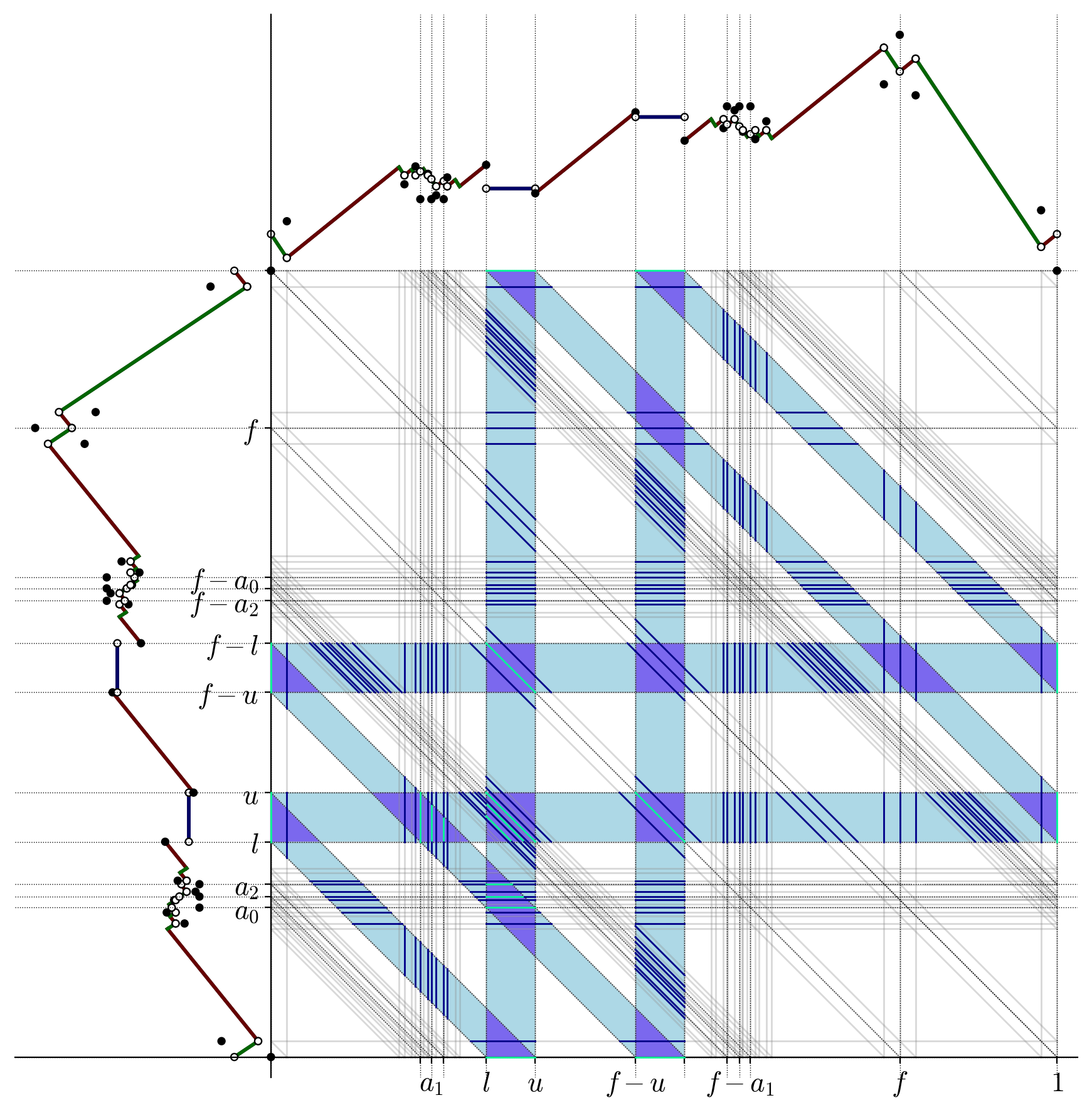}
\caption{Diagram of the polyhedral complex $\Delta\P$ of the function $\pi=
  \sage{kzh\_minimal\_has\_only\_crazy\_perturbation\_1()}$ (shown at the top
  and left borders), where two-dimensional faces $F$
  are color-coded according to the values $n_F$: $n_F=0$ (\emph{white}),
  $n_F=1$ (\emph{light blue}), $n_F=2$ (\emph{medium lavender blue}).
  One-dimensional faces $F$ with $n_F > 0$ are shown in (a) \emph{light green} if $\Delta\pi_F(u,v) = 0$,
  (b) \emph{dark blue} if $\Delta\pi_F(u,v) \geq n_F \cdot s$ for $(u,v)\in\verts(F)$.
}
\label{fig:complex_nf}
\end{figure}
Let $F$ be a face of $\Delta\P$. Denote by $n_F \in \{0, 1, 2\}$ the number
of projections $p_i(\relint(F))$ for $i=1, 2, 3$ that intersect with  $(l, u)
\cup (f-u, f-l)$.  See \autoref{fig:complex_nf}; note that there is no face $F$ with $n_F=3$.
It follows from the conditions (\ref{li:bar-pi-zero-outside-special}) and~(\ref{li:bar-pi-abs-leq-s-on-special}) that
\[\left|\Delta\bar\pi(x,y)\right| \leq n_F \cdot s \quad\text{ for any } (x,y) \in \relint(F).\]
Moreover, we make the following claim.
\begin{enumerate}[\quad(i)]\setcounter{enumi}{5}
\item\label{claim:n_F}
  If $F\in \Delta\P$ has $n_F \neq 0$, then either
  \begin{enumerate}[(a)]
  \item[(a)] $\Delta\pi_F(u,v)=0$ for all $(u,v) \in \verts(F)$, or
  \item[(b)] $\Delta\pi_F(u,v)\geq n_F \cdot s$ for all $(u,v) \in \verts(F)$, and the inequality is strict for at least one vertex.
  \end{enumerate}
\end{enumerate}
We have verified this claim computationally (using exact computations) in our
software, by enumerating all faces~$F$ of $\Delta\P$ with
$n_F>0$.\footnote{The enumeration is done by the function
  \sage{generate\_faces\_with\_projections\_intersecting}. 
  A fully automatic verification is carried out by the
  command 
  \sage{kzh\_minimal\_has\_only\_crazy\_perturbation\_1\_check\_subadditivity\_slacks()}.}
We provide the relevant data of the function in
Appendix~\ref{s:kzh_minimal_has_only_crazy_perturbation_1} (Tables~\ref{tab:kzh_minimal_has_only_crazy_perturbation_1_delta_pi_dim_1} and
\ref{tab:kzh_minimal_has_only_crazy_perturbation_1_delta_pi_dim_2}) for
the reader's reference and for archival purposes.\smallbreak

Next, we show the following simple corollary of Claim~(\ref{claim:n_F}):
\begin{enumerate}[\quad(i)]\setcounter{enumi}{6}
\item\label{claim:n_F-consequence}
  If $F\in \Delta\P$ has $n_F \neq 0$, then either
  \begin{enumerate}[(a)]
  \item[(a)] $\Delta\pi(x,y)=0$ for all $(x,y) \in \relint(F)$, or
  \item[(b)] $\Delta\pi(x,y) > n_F \cdot s$ for all $(x,y) \in \relint(F)$.
  \end{enumerate}
\end{enumerate}
To prove this, assume that $n_F \neq 0$.  Since $\Delta\pi_F$ is affine linear
on $F$, $\Delta\pi(x,y)$ for $(x, y)\in\relint(F)$ is a strict convex
combination of $\{\,\Delta\pi_F(u, v) \st 
(u, v) \in \verts(F)\,\}$.  As at least one of the inequalities
$\Delta\pi_F(u,v)\geq n_F \cdot s$ is strict, (b) follows.\smallbreak

Finally, we prove the following claim:
\begin{enumerate}[\quad(i)]\setcounter{enumi}{7}
\item\label{li:delta-pi-positive-implies-delta-pi'-positive}
  For $(x, y) \in \R^2$ such that $\Delta\pi(x,y)>0$, we have $\Delta\pi'(x,y) > 0$.
\end{enumerate}
To prove this, consider the (unique) face $F \in \Delta\P$ such that $(x,y)
\in \relint(F)$.  If $n_F = 0$, then $\Delta\bar\pi(x,y)=0$, and hence
$\Delta\pi'(x,y)=\Delta\pi(x,y)>0$. Otherwise, because 
we have $\Delta\pi(x,y) > 0$ by assumption, the above case (b) applies, and
hence 
$\Delta\pi'(x,y)=\Delta\pi(x,y)+\Delta\bar\pi(x,y) > 0$ holds when $n_F \neq 0$ as well. 
\smallbreak

We obtain that $E(\pi')\subseteq E(\pi)$. This, together with the assumption $E(\pi)\subseteq E(\pi')$, implies that $E(\pi)=  E(\pi')$. 
We conclude, by \autoref{lemma:weak_facet_minimal}(3), that $\pi$ is a weak
facet.
\end{proof}

\subsection{Extreme and weak facet, but not a facet}

For the remaining separation, we construct an extreme function $\pilifted$ as
follows.  In
\cite[{Theorem~\ref{crazy:th:kzh_minimal_has_only_crazy_perturbation_1}}]{koeppe-zhou:crazy-perturbation},
the authors showed that
$\pi =
\sage{kzh\_minimal\_has\_only\_}$\allowbreak$\sage{crazy\_}$\allowbreak$\sage{perturbation\_1()}$
admits an effective locally microperiodic perturbation that is supported on
the cosets $l + T$, $u + T$ of the group~$T$ on the special interval $(l,u)$
and, equivariantly, on the cosets $f - l + T$, $f - u + T$ on the special
interval $(f-u, l-u)$.

We perturb 
the function $\pi$ instead on infinitely
many (almost all) cosets of the group~$T$ on the two
special intervals as follows.  
\DIFFPROTECT{Consider the involution (point reflection) $\rho_{l+u}\colon x \mapsto l+u - x = f-a_0 - x$, which has the unique fixed point $\frac{l+u}2$.
Because $\rho_{l+u}(x + t) = \rho_{l+u}(x) - t$ for $x\in\R$ and $t \in T$, 
the involution can be considered as a map from the quotient $\R/T$ (whose elements are the cosets of~$T$) 
to itself.
The set of fixed points of $\R/T$ under this map is
\begin{align*}
  C &= 
      \bigl\lbrace \tfrac{l+u}{2}+T, \;  \tfrac{l+u-t_1}{2}+T, \; \tfrac{l+u-t_2}{2}+T, \; \tfrac{l+u-(t_1+t_2)}{2}+T\bigr\rbrace.
  \\
  \intertext{The remaining elements of $\R/T$ are paired by the involution into two-element orbits $\{ x, \rho_{l+u}(x) \}$.  
  Fix a choice function $c^+$ that maps each of the two-element sets $\{ x, \rho_{l+u}(x) \} \subset \R/T$ to one of its two elements. 
  (We remark that the existence of such a choice function does \emph{not} depend on the axiom of choice because 
  the sets in question are finite.)
  Then define}
  C^+ &=\bigl\lbrace\, c^+(\{x, \rho_{l+u}(x) \}) \in \R/T \bigst x \in \R/T,\, x \not\in C \,\bigr\rbrace.
\end{align*}
Using these sets, we define for every $x \in [0,1]$}
\begin{equation}\label{eq:lifted_crazy_example}
      \pilifted(x)= 
      \begin{cases} 
        \pi(x) & \text{if } x \not\in (l, u) \cup (f-u, f-l) \text{, or} \\
        & \text{if } x \in (l, u) \text{ such that } x + T \in C \text{, or} \\
        & \text{if } x \in (f-u, f-l) \text{ such that } f - x + T \in  C; \\
        \pi(x) + s & \text{if } x \in (l, u) \text{ such that } x + T \in C^+  \text{, or} \\
        & \text{if } x \in (f-u, f-l) \text{ such that } f - x + T \in C^+; \\
        \pi(x) - s  & \text{otherwise},
      \end{cases}
    \end{equation}
    where $s$ is the constant defined in \eqref{eq:magic-s} in the proof of
    \autoref{lemma:discontinuous_examples_2}.
    We extend this function to~$\R$ by setting $\pilifted(x+z) = \pilifted(x)$ for $x\in\R$ and $z\in\Z$.

\begin{theorem}
  \label{lemma:discontinuous_examples_3}
  The function $\pilifted =
  \sage{kzh\_extreme\_and\_weak\_facet\_but\_not\_facet}()$\footnote{The
    authors thank Jiawei Wang for his help with implementing this function in
    the software.}
  defined in
  \eqref {eq:lifted_crazy_example} is a two-sided discontinuous,
  non--piecewise linear function that is extreme and a
  weak facet, but is not a facet.
\end{theorem}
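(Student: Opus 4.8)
The plan is to write $\bar\psi := \pilifted - \pi$ and to exploit that $\bar\psi$ is the ``maximal'' microperiodic perturbation allowed by \autoref{lemma:discontinuous_examples_2}. By construction $\bar\psi$ vanishes outside the two special intervals, is invariant under $T$ (microperiodic) on each of them, is antisymmetric under the reflection $\rho_{l+u}$ on $(l,u)$ and under $x\mapsto f-x$ between the two intervals, and takes the values $\pm s$ on every coset of $T$ except the finitely many $\rho_{l+u}$-fixed cosets collected in $C$, where it is $0$. First I would record that $\pilifted$ is minimal valid: subadditivity follows verbatim from the $n_F$-analysis in the proof of \autoref{lemma:discontinuous_examples_2}, since $\lvert\Delta\bar\psi(x,y)\rvert\le n_F\cdot s$ on each face $F$, so that its case (a) gives $\Delta\pilifted=0$ and case (b) gives $\Delta\pilifted>0$; the symmetry condition and $\pilifted(0)=0$ hold by construction, and nonnegativity follows from subadditivity and boundedness as in the proof of \autoref{lemma:lipschitz-equiv-perturbation}. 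The same analysis yields the central fact $E(\pilifted)=E(\pi)$: indeed $E(\pi)\subseteq E(\bar\psi)$ on every additive face of $\pi$ (the cross faces $F(\{a_i\},[l,u],[f-u,f-l])$, the faces $F([l,u],[l,u],\{l+u\})$ and $F([l,u],[f-u,f-l],\{f\})$, and the rest) because of microperiodicity and antisymmetry, a finite check I would confirm computationally.

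Granting $E(\pilifted)=E(\pi)$, three assertions are short. The function $\pilifted$ is \emph{not piecewise linear}, since $\bar\psi$ equals $+s$ on the dense union of the cosets in $C^+$ and $-s$ on the dense union of the cosets in $\rho_{l+u}(C^+)$, so $\pilifted$ fails to be affine on any subinterval of $(l,u)$; it is \emph{two-sided discontinuous} because it agrees with the two-sided discontinuous $\pi$ off the special intervals. For the \emph{weak facet} property, let $\pi'$ be minimal valid with $E(\pilifted)\subseteq E(\pi')$; then $E(\pi)\subseteq E(\pi')$, and as $\pi$ is a weak facet (\autoref{lemma:discontinuous_examples_2}), \autoref{lemma:weak_facet_minimal}(3) gives $E(\pi)=E(\pi')$, hence $E(\pilifted)=E(\pi')$, so $\pilifted$ is a weak facet by \autoref{lemma:weak_facet_minimal}(3). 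Finally $\pilifted$ is \emph{not a facet}: taking $\pi'=\pi\neq\pilifted$, which is minimal valid and satisfies $E(\pilifted)\subseteq E(\pi)$, \autoref{lemma:facet_theorem} shows $\pilifted$ is not a facet.

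The heart of the proof is \emph{extremality}. Suppose $\pilifted=\tfrac12(\pi^1+\pi^2)$ with $\pi^1,\pi^2$ minimal valid, and set $\tilde\pi=\tfrac12(\pi^1-\pi^2)$, so $\pi^\pm:=\pilifted\pm\tilde\pi$ are minimal valid; I must show $\tilde\pi\equiv0$. Since $E(\pilifted)\subseteq E(\pi^\pm)$ and $E(\pilifted)=E(\pi)$, the bounded $\Z$-periodic function $\tilde\pi$ satisfies $E(\pi)\subseteq E(\tilde\pi)$ together with the symmetry and fixed-value conditions, so it meets the hypotheses used in the proof of \autoref{lemma:discontinuous_examples_2}. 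Reusing that argument (its Claims (o), (i), (ii), which rely only on boundedness and on the additive faces of $\pi$, all of which remain additive for $\pilifted$) I obtain that $\tilde\pi$ vanishes off the special intervals and is microperiodic on each of them. Now apply the bound of Claim (v) of \autoref{lemma:discontinuous_examples_2} to the perturbations $\pi^\pm-\pi=\bar\psi\pm\tilde\pi$: on every coset this gives $\lvert\bar\psi(x)+\tilde\pi(x)\rvert\le s$ and $\lvert\bar\psi(x)-\tilde\pi(x)\rvert\le s$. On any coset where $\bar\psi(x)=+s$ these two inequalities force $\tilde\pi(x)=0$, and likewise where $\bar\psi(x)=-s$; this covers every coset outside the finite set $C$. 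On the remaining fixed cosets, microperiodicity together with the additivity $\tilde\pi(x)+\tilde\pi(l+u-x)=0$ (from the face $F([l,u],[l,u],\{l+u\})$, additive for $\pilifted$) gives $2\tilde\pi(x)=0$, since $\rho_{l+u}$ fixes these cosets modulo $T$. Hence $\tilde\pi\equiv0$ on $(l,u)$, by $f$-antisymmetry on $(f-u,f-l)$, and it is already zero elsewhere, so $\pi^1=\pi^2=\pilifted$ and $\pilifted$ is extreme.

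The hard part will be twofold. The delicate conceptual point is that $\pilifted$ is no longer piecewise linear, so the oversampling and interval-lemma toolkit cannot be applied to $\pilifted$ directly; the resolution is to run every structural step on the perturbation $\tilde\pi$ of $\pilifted$ instead, using only that $\tilde\pi$ is bounded and additive on the additive faces of $\pi$, which is exactly the regime in which the proof of \autoref{lemma:discontinuous_examples_2} operates. The delicate technical point is the identity $E(\pilifted)=E(\pi)$, i.e.\ that pushing the perturbation to its extreme magnitude $\pm s$ on almost all cosets destroys no additivities and creates none; establishing $E(\pi)\subseteq E(\bar\psi)$ across all additive faces meeting the special intervals is where a finite, computer-assisted verification is needed, paralleling the check of Claim (vi) in \autoref{lemma:discontinuous_examples_2}.
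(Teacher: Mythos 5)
Your proposal is correct and follows essentially the same route as the paper: minimality and $E(\pilifted)=E(\pi)$ via the $n_F$ dichotomy plus a finite check of the additive faces meeting the special intervals, the weak-facet and non-facet claims via \autoref{lemma:weak_facet_minimal}(3) and \autoref{lemma:facet_theorem}, and extremality by applying conditions (o)--(\ref{li:bar-pi-abs-leq-s-on-special}) from the proof of \autoref{lemma:discontinuous_examples_2} to the perturbations $\pi^i-\pi$ and exploiting that $\bar\psi$ saturates the bound $s$ on all non-fixed cosets. Your reformulation with $\tilde\pi=\tfrac12(\pi^1-\pi^2)$ instead of comparing $\bar\pi^1$ and $\bar\pi^2$ directly is only a cosmetic repackaging of the paper's three-case argument.
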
 

\begin{proof}
  Let $\bar\pi = \pilifted - \pi$
  . By definition of $\pilifted$, the function $\bar\pi$ is periodic modulo
  1. Moreover, it satisfies the symmetry condition
  \eqref{eq:bar-pi-symmetry-on-special}, the
  conditions~\eqref{eq:fixed-to-zero}, as well as the conditions
  (\ref{li:bar-pi-zero-outside-special}) to
  (\ref{li:bar-pi-abs-leq-s-on-special}) in the proof of
  \autoref{lemma:discontinuous_examples_2}. 
  We claim that $\pilifted$ is subadditive. 
  To this end, recall the notation $n_F$ from the proof of \autoref{lemma:discontinuous_examples_2}.

  For all faces~$F\in\Delta\P$ with $n_F=0$, 
  because $\pilifted$ equals $\pi$ outside of the special intervals, 
  we have $\Delta\bar\pi_F(x,y) = 0$
  for $(x, y) \in \relint(F)$, and thus $\Delta\hat\pi_F(x,y) = \Delta\pi_F(x,
  y)$ for $(x, y)\in\relint(F)$.

  Next, consider the faces $F$ with $n_F>0$.
  By Claim~(\ref{claim:n_F-consequence}) from
  \autoref{lemma:discontinuous_examples_2}, we either have (a) $\Delta\pi_F(x, y)
  = 0$ for all $(x, y)\in \relint(F)$, or (b) $\Delta\pi_F(x,y) > n_F\cdot s$ for all $(x,
  y)\in \relint(F)$.  

  From \autoref{tab:kzh_minimal_has_only_crazy_perturbation_1_delta_pi_dim_1} (see also
  \autoref{fig:complex_nf}), we see that these faces satisfying (a) are exactly the following
  (up to replacing $F(I, J, K)$ by $F(J, I, K)$ and up to $\Z$-periodicity): 
  \begin{enumerate}[(1)]
  \item $F = F([l, u], \{a_i\}, [f-u, f-l])$ for $i=1,2,3$.  Denoting $t_0 = 0$ for
    convenience, we have $a_i = a_0 + t_i$
    , where $t_i \in T$.
    Fix $i$ and let $(x, a_i) \in \relint(F)$, 
    so $x \in (l, u)$ and $x + a_i \in (f-u, f-l)$.  
    Because $\bar\pi(a_i) = 0$, as $a_i$ lies outside of the special
    intervals, and $(l, u) \ni f - (x + a_i) = f-a_0 - x - t_i = \rho_{l+u}(x) -
    t_i \in \rho_{l+u}(x) + T$,
    we have $\Delta\bar\pi(x, a_i) = \bar\pi(x) +
    \bar\pi(a_i) -\bar\pi(x + a_i) = \bar\pi(x) + \bar\pi(f - (x + a_i)) =
    \bar\pi(x) + \bar\pi(\rho_{l+u}(x)) = 0$. 

  \item $F = F([l, u], [l, u], \{f-a_i\})$ for $i=1,2,3$.  Again fix~$i$ and let
    $(x, y) \in \relint(F)$, so $x + y = f-a_i = f - a_0 - t_i$ and $x,
    y\in(l, u)$.  Then $\Delta\bar\pi(x, y) = 
    \bar\pi(x) + \bar\pi((f - a_0) - x - t_i) - \bar\pi(f-a_i) = \bar\pi(x) +
    \bar\pi(\rho_{l+u}(x)) = 0$.

  \item $F = F([l, u], [f-u, f-l], \{f\})$.  Then, by the symmetry
    condition~\eqref{eq:bar-pi-symmetry-on-special}, we have $\Delta\bar\pi(x,y) = 0$.
  \item $F(\{0\}, [l, u], [l, u])$ and $F(\{0\}, [f-u, f-l], [f-u,
    f-l])$. Here $\Delta\bar\pi(x,y) = 0$ trivially.
  \end{enumerate}
  Again, we conclude that $\Delta\hat\pi_F(x,y) =
  \Delta\pi_F(x, y)$ for $(x, y) \in \relint(F)$.

  Finally, consider the faces $F$ with $n_F>0$ that satisfy (b), i.e.,
  $\Delta\pi_F(x, y) > n_F\cdot s$ for $(x,
  y)\in \relint(F)$.   Because  $|\Delta\bar\pi_F(x,y)| \leq
  n_F\cdot s$, we have $\Delta\hat\pi_F(x, y) > 0$ for $(x, y)\in \relint(F)$.

  Hence, $\hat\pi$ is subadditive as claimed, and therefore a minimal valid
  function, and in fact $E(\pilifted)=E(\pi)$. 

  Let $\pi'$ be a minimal valid function such that $E(\pilifted)\subseteq E(\pi')$. Then, as shown in the proof of \autoref{lemma:discontinuous_examples_2}, we have $E(\pilifted) = E(\pi')$. It follows from \autoref{lemma:weak_facet_minimal}(3) that $\pilifted$ is a weak facet. However, the function $\pilifted$ is not a facet, since $E(\pilifted)=E(\pi)$ but $\pilifted \neq \pi$. Next, we show that $\pilifted$ is an extreme function. 

Suppose 
that $\pilifted$ can be written as $\pilifted = \frac12 (\pi^1+\pi^2)$, where $\pi^1, \pi^2$ are minimal valid functions. 
Then $E(\pilifted) \subseteq E(\pi^1)$ and $E(\pilifted) \subseteq E(\pi^2)$. 
Let $\bar\pi^1= \pi^1-\pi$ and $\bar\pi^2=\pi^2-\pi$. We have that  $E(\pi)
\subseteq E(\bar\pi^1)$ and $E(\pi) \subseteq E(\bar\pi^2)$. Hence, as shown
in the proof of \autoref{lemma:discontinuous_examples_2}, $\bar\pi^1$ and
$\bar\pi^2$ satisfy the symmetry
condition \eqref{eq:bar-pi-symmetry-on-special} and the conditions (\ref{li:bar-pi-zero-outside-special}) to (\ref{li:bar-pi-abs-leq-s-on-special}). We will show that $\bar\pi^1=\bar\pi^2$.

For $x\not\in (l,u)\cup (f-u, f-l)$, we have $\bar\pi^i(x)=0$ ($i=1, 2$) by
condition (\ref{li:bar-pi-zero-outside-special}). It remains to prove that $\bar\pi^1(x)=\bar\pi^2(x)$ for $x \in
(l,u)\cup (f-u, f-l)$. By the symmetry condition \eqref{eq:bar-pi-symmetry-on-special}, it suffices to consider
$x \in (l, u)$. We distinguish three cases. If $x + T\in C$, then condition
(\ref{li:bar-pi-additivities-on-special-at-z=l+u}) implies $\bar\pi^i(x)=0$ ($i=1, 2$). If $x + T\in C^+$, then
$\bar\pi(x)=s$ by definition. Notice that $\bar\pi^1 +\bar\pi^2 =
\pi^1+\pi^2-2\pi =2\pilifted-2\pi = 2\bar\pi$, and that
$\bar\pi^i(x) \leq s$ ($i=1, 2$) by condition (\ref{li:bar-pi-abs-leq-s-on-special}). We have $\bar\pi^i(x)=s$
($i=1, 2$) in this case. If $x + T\not\in C $ and $x + T \not\in C^+$, then $\bar\pi(x)=-s$, and hence $\bar\pi^i(x)=-s$ ($i=1, 2$). Therefore, $\bar\pi^1=\bar\pi^2$ and $\pi^1=\pi^2$, which proves that the function $\pilifted$ is extreme.
\end{proof}

\section{Conclusion}
\label{s:conclusion}

As a conclusion to our paper, we discuss the three notions relative to
subspaces of functions.  

Gomory and Johnson introduced the notion of facets in \cite{tspace} in a
setting in which valid functions, by definition, are continuous functions.
Following the discussion in \cite{bccz08222222}, a continuous valid function
$\pi$ is defined to be a \emph{facet in the sense of Gomory--Johnson} if
$P(\pi) \subseteq P(\pi')$ implies $\pi' = \pi$ for every \emph{continuous}
valid function $\pi'$.  As remarked in \cite{bccz08222222}, every continuous
facet is also a facet in the sense of Gomory--Johnson.  We have a partial
converse as follows.

\begin{corollary}
  \label{cor:facet-in-the-sense-of-GJ}
  Every continuous piecewise linear function (not necessarily with rational
  breakpoints) that is a facet in the sense of Gomory--Johnson is also a
  facet.
\end{corollary}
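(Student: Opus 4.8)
The plan is to prove the contrapositive: if a continuous piecewise linear function $\pi$ is \emph{not} a facet, then it is not a facet in the sense of Gomory--Johnson, i.e.\ there is a continuous valid function $\pi'\neq\pi$ with $P(\pi)\subseteq P(\pi')$. The whole point is an observation about the continuous piecewise linear case: the witness $\pi'$ to non-facetness that is produced by \autoref{lemma:facet_theorem} turns out to be \emph{automatically continuous}, so it already violates the Gomory--Johnson condition, which only quantifies over continuous competitors $\pi'$. Thus the weaker Gomory--Johnson facet property suffices to force the stronger one.

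First I would reduce to the case that $\pi$ is minimal valid. This step is genuinely necessary, since every facet is minimal (a non-minimal $\pi$ is dominated by a minimal valid $\pi''\neq\pi$ with $P(\pi)\subseteq P(\pi'')$, as in the proof of \autoref{lemma:weak_facet_minimal}(1)). To conclude minimality from the \emph{Gomory--Johnson} hypothesis, one must exhibit a dominating function that is itself \emph{continuous}: if $\pi$ fails subadditivity, its subadditive closure $\pi^{\flat}\leq\pi$ is valid and continuous, is distinct from $\pi$, and satisfies $P(\pi)\subseteq P(\pi^{\flat})$ by the usual tightness argument; the remaining minimality conditions ($\pi(0)=0$ and the symmetry condition) are repaired analogously, again by continuous dominating functions. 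Each such witness contradicts the Gomory--Johnson facet property, so $\pi$ must already be minimal.

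Now assume $\pi$ is minimal, continuous, piecewise linear, and not a facet. By \autoref{lemma:facet_theorem} there is a minimal valid $\pi'\neq\pi$ with $E(\pi)\subseteq E(\pi')$. Here I would invoke verbatim the argument already used in the proof of \autoref{lemma:cont_pwl_extreme_is_facet}: because $\pi$ is continuous piecewise linear with $\pi(0)=\pi(1)=0$, it is additive on a neighborhood of the origin, so $E(\pi)\subseteq E(\pi')$ forces $\pi'$ to be additive there as well; the Interval Lemma then makes $\pi'$ affine near $0$, and subadditivity together with $\pi'(0)=0$ upgrades this to Lipschitz continuity of $\pi'$ on all of $\R$. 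Hence $\pi'$ is a \emph{continuous} minimal valid function. By \autoref{lemma:P_pi_and_E_pi}, the inclusion $E(\pi)\subseteq E(\pi')$ gives $P(\pi)\subseteq P(\pi')$, and since $\pi'\neq\pi$ is continuous and valid this contradicts $\pi$ being a facet in the sense of Gomory--Johnson. Therefore $\pi$ is a facet.

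The main obstacle is not the core step, which is essentially read off from the proof of \autoref{lemma:cont_pwl_extreme_is_facet} once one notices that its witness $\pi'$ is continuous, but rather the preliminary reduction to minimality. Because the Gomory--Johnson notion only tests $\pi$ against \emph{continuous} functions, I cannot simply quote the generic domination theorem, whose dominating minimal function may be discontinuous; I must instead verify that for a continuous piecewise linear $\pi$ all the relevant dominating witnesses can be chosen continuous. Checking that the subadditive-closure and symmetrization repairs preserve continuity (and validity) is the delicate point of the proof.
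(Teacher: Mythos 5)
Your core argument is correct and lands in essentially the same place as the paper's proof, but by a slightly different route. The paper argues: $\pi$ minimal, continuous piecewise linear and not a facet $\Rightarrow$ not extreme (by \autoref{lemma:cont_pwl_extreme_is_facet}) $\Rightarrow$ $\pi = \tfrac12(\pi^1+\pi^2)$ with minimal valid $\pi^1,\pi^2\neq\pi$ that are \emph{Lipschitz continuous} by the survey's Lemma 2.11\,(iv) and satisfy $E(\pi)\subseteq E(\pi^i)$; then $\pi'=\pi^1$ is a continuous competitor with $P(\pi)\subseteq P(\pi')$ by \autoref{lemma:P_pi_and_E_pi}. You instead go directly through \autoref{lemma:facet_theorem} to obtain a minimal valid $\pi'\neq\pi$ with $E(\pi)\subseteq E(\pi')$ and re-derive its Lipschitz continuity via the Interval Lemma argument embedded in the proof of \autoref{lemma:cont_pwl_extreme_is_facet}. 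Both routes rest on the same observation -- the witness to non-facetness is automatically continuous -- and both are valid; yours avoids the detour through extremality and the external citation, at the cost of repeating the Interval Lemma argument inline.

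One caution about the step you yourself single out as delicate. The paper's proof simply begins ``let $\pi$ be a continuous piecewise linear \emph{minimal} valid function that is not a facet,'' so the reduction to minimality is not carried out there either; you are right that it is needed for the statement as literally worded. However, your proposed repair via a ``subadditive closure'' $\pi^\flat\leq\pi$ is not obviously sound: it is not clear that such a minorant is continuous, nor that it remains \emph{valid} (a pointwise smaller function only makes the inequalities $\sum_r \pi(r)y(r)\geq 1$ harder to satisfy), and the generic domination theorem of Basu--Hildebrand--K\"oppe--Molinaro produces a minimal dominating function with no continuity guarantee. If you keep this reduction, it needs an actual argument; as written it is the one genuinely unsupported claim in your proposal, though it concerns a point the paper itself passes over.
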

\begin{proof}
  Let $\pi$ be a continuous piecewise linear minimal valid function that is
  not a facet. Then $\pi$ is not an extreme function.  Thus $\pi = \frac12(\pi^1
  + \pi^2)$ with some minimal valid functions $\pi^1, \pi^2 \neq \pi$, which
  are Lipschitz continuous by \cite[Lemma 2.11\,(iv)]{igp_survey} and satisfy
  $E(\pi) \subseteq E(\pi^i)$ by \cite[Lemma 2.11\,(ii)]{igp_survey}.
  Setting $\pi' = \pi^1$, it follows from \autoref{lemma:P_pi_and_E_pi} that
  $P(\pi) \subseteq P(\pi')$. Therefore $\pi$ is not a facet in the sense of
  Gomory--Johnson.
\end{proof}

\begin{openquestion}
  Is every facet in the sense of Gomory--Johnson a facet?
\end{openquestion}
An approach to resolve this question in the negative would be to construct a
continuous non--piecewise linear minimal valid function $\pi$ such that 
there exists a
minimal valid function~$\pi'\neq\pi$ with $P(\pi)\subseteq P(\pi')$ (equivalently,
$E(\pi)\subseteq E(\pi')$) that is discontinuous, and all such functions~$\pi'$ are discontinuous.  
Note that the differences 
$\bar\pi = \pi' - \pi$ cannot be effective perturbations for $\pi$, because
all effective perturbations of a continuous function~$\pi$
are Lipschitz continuous by \cite[Lemma 2.11\,(iv)]{igp_survey}.
\smallbreak

Basu et al.~\cite{basu2016structure} highlight the subspace of Lipschitz
continuous functions.  All minimal valid functions that are liftable to
cut-generating function pairs for the mixed integer problem belong to this
space \cite[Remark 2.7]{basu2016structure}.  
Define a \emph{facet in the sense of Lipschitz} to be a Lipschitz continuous
function such that $P(\pi) \subseteq P(\pi')$ implies $\pi' = \pi$ for every
Lipschitz continuous valid function $\pi'$.  Thus we can ask:

\begin{openquestion}
  Is every facet in the sense of Lipschitz a facet?
\end{openquestion}

\clearpage
\section*{Acknowledgment}
The authors wish to thank the anonymous referees for their numerous detailed
suggestions, which have been very valuable in improving the presentation of our
results.

\providecommand\ISBN{ISBN }
\bibliographystyle{../amsabbrvurl}
\bibliography{../bib/MLFCB_bib}

\providecommand\CheckAccent[1]{\accent20 #1}
\providecommand{\bysame}{\leavevmode\hbox to3em{\hrulefill}\thinspace}
\providecommand\ISBN{ISBN }
\providecommand{\MR}{\relax\ifhmode\unskip\space\fi MR }
\providecommand{\MRhref}[2]{%
  \href{http://www.ams.org/mathscinet-getitem?mr=#1}{#2}
}
\providecommand{\href}[2]{#2}
\begin{thebibliography}{10}

\bibitem{bccz08222222}
A.~Basu, M.~Conforti, G.~Cornu{\'e}jols, and G.~Zambelli, \emph{A
  counterexample to a conjecture of {G}omory and {J}ohnson}, Mathematical
  Programming Ser. A \textbf{133} (2012), no.~1--2, 25--38, \href
  {https://doi.org/10.1007/s10107-010-0407-1}
  {\path{https://doi.org/10.1007/s10107-010-0407-1}}.

\bibitem{basu2016structure}
A.~Basu, M.~Conforti, M.~Di~Summa, and J.~Paat, \emph{The structure of the
  infinite models in integer programming}, Integer Programming and
  Combinatorial Optimization: 19th International Conference, IPCO 2017,
  Waterloo, ON, Canada, June 26-28, 2017, Proceedings (F.~Eisenbrand and
  J.~Koenemann, eds.), Springer International Publishing, Cham, 2017,
  pp.~63--74, \href {https://doi.org/10.1007/978-3-319-59250-3_6}
  {\path{https://doi.org/10.1007/978-3-319-59250-3_6}},
  \ISBN{978-3-319-59250-3}.

\bibitem{basu-hildebrand-koeppe:equivariant}
A.~Basu, R.~Hildebrand, and M.~K{\"o}ppe, \emph{Equivariant perturbation in
  {G}omory and {J}ohnson's infinite group problem. {I}. {T}he one-dimensional
  case}, Mathematics of Operations Research \textbf{40} (2014), no.~1,
  105--129, \href {https://doi.org/10.1287/moor.2014.0660}
  {\path{https://doi.org/10.1287/moor.2014.0660}}.

\bibitem{igp_survey}
\bysame, \emph{Light on the infinite group relaxation {I}: foundations and
  taxonomy}, 4OR \textbf{14} (2016), no.~1, 1--40, \href
  {https://doi.org/10.1007/s10288-015-0292-9}
  {\path{https://doi.org/10.1007/s10288-015-0292-9}}.

\bibitem{igp_survey_part_2}
\bysame, \emph{Light on the infinite group relaxation {II}: sufficient
  conditions for extremality, sequences, and algorithms}, 4OR \textbf{14}
  (2016), no.~2, 107--131, \href {https://doi.org/10.1007/s10288-015-0293-8}
  {\path{https://doi.org/10.1007/s10288-015-0293-8}}.

\bibitem{bhk-IPCOext}
\bysame, \emph{Equivariant perturbation in {G}omory and {J}ohnson's infinite
  group problem---{III}: Foundations for the $k$-dimensional case with
  applications to $k=2$}, Mathematical Programming \textbf{163} (2017), no.~1,
  301--358, \href {https://doi.org/10.1007/s10107-016-1064-9}
  {\path{https://doi.org/10.1007/s10107-016-1064-9}}.

\bibitem{basu-hildebrand-koeppe-molinaro:k+1-slope}
A.~Basu, R.~Hildebrand, M.~K\"oppe, and M.~Molinaro, \emph{A~$(k+1)$-slope
  theorem for the $k$-dimensional infinite group relaxation}, SIAM Journal on
  Optimization \textbf{23} (2013), no.~2, 1021--1040, \href
  {https://doi.org/10.1137/110848608}
  {\path{https://doi.org/10.1137/110848608}}.

\bibitem{ccz-ipbook}
M.~Conforti, G.~Cornu\'ejols, and G.~Zambelli, \emph{Integer programming},
  Springer, Cham, 2014, \href {https://doi.org/10.1007/978-3-319-11008-0}
  {\path{https://doi.org/10.1007/978-3-319-11008-0}}, \ISBN{978-3-319-11007-3}.

\bibitem{corner_survey}
M.~Conforti, G.~Cornu{\'e}jols, and G.~Zambelli, \emph{Corner polyhedra and
  intersection cuts}, Surveys in Operations Research and Management Science
  \textbf{16} (2011), 105--120.

\bibitem{Dey-thesis}
S.~S. Dey, \emph{Strong cutting planes for unstructured mixed integer programs
  using multiple constraints}, {Ph.D.} thesis, Purdue University, West
  Lafayette, Indiana, 2007, available from
  \url{http://search.proquest.com/docview/304827785}, \ISBN{9780549303756}.

\bibitem{dey3}
S.~S. Dey and J.-P.~P. Richard, \emph{Facets of two-dimensional infinite group
  problems}, Mathematics of Operations Research \textbf{33} (2008), no.~1,
  140--166, \href {https://doi.org/10.1287/moor.1070.0283}
  {\path{https://doi.org/10.1287/moor.1070.0283}}.

\bibitem{dey1}
S.~S. Dey, J.-P.~P. Richard, Y.~Li, and L.~A. Miller, \emph{On the extreme
  inequalities of infinite group problems}, Mathematical Programming
  \textbf{121} (2010), no.~1, 145--170, \href
  {https://doi.org/10.1007/s10107-008-0229-6}
  {\path{https://doi.org/10.1007/s10107-008-0229-6}}.

\bibitem{infinite}
R.~E. Gomory and E.~L. Johnson, \emph{Some continuous functions related to
  corner polyhedra, {I}}, Mathematical Programming \textbf{3} (1972), 23--85,
  \href {https://doi.org/10.1007/BF01584976}
  {\path{https://doi.org/10.1007/BF01584976}}.

\bibitem{infinite2}
\bysame, \emph{Some continuous functions related to corner polyhedra, {II}},
  Mathematical Programming \textbf{3} (1972), 359--389, \href
  {https://doi.org/10.1007/BF01585008}
  {\path{https://doi.org/10.1007/BF01585008}}.

\bibitem{tspace}
\bysame, \emph{T-space and cutting planes}, Mathematical Programming
  \textbf{96} (2003), 341--375, \href
  {https://doi.org/10.1007/s10107-003-0389-3}
  {\path{https://doi.org/10.1007/s10107-003-0389-3}}.

\bibitem{koeppe-zhou:algo-paper}
R.~Hildebrand, M.~K{\"o}ppe, and Y.~Zhou, \emph{Equivariant perturbation in
  {G}omory and {J}ohnson's infinite group problem. {VII}. {Inverse} semigroup
  theory, closures, decomposition of perturbations}, e-print, 61 pages, 2018,
  \href {http://arxiv.org/abs/1811.06189} {\path{arXiv:1811.06189}}.

\bibitem{hildebrand-koeppe-zhou:algo-paper-abstract-ipco}
\bysame, \emph{On perturbation spaces of minimal valid functions: Inverse
  semigroup theory and equivariant decomposition theorem}, Integer Programming
  and Combinatorial Optimization. IPCO 2019 (A.~Lodi and V.~Nagarajan, eds.),
  Lecture Notes in Computer Science, vol. 11480, Springer, Cham, 2019, \href
  {https://doi.org/10.1007/978-3-030-17953-3_19}
  {\path{https://doi.org/10.1007/978-3-030-17953-3_19}},
  \ISBN{978-3-030-17952-6}.

\bibitem{hong-koeppe-zhou:software-paper}
C.~Y. Hong, M.~K{\"o}ppe, and Y.~Zhou, \emph{Equivariant perturbation in
  {G}omory and {J}ohnson's infinite group problem {(V)}. {S}oftware for the
  continuous and discontinuous 1-row case}, Optimization Methods and Software
  \textbf{33} (2018), no.~3, 475--498, \href
  {https://doi.org/10.1080/10556788.2017.1366486}
  {\path{https://doi.org/10.1080/10556788.2017.1366486}}.

\bibitem{koeppe-zhou:crazy-perturbation}
M.~K{\"o}ppe and Y.~Zhou, \emph{Equivariant perturbation in {G}omory and
  {J}ohnson's infinite group problem. {VI}. {T}he curious case of two-sided
  discontinuous minimal valid functions}, Discrete Optimization \textbf{30}
  (2018), 51--72, \href {https://doi.org/10.1016/j.disopt.2018.05.003}
  {\path{https://doi.org/10.1016/j.disopt.2018.05.003}}.

\bibitem{cutgeneratingfunctionology:lastest-release}
M.~K{\"o}ppe, Y.~Zhou, C.~Y. Hong, and J.~Wang,
  \emph{cutgeneratingfunctionology: Sage code for computation and
  experimentation with cut-generating functions, in particular the
  {Gomory--Johnson} infinite group problem},
  \url{https://github.com/mkoeppe/cutgeneratingfunctionology}, 2019, (Version
  1.3).

\bibitem{Richard-Dey-2010:50-year-survey}
J.-P.~P. Richard and S.~S. Dey, \emph{The group-theoretic approach in mixed
  integer programming}, 50 Years of Integer Programming 1958--2008
  (M.~J{\"u}nger, T.~M. Liebling, D.~Naddef, G.~L. Nemhauser, W.~R.
  Pulleyblank, G.~Reinelt, G.~Rinaldi, and L.~A. Wolsey, eds.), Springer Berlin
  Heidelberg, 2010, pp.~727--801, \href
  {https://doi.org/10.1007/978-3-540-68279-0_19}
  {\path{https://doi.org/10.1007/978-3-540-68279-0_19}},
  \ISBN{978-3-540-68274-5}.

\bibitem{sch}
A.~Schrijver, \emph{Theory of linear and integer programming}, John Wiley and
  Sons, New York, 1986.

\bibitem{zhou:dissertation}
Y.~Zhou, \emph{Infinite-dimensional relaxations of mixed-integer optimization
  problems}, Ph.D. thesis, University of California, Davis, Graduate Group in
  Applied Mathematics, May 2017, available from
  \url{https://search.proquest.com/docview/1950269648}.

\end{thebibliography}

\clearpage
\appendix

\section{Auxiliary result}  
\label{appendix:omitted}
  
In the proof of \autoref{lemma:lipschitz-equiv-perturbation}, we need the
following elementary geometric estimate. 
\begin{lemma}
\label{lemma:affine-function-min-value}
Let $F \subset [0,1]^2$ be a convex polygon with vertex set $\verts(F)$, and let $g\colon F \to \R$ be an affine linear function. Suppose that for each $v \in \verts(F)$, either $g(v) = 0$ or $g(v) \geq m$ for some $m > 0$. Let $S = \{\,x \in F\st g(x) = 0\,\}$, and assume that $S$ is nonempty. Then $g(x) \geq m\,d(x, S)/2$ for any $x \in F$, where $d(x, S)$ denotes the Euclidean distance from $x$ to $S$.
\end{lemma}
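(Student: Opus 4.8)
The plan is to use the affineness of $g$ through a convex-combination decomposition of the evaluation point, together with the bounded diameter of $F$ (which is at most $\sqrt2$ because $F\subseteq[0,1]^2$).

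First I would note that $g\geq 0$ on all of $F$: writing any $x\in F$ as a convex combination $x=\sum_i\lambda_i v_i$ of the vertices $v_i\in\verts(F)$, affineness gives $g(x)=\sum_i\lambda_i g(v_i)$, and each term is nonnegative by hypothesis. I partition the vertices into \emph{zero vertices} (those with $g(v)=0$) and \emph{far vertices} (those with $g(v)\geq m$); there is no third case. Fixing any such representation of $x$ and letting $\lambda=\sum_{i:\,g(v_i)\geq m}\lambda_i$ be the total weight placed on far vertices, the zero vertices contribute nothing and I obtain the lower bound
\begin{equation*}
  g(x)=\sum_{i:\,g(v_i)\geq m}\lambda_i\,g(v_i)\;\geq\; m\lambda .
\end{equation*}

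Next I would bound $d(x,S)$ from above by $2\lambda$. The cases $\lambda=0$ (then $x\in S$) and $\lambda=1$ are immediate from $\operatorname{diam}(F)\leq\sqrt2$, so I assume $0<\lambda<1$ and split the representation as $x=\lambda w+(1-\lambda)z$, where $w$ is the normalized far-vertex part (a point of $F$) and $z$ the normalized zero-vertex part. Since $z$ is a convex combination of zero vertices, affineness and convexity give $g(z)=0$ and $z\in F$, hence $z\in S$. Then $x-z=\lambda(w-z)$, so
\begin{equation*}
  d(x,S)\;\leq\;\|x-z\|\;=\;\lambda\,\|w-z\|\;\leq\;\lambda\,\operatorname{diam}(F)\;\leq\;\sqrt2\,\lambda\;\leq\;2\lambda .
\end{equation*}
Combining the two displays yields $g(x)\geq m\lambda\geq\tfrac{m}{2}\,d(x,S)$, which is the claim.

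The step needing the most care is this distance estimate, and it is also where a naive argument would break: $d(x,S)$ measures distance to the face $S=F\cap\{g=0\}$, not to the line $\{g=0\}$, and a point of $F$ can lie close to that line yet far from $S$ when the nearest point of the line leaves $F$. The decomposition $x=\lambda w+(1-\lambda)z$ sidesteps this by exhibiting an explicit witness $z\in S$ at distance $\lambda\|w-z\|$ from $x$, and the diameter bound $\operatorname{diam}(F)\leq\sqrt2$ then supplies exactly the constant $2$ in the statement (in fact giving the slightly stronger $g(x)\geq m\,d(x,S)/\sqrt2$). The only routine preliminaries are that every point of the polygon is a convex combination of its vertices and the dichotomy handling the degenerate constant-$g$ case, both of which are already subsumed in the weight $\lambda$ above.
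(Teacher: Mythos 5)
Your proof is correct and takes essentially the same route as the paper's: both write $x$ as a convex combination of the vertices of $F$, use affineness to obtain $g(x)\geq m\lambda$ where $\lambda$ is the total weight placed on the vertices with $g(v)\geq m$, and then bound $d(x,S)\leq 2\lambda$ using that the diameter of $[0,1]^2$ is $\sqrt2\leq 2$. The only difference is the choice of witness point in $S$ for the distance estimate --- the paper uses the convex combination $s^*=\sum_v\alpha_v s_v$ of nearest points of $S$ to the individual vertices (invoking closedness and convexity of $S$ plus the triangle inequality), whereas you use the normalized zero-vertex part $z$ directly, which is marginally more self-contained but yields the same bound.
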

\begin{proof}
Let $x \in F$ be arbitrary. We may write
\[ x = \sum_{v \in \verts(F)} \alpha_v v \]
for some $\alpha_v \in [0,1]$ with $\sum_v \alpha_v = 1$. 

Since $S$ is a closed set, for each $v \in \verts(F)$, there exists $s_v \in S$ such that $d(v,S)=d(v, s_v)$. 
Let $s^* = \sum_{v \in \verts(F)} \alpha_v s_v$. We have that $s^* \in S$ since the set $S$ is convex.
Thus,
\begin{align*}
d(x, S) & \leq d(x, s^*) & \text{ (by definition)} \\
& = d(\,\sum_{v} \alpha_v v, \sum_{v} \alpha_v s_v \,)&\\
& \leq  \sum_{v} \alpha_v d(v, s_v) \quad \quad &\text{ (by the triangle inequality)} \\
& =  \sum_{v}  \alpha_v d(v, S).&
\end{align*}
For those $v \in \verts(F)$ with $g(v) = 0$, we have $v \in S$ by definition and thus $d(v,S) = 0$. Therefore,
\[ d(x, S) \leq \sum_{\substack{v \in \verts(F) \\ g(v) \geq m}} \alpha_v d(v, S) \leq 2 \sum_{\substack{v \in \verts(F) \\ g(v) \geq m}} \alpha_v . \]
Using the affine linearity of $g$, it thus follows that
\[ g(x) = \sum_{v \in \verts(F)} \alpha_v g(v) = \sum_{\substack{v \in \verts(F) \\ g(v) = 0}} \alpha_v g(v) + \sum_{\substack{v \in \verts(F) \\ g(v) \geq m}} \alpha_v g(v) \geq \frac{m\,d(x, S)}{2}.\]
\end{proof}

\clearpage



\section{Data of the function $\pi=\sage{kzh\_minimal\_has\_only\_crazy\_perturbation\_1()}$}
\label{s:kzh_minimal_has_only_crazy_perturbation_1}

The following pages provide tables with data of the piecewise
linear function
$\pi=\sage{kzh\_minimal\_has\_only\_crazy\_perturbation\_1()}$ of \autoref{lemma:discontinuous_examples_2}.

\autoref{tab:kzh_minimal_has_only_crazy_perturbation_1} defines the function
by listing the breakpoints $x_i$ and the values and the left and right limits
at the breakpoints. (A version of this table has previously appeared in
\cite{koeppe-zhou:crazy-perturbation}.)

Tables~\ref{tab:kzh_minimal_has_only_crazy_perturbation_1_faces_used_dim_2}
and \ref{tab:kzh_minimal_has_only_crazy_perturbation_1_faces_used_dim_1} list
the faces $F = F(I, J, K)$ of the complex~$\Delta\P$ 
that we use for proving piecewise linearity of $\bar\pi$ outside of the special intervals, i.e., Claim~(o) in the proof of \autoref{lemma:discontinuous_examples_2}.
In all tables, the faces are listed by lexicographically increasing triples
$(I, J, K)$; and of the two equivalent faces $F(I, J, K)$ and $F(J, I, K)$, we
only show the lexicographically smaller one.

\autoref{tab:kzh_minimal_has_only_crazy_perturbation_1_faces_of_vertices_used}
shows a list of faces~$F$ that satisfy $\Delta\bar\pi_F(x,y)=0$ for all
$(x, y) \in \relint(F)$.  This property can be verified by inspecting the
provided list of vertices of each face.  A selection of one vertex $(u, v)$
for each listed face~$F$, listed first in the table, suffices to form a
full-rank homogeneous 
linear system of equations $\Delta\bar\pi_F(u, v) = 0$.  We obtained the
selection of faces and their vertices by Gaussian elimination.
The full-rank system, shown in \autoref{tab:kzh_minimal_has_only_crazy_perturbation_finite_system}, proves that $\bar\pi$ is~0 outside of the special intervals, Claim~(\ref{li:bar-pi-zero-outside-special}) in the proof of \autoref{lemma:discontinuous_examples_2}.

Finally, Tables~\ref{tab:kzh_minimal_has_only_crazy_perturbation_1_delta_pi_dim_1} and
\ref{tab:kzh_minimal_has_only_crazy_perturbation_1_delta_pi_dim_2} list the
faces $F$ whose projections $p_i(\relint(F))$, $i=1,2,3$, overlap with the
special intervals ($n_F>0$). 
They are relevant for verifying Claims~(\ref{li:bar-pi-constant-on-cosets-on-special}), (\ref{li:bar-pi-additivities-on-special-at-z=l+u--pre}), and~(\ref{claim:n_F}) in the proof
of~\autoref{lemma:discontinuous_examples_2}.  
For each face $F$, we list the
values of the subadditivity slack $\Delta\pi_F(u, v)$ for all vertices $(u,v)$ of $F$ in nondecreasing
order from left to right.  
If there is an enclosing face $F'\supset F$ with $\Delta\pi_{F'}(u, v) =
\Delta\pi_F(u,v)$ for all vertices $(u,v)$ of $F$ because of one-sided continuity, then we
suppress $F$ in the table.  
All numbers have been rounded to 3 decimals for
presentation.  Claims~(\ref{li:bar-pi-constant-on-cosets-on-special}) and (\ref{li:bar-pi-additivities-on-special-at-z=l+u--pre}) 
use faces with $\Delta\pi_F(x,y) = 0$ for $(x, y)\in F$. 
To verify Claim~(\ref{li:bar-pi-abs-leq-s-on-special})
, note that if $\Delta\pi_F(u,v)=0$ 
for one vertex of~$F$, then $\Delta\pi_F(u,v)=0$ for all vertices of~$F$.
Next, note that for all other faces~$F$ with $n_F>0$, the inequality
$\Delta\pi_F(u,v) \geq n_F\cdot s$ (where $s \approx 0.001$) is satisfied and
tight for at most one vertex $(u, v)$ of each face. These vertices are marked
by the word ``(tight)'' 
in the tables; we have $n_F=1$ for each of these
faces.  All remaining subadditivity slacks $\Delta\pi_F(u,v)$ for vertices
$(u, v)\in\verts(F)$ exceed $0.003 \geq 3\cdot s$.

\clearpage
\setlength{\LTcapwidth}{0.8\textheight}

\DIFFPROTECT{
\iftrue
\newcommand\piximinus{\pi(x_i^-) = \pi_{[x_{i-1}, x_i]}(x_i)}
\newcommand\pixiplus{\pi(x_i^+)  = \pi_{[x_i, x_{i+1}]}(x_i)}
\begin{landscape}
\begin{table}[p]
  \caption{The piecewise linear function
    $\pi = \sage{kzh\_minimal\_has\_only\_crazy\_perturbation\_1}()$, defined by its
    values and limits at the breakpoints.  If a limit is omitted, it equals
    the value.}
  \label{tab:kzh_minimal_has_only_crazy_perturbation_1}
  \centering\small
  \def\arraystretch{1.33}
$\begin{array}{@{\quad}c@{\qquad\qquad}*5c}
  \toprule
  i & x_i & \piximinus & \pi(x_i) & \pixiplus & \text{slope}\\
  \midrule
  0 & 0 & \frac{101}{650} & 0 & \frac{101}{650} & \smash{\raisebox{-1.5ex}{$c_3 = -5$}}\\
  1 & \frac{101}{5000} & \frac{707}{13000} & \frac{2727}{13000} & \frac{707}{13000} & \smash{\raisebox{-1.5ex}{$c_1 = \frac{35}{13}$}}\\
  2 & \frac{60153}{369200} &  & \frac{421071}{959920} &  & \smash{\raisebox{-1.5ex}{$c_3 = -5$}}\\
  3 & \frac{849}{5000} & \frac{4851099}{11999000} & -\frac{1925}{71994} \sqrt{2} + \frac{4851099}{11999000} & \frac{4851099}{11999000} & \smash{\raisebox{-1.5ex}{$c_1 = \frac{35}{13}$}}\\
  4 & \frac{1925}{298129} \sqrt{2} + \frac{849}{5000} &  & \frac{67375}{3875677} \sqrt{2} + \frac{4851099}{11999000} &  & \smash{\raisebox{-1.5ex}{$c_3 = -5$}}\\
  5 & \frac{77}{7752} \sqrt{2} + \frac{849}{5000} & \frac{385}{93016248} \sqrt{2} + \frac{4851099}{11999000} & \frac{2695}{100776} \sqrt{2} + \frac{4851099}{11999000} & \frac{385}{93016248} \sqrt{2} + \frac{4851099}{11999000} & \smash{\raisebox{-1.5ex}{$c_1 = \frac{35}{13}$}}\\
  6 & \llap{$a_0 = {}$}\frac{19}{100} & -\frac{1925}{71994} \sqrt{2} + \frac{275183}{599950} & \frac{18196}{59995} & -\frac{1925}{71994} \sqrt{2} + \frac{275183}{599950} & \smash{\raisebox{-1.5ex}{$c_1 = \frac{35}{13}$}}\\
  7 & \frac{77}{22152} \sqrt{2} + \frac{281986521}{1490645000} &  & -\frac{385}{22152} \sqrt{2} + \frac{10467633}{22933000} &  & \smash{\raisebox{-1.5ex}{$c_3 = -5$}}\\
  8 & \frac{40294}{201875} & \frac{848837}{2099500} & \frac{795836841}{1937838500} & \frac{848837}{2099500} & \smash{\raisebox{-1.5ex}{$c_1 = \frac{35}{13}$}}\\
  9 & \frac{36999}{184600} &  & \frac{975607}{2399800} &  & \smash{\raisebox{-1.5ex}{$c_3 = -5$}}\\
  10 & \llap{$a_1 = {}$}\frac{77}{7752} \sqrt{2} + \frac{19}{100} & -\frac{385}{7752} \sqrt{2} + \frac{275183}{599950} & \frac{385}{93016248} \sqrt{2} + \frac{18196}{59995} & -\frac{385}{7752} \sqrt{2} + \frac{275183}{599950} & \smash{\raisebox{-1.5ex}{$c_3 = -5$}}\\
  11 & \frac{1051}{5000} & \frac{4291761}{11999000} & -\frac{1925}{71994} \sqrt{2} + \frac{4291761}{11999000} & \frac{4291761}{11999000} & \smash{\raisebox{-1.5ex}{$c_1 = \frac{35}{13}$}}\\
  12 & \frac{1925}{298129} \sqrt{2} + \frac{1051}{5000} &  & \frac{67375}{3875677} \sqrt{2} + \frac{4291761}{11999000} &  & \smash{\raisebox{-1.5ex}{$c_3 = -5$}}\\
  13 & \llap{$a_2 = {}$}\frac{14199}{64600} & \frac{192500}{3875677} \sqrt{2} + \frac{240046061}{775135400} & \frac{50943}{167960} & \frac{192500}{3875677} \sqrt{2} + \frac{240046061}{775135400} & \smash{\raisebox{-1.5ex}{$c_3 = -5$}}\\
  14 & \frac{77}{7752} \sqrt{2} + \frac{1051}{5000} & \frac{385}{93016248} \sqrt{2} + \frac{4291761}{11999000} & \frac{2695}{100776} \sqrt{2} + \frac{4291761}{11999000} & \frac{385}{93016248} \sqrt{2} + \frac{4291761}{11999000} & \smash{\raisebox{-1.5ex}{$c_1 = \frac{35}{13}$}}\\
  15 & \frac{77}{22152} \sqrt{2} + \frac{342208579}{1490645000} &  & -\frac{385}{22152} \sqrt{2} + \frac{122181831}{298129000} &  & \smash{\raisebox{-1.5ex}{$c_3 = -5$}}\\
  16 & \frac{193799}{807500} &  & \frac{187742}{524875} &  & \smash{\raisebox{-1.5ex}{$c_1 = \frac{35}{13}$}}\\
  17 & \llap{$l = A = {}$}\frac{219}{800} &  & \frac{933}{2080} & \frac{51443}{147680} & \smash{\raisebox{-1.5ex}{$c_2 = \frac{5}{11999}$}}\\
  18 & \llap{$u = A_0 = {}$}\frac{269}{800} & \frac{668809}{1919840} & \frac{683}{2080} &  & \smash{\raisebox{-1.5ex}{$c_1 = \frac{35}{13}$}}\\
  19 & \llap{$f - u = {}$}\frac{371}{800} &  & \frac{1397}{2080} & \frac{1251031}{1919840} & \smash{\raisebox{-1.5ex}{$c_2 = \frac{5}{11999}$}}\\
  20 & \llap{$f - l = {}$}\frac{421}{800} & \frac{96237}{147680} &
                                                                   \frac{1147}{2080} &  & \smash{\raisebox{-1.5ex}{$c_1 = \frac{35}{13}$}}\\
\end{array}$
\end{table}
\begin{table}[p]
  \centering\small
  \def\arraystretch{1.33}
$\begin{array}{@{\quad}c@{\qquad\qquad}*5c}
  &&&&  & \smash{\raisebox{-1.5ex}{$c_1 = \frac{35}{13}$}}\\
  21 & \frac{452201}{807500} &  & \frac{337133}{524875} &  & \smash{\raisebox{-1.5ex}{$c_3 = -5$}}\\
  22 & -\frac{77}{22152} \sqrt{2} + \frac{850307421}{1490645000} &  & \frac{385}{22152} \sqrt{2} + \frac{175947169}{298129000} &  & \smash{\raisebox{-1.5ex}{$c_1 = \frac{35}{13}$}}\\
  23 & -\frac{77}{7752} \sqrt{2} + \frac{2949}{5000} & -\frac{385}{93016248} \sqrt{2} + \frac{7707239}{11999000} & -\frac{2695}{100776} \sqrt{2} + \frac{7707239}{11999000} & -\frac{385}{93016248} \sqrt{2} + \frac{7707239}{11999000} & \smash{\raisebox{-1.5ex}{$c_3 = -5$}}\\
  24 & \llap{$f - a_2 = {}$}\frac{37481}{64600} & -\frac{192500}{3875677} \sqrt{2} + \frac{535089339}{775135400} & \frac{117017}{167960} & -\frac{192500}{3875677} \sqrt{2} + \frac{535089339}{775135400} & \smash{\raisebox{-1.5ex}{$c_3 = -5$}}\\
  25 & -\frac{1925}{298129} \sqrt{2} + \frac{2949}{5000} &  & -\frac{67375}{3875677} \sqrt{2} + \frac{7707239}{11999000} &  & \smash{\raisebox{-1.5ex}{$c_1 = \frac{35}{13}$}}\\
  26 & \frac{2949}{5000} & \frac{7707239}{11999000} & \frac{1925}{71994} \sqrt{2} + \frac{7707239}{11999000} & \frac{7707239}{11999000} & \smash{\raisebox{-1.5ex}{$c_3 = -5$}}\\
  27 & \llap{$f - a_1 = {}$}{-}\frac{77}{7752} \sqrt{2} + \frac{61}{100} & \frac{385}{7752} \sqrt{2} + \frac{324767}{599950} & -\frac{385}{93016248} \sqrt{2} + \frac{41799}{59995} & \frac{385}{7752} \sqrt{2} + \frac{324767}{599950} & \smash{\raisebox{-1.5ex}{$c_3 = -5$}}\\
  28 & \frac{110681}{184600} &  & \frac{1424193}{2399800} &  & \smash{\raisebox{-1.5ex}{$c_1 = \frac{35}{13}$}}\\
  29 & \frac{121206}{201875} & \frac{1250663}{2099500} & \frac{1142001659}{1937838500} & \frac{1250663}{2099500} & \smash{\raisebox{-1.5ex}{$c_3 = -5$}}\\
  30 & -\frac{77}{22152} \sqrt{2} + \frac{910529479}{1490645000} &  & \frac{385}{22152} \sqrt{2} + \frac{12465367}{22933000} &  & \smash{\raisebox{-1.5ex}{$c_1 = \frac{35}{13}$}}\\
  31 & \llap{$f - a_0 = l+u = {}$} \frac{61}{100} & \frac{1925}{71994} \sqrt{2} + \frac{324767}{599950} & \frac{41799}{59995} & \frac{1925}{71994} \sqrt{2} + \frac{324767}{599950} & \smash{\raisebox{-1.5ex}{$c_1 = \frac{35}{13}$}}\\
  32 & -\frac{77}{7752} \sqrt{2} + \frac{3151}{5000} & -\frac{385}{93016248} \sqrt{2} + \frac{7147901}{11999000} & -\frac{2695}{100776} \sqrt{2} + \frac{7147901}{11999000} & -\frac{385}{93016248} \sqrt{2} + \frac{7147901}{11999000} & \smash{\raisebox{-1.5ex}{$c_3 = -5$}}\\
  33 & -\frac{1925}{298129} \sqrt{2} + \frac{3151}{5000} &  & -\frac{67375}{3875677} \sqrt{2} + \frac{7147901}{11999000} &  & \smash{\raisebox{-1.5ex}{$c_1 = \frac{35}{13}$}}\\
  34 & \frac{3151}{5000} & \frac{7147901}{11999000} & \frac{1925}{71994} \sqrt{2} + \frac{7147901}{11999000} & \frac{7147901}{11999000} & \smash{\raisebox{-1.5ex}{$c_3 = -5$}}\\
  35 & \frac{235207}{369200} &  & \frac{538849}{959920} &  & \smash{\raisebox{-1.5ex}{$c_1 = \frac{35}{13}$}}\\
  36 & \frac{3899}{5000} & \frac{12293}{13000} & \frac{10273}{13000} & \frac{12293}{13000} & \smash{\raisebox{-1.5ex}{$c_3 = -5$}}\\
  37 & \llap{$f = {}$}\frac{4}{5} & \frac{549}{650} & 1 & \frac{549}{650} & \smash{\raisebox{-1.5ex}{$c_1 = \frac{35}{13}$}}\\
  38 & \frac{4101}{5000} & \frac{899}{1000} & \frac{9667}{13000} & \frac{899}{1000} & \smash{\raisebox{-1.5ex}{$c_3 = -5$}}\\
  39 & \frac{4899}{5000} & \frac{101}{1000} & \frac{3333}{13000} & \frac{101}{1000} & \smash{\raisebox{-1.5ex}{$c_1 = \frac{35}{13}$}}\\
  40 & 1 & \frac{101}{650} & 0 & \frac{101}{650} & \smash{\raisebox{-1.5ex}{$$}}\\
   \midrule
  i & x_i & \piximinus & \pi(x_i) & \pixiplus & \text{slope}\\
  \bottomrule
\end{array}$
\end{table}
\end{landscape}

\else
\begin{table}[tp]
  \caption{The piecewise linear function
    $\pi = \sage{kzh\_minimal\_has\_only\_crazy\_perturbation\_1()}$, defined by its
    values and limits at the breakpoints.  If a limit is omitted, it equals
    the value.}
  \label{tab:kzh_minimal_has_only_crazy_perturbation_1}
  \centering\small
  \def\arraystretch{1.17}
  \hspace*{-2cm}$\input{tab_kzh_minimal_has_only_crazy_perturbation_1.tex}$
\end{table}
\fi
}

\begin{landscape}
  \centering\small
\providecommand\compactop[1]{\kern0.2pt{#1}\kern0.2pt\relax}
\providecommand\specialinterval[1]{\hphantom*{#1}\text{*}}
\providecommand\tightslack[1]{\llap{$\triangleright$\,}{#1}}
\def\arraystretch{1.17}

  \clearpage
\end{landscape}

\end{document}